\documentclass[11pt]{article}
\usepackage{amsmath,amssymb,amsthm}
\DeclareMathOperator{\esssup}{ess\,sup}
\usepackage{graphicx}
\usepackage[numbers]{natbib}
\usepackage[margin=1in]{geometry}
\usepackage[colorlinks=true, citecolor=blue, linkcolor=red]{hyperref}
\usepackage{titlesec}
\usepackage[T1]{fontenc}
\usepackage{newtxtext, newtxmath}  
\numberwithin{equation}{section}
\titleformat{\section}
  {\normalfont\Large\bfseries\MakeUppercase}{\thesection}{1em}{| }

\titleformat{\subsection}
  {\normalfont\large\bfseries}{\thesubsection}{1em}{| }

\titleformat{name=\section,numberless}
  {\normalfont\Large\bfseries}{}{0pt}{}

\makeatletter
\newtheoremstyle{dotstyle}{3pt}{3pt}{\normalfont}{}{\bfseries}{.}{ }{\thmname{#1}\thmnumber{ #2}\thmnote{ (#3)}}
\theoremstyle{dotstyle}
\newtheorem{theorem}{Theorem}[section]
\newtheorem{lemma}{Lemma}[section]
\newtheorem{proposition}{Proposition}[section]
\newtheorem{definition}{Definition}[section]
\newtheorem{remark}{Remark}[section]
\newtheorem{corollary}{Corollary}[section]
\newtheorem{example}{Example}[section] 
\renewenvironment{proof}[1][\proofname]{%
  \par\pushQED{\qed}%
  \topsep6\p@\@plus6\p@\relax
  \trivlist
  \item[\hskip\labelsep\normalfont\bfseries #1\@addpunct{.}]%
  \ignorespaces
}{%
  \popQED\endtrivlist\@endpefalse
}
\makeatother

\usepackage{etoolbox}   
\appto\appendix{
  \titleformat{\section}                         
    {\normalfont\Large\bfseries\MakeUppercase}  
    {APPENDIX~\thesection:}{1em}{}              
  \titleformat{name=\section,numberless}         
    {\normalfont\Large\bfseries}{}{0pt}{}
}

\title{\bfseries A Functional-Analytic Framework for Nonlinear Adaptive Memory: Hierarchical Kernels, State-Dependent Sensitivity, and Memory-Dependent Functionals}
\author{
  Jiahao Jiang\thanks{Correspondence to: Jiahao Jiang, School of Mathematics, Southwest Jiaotong University. Email: \href{mailto:jiahao.jiang@swjtu.edu.cn}{jiahao.jiang@swjtu.edu.cn}} \\
  \textit{School of Mathematics, Southwest Jiaotong University, Chengdu 610031, China}
}
\date{}

\begin{document}

\maketitle

\begin{center}
\large\bfseries Abstract
\end{center}
\vspace{10pt}
This work develops a systematic functional-analytic framework for nonlinear adaptive memory, where the influence of past events depends on both elapsed time and the state values along a trajectory. The framework comprises three hierarchical layers. First, memory kernels are classified into mathematically admissible, regular (uniformly bounded, normalized, Lipschitz), and generalized (bounded variation, possibly sign-changing) classes. Second, adaptive sensitivity functions $\Lambda(s,f(s))$ are introduced, satisfying natural conditions; a concrete construction based on historical deviation accumulation interpolates continuously between instantaneous response and history-dependent sensitivity, with an explicit Lipschitz estimate $\|\Lambda_f-\Lambda_g\|_\infty\le L_\Lambda\|f-g\|_\infty$. Third, an adaptive memory-dependent functional $S_{\kappa,\Lambda}(f)=\sup_{t\in I}\bigl(|f(t)|+\int_0^t\Lambda(s,f(s))\kappa(t-s)|f(s)|ds\bigr)$ and the associated set $\mathscr{M}_{\kappa,\Lambda}(I)=\{f:S_{\kappa,\Lambda}(f)<\infty\}$ are constructed.

Fundamental properties of the framework are established, including absolute convergence, measurability, uniform boundedness, positive definiteness, and comparison with the classical supremum norm. It is shown that $\mathcal{C}(I)\subset\mathscr{M}_{\kappa,\Lambda}(I)$ strictly, with discontinuous functions (e.g., indicator functions of subintervals) belonging to the set—capturing abrupt signal changes such as on-off switching in nonlinear systems. When the maximum of $|f|$ is attained in the interior of the interval, a strict inequality $S_{\kappa,\Lambda}(f)>\|f\|_\infty$ is proved, demonstrating the nontrivial contribution of the memory component.

The resulting framework provides a unified mathematical language for describing nonlinear adaptive phenomena—including habituation, state-dependent weighting, and selective retention—within a rigorous functional-analytic setting. By separating temporal weighting from state-dependent modulation, the construction offers a modular methodology applicable across neuroscience, adaptive control, and machine learning, where memory is both time-dependent and content-sensitive.

\vspace{10pt}
\noindent \textbf{Keywords:} Adaptive memory, Hierarchical kernel classification, Adaptive sensitivity function, Adaptive memory-dependent functional, Nonlinear functional analysis

\section{INTRODUCTION}

Memory—the persistence of past events in shaping present behavior—is a pervasive feature across diverse scientific disciplines. In physical systems, heat conduction with memory effects modifies the classical diffusion paradigm, leading to anomalous large-time asymptotics that depend critically on the space-time scale under consideration \cite{cortazar2021heat,cortazar2024asymptotic}. In material science, nonclassical diffusion equations with memory terms on time-dependent spaces exhibit complex dynamic behavior, including the existence of global attractors under general assumptions on the memory kernel \cite{wangattractor}. In structural mechanics, Timoshenko systems incorporating memory terms display a dissipative structure whose spectral analysis yields optimal decay estimates and global existence results in critical Sobolev spaces \cite{mori2018dissipative}. Extensions to thermoelastic systems with time-varying delays further reveal global existence, asymptotic behavior, and uniform attractors \cite{qin2020global}. In stochastic analysis, semilinear Volterra equations with multiplicative noise and non-analytic kernels exhibit parabolic character, with well-posedness and regularity established under Lipschitz-type conditions \cite{baeumer2015existence}. A related class of nonlinear Volterra–Fredholm integro-differential equations has been addressed using evolutionary computational intelligence techniques, where artificial neural networks provide accurate modeling of the integral structure \cite{kashkaria2017evolutionary}. In optimal control, a posteriori error estimates for finite-element discretizations of nonlinear parabolic integro-differential control problems enable adaptive multi-mesh schemes \cite{lu2014posteriori}.

Beyond deterministic and stochastic evolution equations, memory concepts appear prominently in neural network theory. Neutral-type hybrid bidirectional associative memory networks with time-varying delays have been analyzed for robust stability, yielding delay-derivative-dependent conditions for equilibrium \cite{feng2014robust}. Hybrid fractional differential equations combined with artificial neural networks have been employed to model tuberculosis transmission dynamics, demonstrating the practical utility of memory-based modeling in epidemiology \cite{manu2025mathematical}. In time series analysis, spectral methods for multifractional long-range dependent functional time series characterize memory through operator-valued spectral densities, with weak-consistent estimation of long-memory operators under Gaussian assumptions \cite{ruiz2022spectral}. In social dynamics, memory-based reduced modeling of opinion spreading, grounded in the Mori–Zwanzig projection formalism, shows that inclusion of memory terms significantly improves prediction quality across network topologies \cite{wulkow2021memory}. Nonstationary intermittent dynamical systems exhibit polynomial rates of memory loss under suitable conditions on return time tails, with applications to random compositions of Pomeau–Manneville maps \cite{korepanov2025improved}. In fluid mechanics, incompressible flows achieve exponential self-similar mixing of passive tracers, a phenomenon intimately connected to the decay of memory in transported quantities \cite{alberti2019exponential}. In geometry, norms on the cohomology of hyperbolic 3-manifolds relate purely topological Thurston norms to more geometric harmonic norms, with explicit constants depending on volume and injectivity radius \cite{brock2017norms}.

Foundational to many of these analyses is the decomposition of functions or operators into components of definite sign or monotonicity. Classical Jordan decomposition theorems for signed measures, functions of bounded variation, and operators in Banach spaces provide essential tools for handling memory kernels that may change sign or exhibit irregular behavior \cite{schmidt1982general,leonov1996total,chistyakov2010maps,ene1998decomposition,kantorovitz1965jordan}. In parallel, inequalities for continuous Archimedean t-norms generalize classical Minkowski-type estimates and offer a functional framework relevant to the analysis of nonlinear integral operators \cite{saminger2008generalization}. These ideas extend to fuzzy settings, where interval-valued fuzzy structures with t-norms provide computational tools for defuzzification in fuzzy control \cite{davvaz2009applications}. In approximation theory, artificial neural networks with uniform-norm-based loss functions offer alternative training paradigms when data are limited or class sizes are imbalanced \cite{peiris2024artificial}.

Adaptive algorithms have been extensively developed for solving inverse problems and fixed-point problems. Adaptive Gauss–Newton methods for nonlinear equations use residual bounds and quadratic regularization to achieve global convergence without prior knowledge of hyperparameters \cite{yudin2021adaptive}. Split common fixed-point problems for demicontractive operators have been solved using adaptive algorithms that do not require operator norms, with strong convergence established under suitable conditions \cite{kitkuan2019adaptive}. Inertial self-adaptive algorithms with two different inertial factors approximate minimum-norm solutions of split feasibility problems in Banach spaces, employing step-size selection without prior norm knowledge \cite{sunthrayuth2025inertial}. Multiple-sets split equality problems have been addressed with iterative algorithms whose split self-adaptive step sizes are computed directly from the iterative procedure \cite{tian2016iterative}. Split fixed-point problems for multi-valued demi-contractive mappings admit self-adaptive algorithms with strong convergence guarantees \cite{jailoka2021split}. Limited-memory bundle methods for difference-of-convex optimization enable sparse pairwise kernel learning with non-smooth loss functions, balancing prediction accuracy with sparsity requirements \cite{karmitsa2025limited}. Maximum-norm a posteriori error bounds for extrapolated upwind schemes applied to singularly perturbed convection-diffusion problems provide robust estimators that guide adaptive mesh generation \cite{linss2024maximum}. Memory-efficient combinatorial attacks on small LWE keys address the challenge of limited memory in cryptographic contexts, outperforming previous approaches when memory is constrained \cite{esser2024memory}. Physics-based active learning strategies iteratively sample training points based on physical query rules, constructing surrogate models that are accurate in industrially relevant regions of the parameter space \cite{torregrosa2024physics}. Uniform convergence of multigrid methods for elliptic quasi-variational inequalities has been established in the maximum norm, with applications to impulse control problems \cite{belouafi2023uniform}. Weak-duality-based adaptive finite element methods for PDE-constrained optimization with pointwise gradient state constraints derive residual-based a posteriori error estimators without requiring constraint qualifications \cite{hintermuller2012weak}. Contextual inverse multiobjective optimization recovers multiple objective functions and preferences from observed context-decision pairs, accommodating different norm-based scalarizations \cite{blanquero2025contextual}. Inverse moment bounds for sample autocovariance matrices based on detrended time series provide mean-square error bounds for finite predictor coefficients under short- or long-memory dependence \cite{cheng2015inverse}. Probabilistic adaptive widths of multivariate Sobolev spaces equipped with Gaussian measures determine asymptotic values for approximation in $L^q$ norms \cite{guanggui2006probabilistic}. Extensions of radial basis functions address large-scale learning problems where different variables play distinct roles, with applications in artificial intelligence \cite{girosi1992some}. Bio-inspired neural network architectures modeled on the cerebellum offer avenues for embodied intelligence, where adaptive neural networks learn through physical interaction \cite{nurutdinov2024bio}. Theories of attribute implications in multi-adjoint concept lattices with hedges provide logical foundations for knowledge representation and decision-support systems \cite{cornejo2026theories}. The relationship between large deviation rate functions and Kullback–Leibler divergence informs the interpretation of neural estimation of mutual information \cite{tsuda2026some}. Integration of privacy-enhancing technologies into explainable artificial intelligence mitigates attribute inference attacks on feature-based explanations, reducing attack success while preserving model utility \cite{allana2025towards}. Computationally enhanced projection methods for symmetric Sylvester and Lyapunov matrix equations compute residual norms at reduced cost, making Krylov strategies competitive with more recent approaches \cite{palitta2018computationally}. Perturbation analysis of singular values in concatenated matrices extends classical Weyl inequalities, providing stability bounds for low-rank approximations with applications in signal processing and data-driven modeling \cite{shamrai2025analysis}. Generalizations of fractional calculus have been systematically investigated to study anomalous stochastic processes, leading to fractional Itô calculus and generalized Fokker–Planck equations that describe underdamped and overdamped stochastic dynamics \cite{jiang2025study}. Adaptive frequency evolution decomposition combined with improved fluctuation-based dispersion entropy has been proposed for muscle fatigue characterization using surface electromyography signals, demonstrating the utility of adaptive signal decomposition in biomedical applications \cite{dong2026adaptive}. A mathematical theory of adaptive memory has been developed for stochastic processes whose local regularity adapts dynamically in response to their own state, introducing time-varying and responsive fractional Brownian motion with rigorous analysis of covariance structure, pathwise regularity, and attention-like mechanisms \cite{jiang2025towards}. A two-parameter memory-weighted velocity operator has been introduced and analyzed, providing a framework for describing rates of change in systems with time-varying, power-law memory, with weighted pointwise estimates revealing compensation mechanisms between independent memory weightings \cite{jiang2026foundations}.

The present work builds upon this broad landscape of memory-related models and adaptive algorithms. A systematic functional-analytic framework for adaptive memory is introduced, organized into three hierarchical layers. First, a classification of memory kernels is developed in Section~\ref{sec:kernel_theory}: starting from mathematically admissible kernels satisfying minimal integrability and measurability conditions (Definition~\ref{def:math_admissible_kernel}), regular admissible kernels are introduced with uniform boundedness, normalization, and Lipschitz continuity (Definition~\ref{def:regular_admissible_kernel}); generalized admissible kernels further relax these requirements to allow sign changes, bounded variation, and non-unit total weight (Definition~\ref{def:general_admissible_kernel}). Stationary memory kernels of the form $K_\kappa(t,s)=\kappa(t-s)$ are adopted as a foundational premise (Section~\ref{subsec:stationary_kernel}), and prototypical examples—exponential kernel (Example~\ref{ex:exponential_kernel}), power-law kernel (Example~\ref{ex:power_law_kernel_generalized}), and finite-memory kernel (Example~\ref{ex:finite_memory_kernel})—illustrate the scope of the classification. Essential integral estimates for regular and generalized kernels are established in Lemma~\ref{lem:kernel_integral_estimates} and Proposition~\ref{prop:generalized_estimates}.

Second, adaptive sensitivity functions $\Lambda: I\times\mathbb{R}\to[0,\infty)$ are introduced in Section~\ref{sec:adaptive_sensitivity_framework} (Definition~\ref{def:adaptive_sensitivity}), satisfying the conditions of uniform boundedness, Lipschitz continuity in the state variable, measurability in time, and positivity at the zero state. These functions modulate the memory weight according to the state value $f(s)$ along the trajectory, yielding a composite weight $\Lambda(s,f(s))\kappa(t-s)$. A concrete construction based on historical deviation accumulation (Example~\ref{ex:historical_deviation_adaptive}) demonstrates that the framework accommodates genuinely history-dependent sensitivity, interpolating continuously between purely instantaneous response ($\beta_0=0$) and operator-level historical feedback ($\beta_0>0$). Theorem~\ref{thm:lambda_f_properties} establishes fundamental properties of the resulting function $\Lambda_f$: uniform boundedness (P1), a Lipschitz-type estimate in the supremum norm $\|\Lambda_f-\Lambda_g\|_\infty\le L_\Lambda\|f-g\|_\infty$ (P2), continuity in time (P3), and positivity at zero (P4). Corollary~\ref{cor:beta_zero_case} verifies that the purely instantaneous case reduces to a function $\Lambda\in\mathscr{A}(I)$.

Third, the adaptive memory-dependent functional $S_{\kappa,\Lambda}(f)$ is constructed in Section~\ref{sec:adaptive_memory_sets} via a supremum operation (Definition~\ref{def:adaptive_memory_functional}), integrating the instantaneous magnitude $|f(t)|$ with the adaptively weighted historical accumulation $\mathcal{J}_f(t)$ (Definition~\ref{def:adaptive_basic_functions}). Lemma~\ref{lem:functions_well_posed} establishes absolute convergence, measurability, and uniform boundedness of the auxiliary functions $\mathcal{J}_f$ and $\mathcal{M}_f$. Lemma~\ref{lem:adaptive_functional_properties} verifies existence, controlled boundedness, positive definiteness, and comparison with the classical supremum norm for $S_{\kappa,\Lambda}(f)$. Theorem~\ref{thm:strict_functional_comparison} shows that when the maximum of $|f|$ is attained in the interior of the interval, the inequality is strict: $S_{\kappa,\Lambda}(f) > \|f\|_\infty$. The associated function set $\mathscr{M}_{\kappa,\Lambda}(I) = \{ f : S_{\kappa,\Lambda}(f) < \infty \}$ is introduced in Definition~\ref{def:adaptive_memory_set}. Theorem~\ref{thm:embedding_C_into_M} proves that $\mathcal{C}(I) \subset \mathscr{M}_{\kappa,\Lambda}(I)$ and establishes the two-sided estimate $\|f\|_\infty \le S_{\kappa,\Lambda}(f) \le (1+\Lambda_\infty\kappa_\infty T)\|f\|_\infty$. Proposition~\ref{prop:inclusion_estimate} shows that the inclusion mapping is linear, bounded, and Lipschitz continuous. Proposition~\ref{prop:discontinuous_bounded_inclusion} demonstrates that the set also contains certain discontinuous functions, such as the indicator function of a subinterval, reflecting the framework's flexibility to accommodate signals with jump discontinuities arising in nonlinear phenomena like abrupt environmental changes or on-off switching.

The structure of the paper is as follows. Section~\ref{sec:kernel_theory} develops the hierarchical classification of memory kernels, introducing mathematically admissible, regular, and generalized kernels, together with stationary representations, prototypical examples, and essential integral estimates. Section~\ref{sec:adaptive_sensitivity_framework} introduces adaptive sensitivity functions, establishes their axiomatic properties, and presents a concrete construction based on historical deviation accumulation, including detailed verification of uniform boundedness, Lipschitz-type estimates, continuity, and positivity at zero. Section~\ref{sec:adaptive_memory_sets} constructs the adaptive memory-dependent functional $S_{\kappa,\Lambda}(f)$ and the associated function set $\mathscr{M}_{\kappa,\Lambda}(I)$, proving well-posedness, fundamental properties, and embedding results, including strict comparison when the maximizer lies in the interior of the interval. 

\section{A Hierarchical Framework for Memory Kernels: Concepts and Estimates}\label{sec:kernel_theory}

The purpose of this section is to introduce a self-consistent axiomatic framework for describing how past states are weighted in their influence on the present. The central object is the \emph{memory kernel}, a function that quantifies the relative importance of a historical instant $s$ when examining the system at a later time $t$. Kernels of this type appear across a broad spectrum of mathematical models: in heat equations with memory \cite{cortazar2021heat, cortazar2024asymptotic}, Timoshenko systems of memory type \cite{mori2018dissipative}, nonclassical diffusion equations with memory \cite{wangattractor}, Volterra equations and Volterra--Fredholm integro-differential equations \cite{baeumer2015existence, kashkaria2017evolutionary}, a posteriori error estimates for finite-element discretizations of parabolic integro-differential optimal control problems \cite{lu2014posteriori}, neural networks with time-varying delays \cite{feng2014robust}, hybrid fractional differential equations \cite{manu2025mathematical}, long-range dependent time series \cite{ruiz2022spectral}, the development of a mathematical theory for adaptive memory \cite{jiang2025towards}, and the study of a two-parameter memory-weighted velocity operator \cite{jiang2026foundations}.

A key challenge that emerges from these diverse applications is that memory is rarely a static, time-invariant weighting of the past; instead, it often exhibits intrinsically \emph{nonlinear} characteristics. Systems may \emph{adapt} their responsiveness based on the intensity or frequency of past stimuli (a phenomenon known as habituation), or they may assign different weights to past events depending on the state values encountered along the trajectory—what we term \emph{state-dependent weighting}. The axiomatic framework developed below addresses this class of phenomena by constructing a hierarchical classification of kernels that, in Section~\ref{sec:adaptive_sensitivity_framework} and Section~\ref{sec:adaptive_memory_sets}, will be combined with state-dependent sensitivity functions to yield a theory of adaptive memory that accommodates such nonlinear features. This framework aims to capture the essential mathematical structures shared by the diverse models cited above while remaining entirely self-contained. In doing so, it provides a unified language for describing memory phenomena that are inherently nonlinear—adaptation, habituation, state-dependent weighting—within a rigorous functional-analytic setting.

We proceed by first imposing basic mathematical requirements that guarantee the well-definedness of the relevant expressions (see Definition~\ref{def:math_admissible_kernel}); subsequently, to provide the necessary analytical tools for the development of the functional-analytic theory that follows—in particular, for the construction of adaptive sensitivity functions in Section~\ref{sec:adaptive_sensitivity_framework} (see Definition~\ref{def:adaptive_sensitivity}) and adaptive memory sets in Section~\ref{sec:adaptive_memory_sets} (see Definition~\ref{def:adaptive_memory_functional} and Definition~\ref{def:adaptive_memory_set})—we introduce additional regularity conditions (see Definition~\ref{def:regular_admissible_kernel}) that will be employed throughout the main body of this work.

\subsection{Mathematically Admissible Kernels: Minimal Requirements and Basic Properties}\label{subsec:math_admissible_kernel}

Throughout this work, we fix a finite time horizon $T>0$ and denote by $I:=[0,T]$ the corresponding compact time interval. Memory effects—understood as the dependence of a system's present state on its past history—will be described by functions defined on the triangular region
\begin{equation}\label{eq:triangular_region}
\Omega:=\{(t,s)\in I\times I \mid 0\le s\le t\},
\end{equation}
which reflects the fundamental causal principle that the past may influence the present, but not vice versa.

\begin{definition}[Mathematically admissible kernel]\label{def:math_admissible_kernel}
A function $\kappa:I\to[0,\infty)$ is said to be a \emph{mathematically admissible kernel} if it fulfills the following three elementary requirements:
\begin{enumerate}
\item[(M1)] \textbf{Non-negativity:} $\kappa(\tau)\ge 0$ for every $\tau\in I$;
\item[(M2)] \textbf{Integrability:} $\kappa\in L^1(I)$, i.e.
      \[
      \|\kappa\|_{L^1(I)}:=\int_0^T\kappa(\tau)\,d\tau<\infty;
      \]
\item[(M3)] \textbf{Measurability:} $\kappa$ is Lebesgue measurable on $I$.
\end{enumerate}
The collection of all mathematically admissible kernels will be denoted by $\mathscr{K}_{\mathrm{math}}$.
\end{definition}

\begin{remark}
Conditions (M1)--(M3) constitute the basic hypotheses required to ensure that all integral expressions involving $\kappa$ appearing in this work are mathematically well-defined. Non-negativity guarantees that $\kappa$ admits an interpretation as a weight, while integrability and measurability are standard prerequisites for the Lebesgue integration theory. At this stage, no further regularity---such as continuity or boundedness---is imposed; these will be added later as the theory develops and stronger properties become necessary. From a modeling perspective, non-negativity encodes the natural requirement that past events should not exert negative influence (i.e., no ``inverse memory''), while integrability ensures that the total weight of past experience remains finite—a natural condition for any physically plausible memory mechanism.
\end{remark}

\begin{remark}[Stationarity as a foundational premise]\label{rmk:stationarity_premise}
For the purposes of the present work, we choose to take stationary kernels as the starting point for the theoretical development. Specifically, for any $(t,s)\in\Omega$ we write
\begin{equation}\label{eq:stationary_form}
K_\kappa(t,s):=\kappa(t-s),
\end{equation}
which expresses that the weight attributed to a historical state at time $s$ is determined exclusively by the elapsed time $\tau:=t-s\in[0,T]$. This stationarity assumption is adopted as a foundational premise for the theory; it captures the essential feature that the influence of past events evolves with the passage of time, while providing a clear structure for the subsequent analysis. A formal definition of stationary memory kernels will be given in Section~\ref{subsec:stationary_kernel}, where their role in the overall framework is further elaborated.
\end{remark}

\subsection{Regular Admissible Kernels: Enhanced Regularity Conditions}\label{subsec:regular_admissible_kernel}

The basic conditions (M1)--(M3) introduced in Definition~\ref{def:math_admissible_kernel} guarantee that the integral expressions involving the kernel are mathematically well-defined. For the development of a deeper functional-analytic theory—and to model memory mechanisms where the weighting of past events varies in a sufficiently regular, non-erratic manner—stronger regularity properties will be imposed. These additional conditions ensure that the kernel exhibits the kind of smooth, bounded behavior expected in many natural memory processes—from biological adaptation to material relaxation—while also providing the analytical tools needed for a rigorous functional-analytic treatment. In this subsection we therefore introduce additional requirements that will be in force throughout the remainder of this work.

\begin{definition}[Regular admissible kernel]\label{def:regular_admissible_kernel}
A mathematically admissible kernel $\kappa\in\mathscr{K}_{\mathrm{math}}$ is called a \emph{regular admissible kernel} if it satisfies the following three enhanced conditions:
\begin{enumerate}
\item[(R1)] \textbf{Uniform boundedness:} There exist constants $0<m_\kappa\le M_\kappa<\infty$ such that
      \[
      m_\kappa\le\kappa(\tau)\le M_\kappa\qquad\text{for all }\tau\in I;
      \]
\item[(R2)] \textbf{Normalization:}
      \[
      \int_0^T\kappa(\tau)\,d\tau=1;
      \]
\item[(R3)] \textbf{Lipschitz continuity:} There exists a constant $L_\kappa>0$ such that
      \[
      |\kappa(\tau_1)-\kappa(\tau_2)|\le L_\kappa|\tau_1-\tau_2|\qquad\text{for all }\tau_1,\tau_2\in I.
      \]
\end{enumerate}
The collection of all regular admissible kernels will be denoted by $\mathscr{K}_{\mathrm{reg}}$.
\end{definition}

\begin{remark}[Mathematical and physical interpretation of the regularity conditions]
The regularity conditions (R1)--(R3) carry multiple layers of significance, both from a mathematical and from a modelling perspective.
\begin{itemize}
\item[(R1)] \textbf{Uniform boundedness} excludes pathological situations where the kernel would grow without bound or become arbitrarily small, thereby keeping the memory weights within a controllable range and avoiding mathematical singularities.
\item[(R2)] \textbf{Normalization} endows $\kappa$ with a statistical interpretation as a probability density function on the time-lag interval $[0,T]$. Consequently, the integral $\int_0^t \kappa(t-s)\,ds$ can be viewed as the ``cumulative memory intensity'' up to time $t$. Normalization also makes the total memory strength comparable across different kernels.
\item[(R3)] \textbf{Lipschitz continuity} provides sufficient smoothness to ensure that the kernel evolves gradually with the time lag. This not only facilitates subsequent differential calculus involving $\kappa$, but also aligns with the physical intuition that the weighting of past events evolves in a continuous manner.
\end{itemize}
Together, these conditions single out $\mathscr{K}_{\mathrm{reg}}$ as a class of kernels that not only possess favorable mathematical properties but also capture the essential qualitative features of many empirically observed memory processes: a finite total memory budget (normalization), a controllable range of influence (uniform boundedness), and a gradual, non-abrupt change in weighting with the passage of time (Lipschitz continuity).
\end{remark}

\subsection{Generalized Admissible Kernels: Relaxed Regularity Conditions}\label{subsec:generalized_kernels}

The class $\mathscr{K}_{\mathrm{reg}}$ introduced in Subsection~\ref{subsec:regular_admissible_kernel} possesses excellent analytical properties and provides a solid foundation for the initial development of our theoretical framework. To further enhance the flexibility and applicability of our framework, we now introduce a broader class of kernels with weaker regularity requirements, thereby accommodating memory phenomena that may involve sign changes, jump discontinuities, or non-unit total weight—features that arise naturally in contexts such as inhibitory neural feedback, abrupt system resets, or adaptive gain control.

\begin{definition}[Generalized admissible kernel]\label{def:general_admissible_kernel}
A function $\kappa:I\to\mathbb{R}$ is called a \emph{generalized admissible kernel} if it satisfies the following three conditions:
\begin{enumerate}
\item[(G1)] \textbf{Essential boundedness:} $\kappa\in L^\infty(I)$; i.e. there exists a constant $C_\kappa>0$ such that
      \[
      \esssup_{\tau\in I}|\kappa(\tau)|\le C_\kappa,
      \]
      where $\esssup$ denotes the essential supremum. This condition allows the kernel to exceed the bound $C_\kappa$ on a set of measure zero, offering greater flexibility in modelling applications.
      
\item[(G2)] \textbf{Integrability and non-degeneracy:} $\kappa\in L^1(I)$, i.e.
      \[
      \|\kappa\|_{L^1(I)}:=\int_0^T|\kappa(\tau)|\,d\tau<\infty,
      \]
      and there exists a constant $\delta_\kappa>0$ such that
      \[
      \left|\int_0^T\kappa(\tau)\,d\tau\right|\ge\delta_\kappa.
      \]
      The latter inequality ensures that the total ``cumulative weight'' of the kernel is non-zero, excluding the trivial case where positive and negative contributions cancel out and the overall memory effect vanishes.
      
\item[(G3)] \textbf{Bounded variation:} $\kappa$ is a function of bounded variation on $I=[0,T]$. More precisely, the \emph{total variation} of $\kappa$ on $I$ is defined as
      \[
      \operatorname{Var}_{I}(\kappa):=\sup_{\mathcal{P}}\sum_{i=1}^n|\kappa(\tau_i)-\kappa(\tau_{i-1})|,
      \]
      where the supremum is taken over all partitions $\mathcal{P}=\{\tau_0,\tau_1,\dots,\tau_n\}$ of $I$ satisfying $0=\tau_0<\tau_1<\cdots<\tau_n=T$. We require $\operatorname{Var}_{I}(\kappa)<\infty$, i.e., $\kappa\in BV(I)$.
\end{enumerate}
The collection of all generalized admissible kernels will be denoted by $\mathscr{K}_{\mathrm{gen}}$.
\end{definition}

\begin{remark}[Interpretation of the generalized conditions]\label{rmk:general_kernel_conditions}
We now elaborate on the mathematical content and modelling significance of each condition.
\begin{itemize}
    \item \textbf{Essential boundedness (G1).} For a Lebesgue measurable function $f:E\to\mathbb{R}\cup\{\pm\infty\}$, the essential supremum is defined as
          \[
\esssup_{x\in E}f(x):=\inf\bigl\{M\in\mathbb{R}\cup\{+\infty\}: f(x)\le M\text{ for almost every }x\in E\bigr\},
\]
or equivalently,
\[
\esssup_{x\in E}f(x)=\inf\bigl\{M\in\mathbb{R}:\mu(\{x\in E:f(x)>M\})=0\bigr\},
\]
with the convention that the infimum over an empty set is $+\infty$ (hence $\esssup f=+\infty$ when the set $\{M\in\mathbb{R}:\mu(\{x\in E:f(x)>M\})=0\}$ is empty). This notion relaxes the pointwise supremum by ignoring exceptional values on null sets, which is often more natural in applications.

\item \textbf{Non-degeneracy condition (G2).} The requirement $|\int_0^T\kappa(\tau)d\tau|\ge\delta_\kappa>0$ is essential. If $\int_0^T\kappa=0$, the memory term $\int_0^t\kappa(t-s)u(s)ds$ could reduce to a functional where positive and negative contributions cancel, potentially leading to a diminished or vanishing overall memory effect. The constant $\delta_\kappa$ excludes such degenerate cases, ensuring that the kernel retains a nontrivial cumulative influence.
    
    \item \textbf{Bounded variation (G3).} Functions of bounded variation possess several key properties that are relevant in the context of memory kernels: they are differentiable almost everywhere, admit a Jordan decomposition as the difference of two monotone nondecreasing functions \cite{leonov1996total, chistyakov2010maps, ene1998decomposition}, and have at most countably many discontinuities, all of which are of the first kind (jump discontinuities). In the context of memory kernels, the bounded variation condition allows for jump-like changes—capturing abrupt adjustments or resets in the memory mechanism—while ensuring that the total oscillatory magnitude $\operatorname{Var}_{[0,T]}(\kappa)$ remains finite, thereby guaranteeing the stability and controllability of the weighted historical accumulation.
\end{itemize}
\end{remark}

\begin{remark}[Distinctive features and role of the generalized kernel class]\label{rmk:gen_kernel_features}
The generalized class $\mathscr{K}_{\mathrm{gen}}$ is designed to extend modelling capabilities in three principal directions:
\begin{itemize}
    \item \textbf{Sign flexibility:} Negative values of $\kappa$ are permitted, enabling the description of inhibitory effects or negative feedback mechanisms arising from past states.
    \item \textbf{Relaxed regularity:} Kernels of bounded variation are admissible; global Lipschitz continuity is not imposed. Hence, kernels with (at most countably many) jump discontinuities are permitted, which is suitable for capturing abrupt changes, resets, or piecewise constant memory weights.
    \item \textbf{Optional normalization:} The integral $\int_0^T\kappa(\tau)d\tau$ may take any non-zero value, with its magnitude controlled by the constant $\delta_\kappa$. This provides modelling freedom for systems where the overall memory intensity may vary or be adaptively regulated.
\end{itemize}
The generalized class $\mathscr{K}_{\mathrm{gen}}$ introduced here serves as an interface for potential extensions: under additional technical conditions, many of the results obtained for $\mathscr{K}_{\mathrm{reg}}$ are expected to carry over to $\mathscr{K}_{\mathrm{gen}}$, thereby covering an even broader spectrum of memory phenomena and application scenarios.
\end{remark}

\subsection{Stationary Memory Kernels and Their Representation}\label{subsec:stationary_kernel}

In many physical, biological, and engineering systems \cite{cortazar2021heat, dong2026adaptive, cortazar2024asymptotic, wangattractor, nurutdinov2024bio, mori2018dissipative, sunthrayuth2025inertial, cheng2015inverse}, the manner in which memory weighting varies frequently depends upon the time elapsed since a past event, as opposed to the absolute moment at which that event occurred. This characteristic, referred to as \emph{time-shift stationarity}, offers a natural and widely adopted simplification for modelling memory processes. On the basis of the kernel classes introduced in the preceding subsections, we now provide a formal definition of stationary memory kernels.

\begin{definition}[Stationary memory kernel]\label{def:stationary_memory_kernel}
Let $\kappa:I\to[0,\infty)$ be a mathematically admissible kernel in the sense of Definition~\ref{def:math_admissible_kernel} (more specifically, belonging to $\mathscr{K}_{\mathrm{reg}}$ or $\mathscr{K}_{\mathrm{gen}}$). The associated \emph{stationary memory kernel} $K_\kappa:\Omega\to[0,\infty)$ is defined by
\begin{equation}\label{eq:stationary_kernel_def}
K_\kappa(t,s):=\kappa(t-s),\qquad 0\le s\le t\le T.
\end{equation}
Consequently, the weight attributed to a historical state is determined entirely by the time lag $\tau:=t-s\in[0,T]$, and does not depend on the particular historical instant $s$ or the current moment $t$.
\end{definition}

\begin{remark}[Interpretation of the stationarity assumption]\label{rmk:stationarity_assumption}
The relation $K_\kappa(t,s)=\kappa(t-s)$ encapsulates two fundamental aspects of stationary memory:
\begin{enumerate}
    \item \textbf{Time-translation invariance:} The memory characteristics of the system remain unchanged under shifts of the time axis; translating the entire timeline by any amount leaves the memory profile unaltered.
    \item \textbf{Dependence on relative time:} The weighting factor is determined exclusively by the elapsed time $\tau=t-s$ since the historical event, and is independent of the specific moment $s$ at which the event occurred. This reflects the principle that the influence of a past event is governed solely by how much time has passed, without reference to absolute calendar time.
\end{enumerate}
The stationarity assumption is adopted throughout this work as a foundational premise for the theoretical development. It captures the characteristic feature that the influence of past events varies with the passage of time, while providing a clear and coherent structure for the analysis that follows.
\end{remark}

\begin{remark}[Notational convention]\label{rmk:stationary_notation}
Unless explicitly stated otherwise, the term ``kernel'' shall refer to a stationary memory kernel $K_\kappa(t,s)=\kappa(t-s)$ generated by some $\kappa\in\mathscr{K}_{\mathrm{reg}}$ according to Definition~\ref{def:stationary_memory_kernel}. Depending on the context, we shall employ interchangeably the binary notation $K_\kappa(t,s)$ and the univariate notation $\kappa(\tau)$, as they are essentially equivalent under the stationarity hypothesis. All subsequent developments are conducted within this stationary framework, thereby ensuring clarity and consistency throughout the presentation.
\end{remark}

\subsection{Typical Examples of Kernel Functions}\label{subsec:kernel_examples}

To render the abstract kernel classes introduced above more concrete and to demonstrate that they encompass classical and physically important memory phenomena, this section presents three prototypical examples that are widely encountered in applications. These examples not only validate the reasonableness of Definitions~\ref{def:regular_admissible_kernel} and~\ref{def:general_admissible_kernel}, but also furnish concrete objects for the theoretical analysis that follows.

\begin{example}[Exponential kernel in the regular class]\label{ex:exponential_kernel}
A representative example belonging to the regular admissible kernel class $\mathscr{K}_{\mathrm{reg}}$ is provided by the exponential kernel. Let $\alpha>0$ be a rate parameter and define
\begin{equation}\label{eq:exponential_kernel}
\kappa_{\alpha}^{\mathrm{reg}}(\tau) := \frac{\alpha e^{-\alpha\tau}}{1 - e^{-\alpha T}}, \qquad \tau\in I.
\end{equation}
This kernel possesses the following properties:
\begin{itemize}
    \item \textbf{Basic mathematical properties:} It satisfies all the conditions (M1)--(M3) in Definition~\ref{def:math_admissible_kernel} for mathematically admissible kernels. Clearly $\kappa_{\alpha}^{\mathrm{reg}}\ge 0$ and is continuous, and its integral is given by
          \[
          \int_0^T \kappa_{\alpha}^{\mathrm{reg}}(\tau)\,d\tau = \frac{\alpha}{1 - e^{-\alpha T}} \int_0^T e^{-\alpha\tau}\,d\tau = 1.
          \]
    
    \item \textbf{Verification of regularity:} This kernel belongs to the regular class $\mathscr{K}_{\mathrm{reg}}$. First, for any $\tau\in I$,
          \[
          0 < \frac{\alpha e^{-\alpha T}}{1 - e^{-\alpha T}} \le \kappa_{\alpha}^{\mathrm{reg}}(\tau) \le \frac{\alpha}{1 - e^{-\alpha T}} < \infty,
          \]
          which confirms the uniform boundedness condition (R1). The normalization condition (R2) follows directly from the integral evaluation above. For Lipschitz continuity (R3), we compute the derivative
          \[
          \frac{d}{d\tau}\kappa_{\alpha}^{\mathrm{reg}}(\tau) = -\frac{\alpha^2 e^{-\alpha\tau}}{1 - e^{-\alpha T}},
          \]
          whose absolute value is bounded on $I$ by $\frac{\alpha^2}{1 - e^{-\alpha T}}$; consequently, $\kappa_{\alpha}^{\mathrm{reg}}$ is Lipschitz continuous with constant $L_\kappa = \frac{\alpha^2}{1 - e^{-\alpha T}}$.
    
    \item \textbf{Physical interpretation:} This kernel describes the classical ``exponential'' weighting pattern. The memory weight varies exponentially with the elapsed time $\tau$, and the parameter $\alpha$ controls the rate of this variation: larger values of $\alpha$ correspond to a more rapid decrease, giving prominence to recent history, while smaller $\alpha$ imply a more gradual variation, allowing more distant events to retain some influence. This model is a standard choice for describing short-term memory behaviour.
\end{itemize}
\end{example}

\begin{example}[Power-law kernel in the generalized class]\label{ex:power_law_kernel_generalized}
As a concrete instance of the generalized admissible kernel class $\mathscr{K}_{\mathrm{gen}}$ (see Definition~\ref{def:general_admissible_kernel}), let $\gamma\in(0,1)$ and $\varepsilon>0$, and define
\begin{equation}\label{eq:power_law_kernel_generalized}
\kappa_{\gamma,\varepsilon}^{\mathrm{gen}}(\tau) := \frac{1-\gamma}{T^{1-\gamma}} (T - \tau + \varepsilon)^{-\gamma}, \qquad \tau\in I.
\end{equation}
This kernel exhibits the following properties:
\begin{itemize}
    \item \textbf{Essential boundedness (G1):} Since $\varepsilon>0$, the denominator satisfies $(T-\tau+\varepsilon)\ge\varepsilon>0$ for all $\tau\in I$. Consequently,
      \[
      0 < \frac{1-\gamma}{T^{1-\gamma}} (T+\varepsilon)^{-\gamma} \le \kappa_{\gamma,\varepsilon}^{\mathrm{gen}}(\tau) \le \frac{1-\gamma}{T^{1-\gamma}} \varepsilon^{-\gamma} < \infty,
      \]
      which shows that $\kappa_{\gamma,\varepsilon}^{\mathrm{gen}}$ is uniformly bounded on $I$. More precisely,
      \[
      \esssup_{\tau\in I} |\kappa_{\gamma,\varepsilon}^{\mathrm{gen}}(\tau)| \le \frac{1-\gamma}{T^{1-\gamma}} \varepsilon^{-\gamma}.
      \]
      Taking $C_\kappa := \frac{1-\gamma}{T^{1-\gamma}} \varepsilon^{-\gamma}$, we have $\esssup_{\tau\in I} |\kappa_{\gamma,\varepsilon}^{\mathrm{gen}}(\tau)| \le C_\kappa < \infty$, confirming that $\kappa_{\gamma,\varepsilon}^{\mathrm{gen}}\in L^\infty(I)$ and thus satisfies condition (G1).
    
    \item \textbf{Integrability and non-degeneracy (G2):} Computing the integral explicitly,
      \begin{align*}
      \int_0^T \kappa_{\gamma,\varepsilon}^{\mathrm{gen}}(\tau)\,d\tau 
      &= \frac{1-\gamma}{T^{1-\gamma}} \int_0^T (T-\tau+\varepsilon)^{-\gamma}\,d\tau \\
      &= \frac{1-\gamma}{T^{1-\gamma}} \left[ -\frac{(T-\tau+\varepsilon)^{1-\gamma}}{1-\gamma} \right]_{\tau=0}^{\tau=T} \\
      &= \frac{(T+\varepsilon)^{1-\gamma} - \varepsilon^{1-\gamma}}{T^{1-\gamma}}.
      \end{align*}
      Since $\kappa_{\gamma,\varepsilon}^{\mathrm{gen}}(\tau) > 0$ for all $\tau\in I$, we have $|\kappa_{\gamma,\varepsilon}^{\mathrm{gen}}(\tau)| = \kappa_{\gamma,\varepsilon}^{\mathrm{gen}}(\tau)$, and therefore
      \[
      \|\kappa_{\gamma,\varepsilon}^{\mathrm{gen}}\|_{L^1(I)} = \int_0^T \kappa_{\gamma,\varepsilon}^{\mathrm{gen}}(\tau)\,d\tau = \frac{(T+\varepsilon)^{1-\gamma} - \varepsilon^{1-\gamma}}{T^{1-\gamma}} < \infty,
      \]
      which verifies the integrability condition $\kappa_{\gamma,\varepsilon}^{\mathrm{gen}}\in L^1(I)$. Moreover,
      \[
      \left|\int_0^T \kappa_{\gamma,\varepsilon}^{\mathrm{gen}}(\tau)\,d\tau\right| = \frac{(T+\varepsilon)^{1-\gamma} - \varepsilon^{1-\gamma}}{T^{1-\gamma}} > 0.
      \]
      Taking $\delta_\kappa := \frac{(T+\varepsilon)^{1-\gamma} - \varepsilon^{1-\gamma}}{T^{1-\gamma}}$, we obtain $|\int_0^T \kappa_{\gamma,\varepsilon}^{\mathrm{gen}}(\tau)\,d\tau| = \delta_\kappa > 0$, thereby satisfying the non-degeneracy condition in (G2).
      
  A closer examination of this integral reveals an interesting feature: for any fixed $\varepsilon>0$, one has
  \[
  \int_0^T \kappa_{\gamma,\varepsilon}^{\mathrm{gen}}(\tau)\,d\tau < 1.
  \]
  To verify this inequality, consider the auxiliary function $f: [0,\infty) \to \mathbb{R}$ defined by
  \[
  f(z) := (1+z)^{1-\gamma} - z^{1-\gamma}, \qquad z \ge 0.
  \]
  With the change of variable $z = \varepsilon/T \ge 0$, the integral becomes
  \[
  \int_0^T \kappa_{\gamma,\varepsilon}^{\mathrm{gen}}(\tau)\,d\tau = f\!\left(\frac{\varepsilon}{T}\right).
  \]
  
  \textbf{Proof of the inequality.} The function $f$ is continuous on $[0,\infty)$ and differentiable on $(0,\infty)$. Its derivative is given by
  \[
  f'(z) = (1-\gamma)\left[(1+z)^{-\gamma} - z^{-\gamma}\right], \qquad z > 0.
  \]
  Since $\gamma \in (0,1)$, the map $t \mapsto t^{-\gamma}$ is strictly decreasing on $(0,\infty)$; consequently, for any $z > 0$,
  \[
  (1+z)^{-\gamma} < z^{-\gamma},
  \]
  which implies $f'(z) < 0$ for all $z > 0$. Hence $f$ is strictly decreasing on $(0,\infty)$.
  
  Now fix an arbitrary $\varepsilon > 0$ and set $z_0 = \varepsilon/T > 0$. Applying the Mean Value Theorem on the interval $[0, z_0]$, there exists $\xi \in (0, z_0)$ such that
  \[
  f(z_0) - f(0) = f'(\xi) z_0.
  \]
  Noting that $f(0) = 1$ and that $f'(\xi) < 0$, we obtain
  \[
  f(z_0) = 1 + f'(\xi) z_0 < 1.
  \]
  Substituting back yields
  \[
  \int_0^T \kappa_{\gamma,\varepsilon}^{\mathrm{gen}}(\tau)\,d\tau = f\!\left(\frac{\varepsilon}{T}\right) < 1.
  \]

    \item \textbf{Bounded variation (G3):} The function $\kappa_{\gamma,\varepsilon}^{\mathrm{gen}}$ is monotonically increasing on $I$. Indeed, for any $0\le\tau_1<\tau_2\le T$, we have $(T-\tau_1+\varepsilon) > (T-\tau_2+\varepsilon)$, and since the exponent $-\gamma$ is negative, it follows that $(T-\tau_1+\varepsilon)^{-\gamma} < (T-\tau_2+\varepsilon)^{-\gamma}$; consequently $\kappa_{\gamma,\varepsilon}^{\mathrm{gen}}(\tau_1) < \kappa_{\gamma,\varepsilon}^{\mathrm{gen}}(\tau_2)$. By monotonicity, for any partition $\mathcal{P}:0=\tau_0<\tau_1<\cdots<\tau_n=T$ of $I$, the sum of absolute differences reduces to the difference of the endpoint values:
      \[
      \sum_{i=1}^n |\kappa_{\gamma,\varepsilon}^{\mathrm{gen}}(\tau_i) - \kappa_{\gamma,\varepsilon}^{\mathrm{gen}}(\tau_{i-1})|
      = \sum_{i=1}^n \bigl[\kappa_{\gamma,\varepsilon}^{\mathrm{gen}}(\tau_i) - \kappa_{\gamma,\varepsilon}^{\mathrm{gen}}(\tau_{i-1})\bigr]
      = \kappa_{\gamma,\varepsilon}^{\mathrm{gen}}(T) - \kappa_{\gamma,\varepsilon}^{\mathrm{gen}}(0).
      \]
      This value is independent of the particular partition $\mathcal{P}$, and therefore the total variation is given by
      \[
      \operatorname{Var}_{[0,T]}(\kappa_{\gamma,\varepsilon}^{\mathrm{gen}}) 
      = \sup_{\mathcal{P}} \bigl( \kappa_{\gamma,\varepsilon}^{\mathrm{gen}}(T) - \kappa_{\gamma,\varepsilon}^{\mathrm{gen}}(0) \bigr)
      = \kappa_{\gamma,\varepsilon}^{\mathrm{gen}}(T) - \kappa_{\gamma,\varepsilon}^{\mathrm{gen}}(0) < \infty,
      \]
      which establishes (G3).
    
   \item \textbf{Relation to the regular class:} It is instructive to examine how $\kappa_{\gamma,\varepsilon}^{\mathrm{gen}}$ relates to the regular class $\mathscr{K}_{\mathrm{reg}}$. The kernel satisfies two of the three conditions defining $\mathscr{K}_{\mathrm{reg}}$, namely uniform boundedness (R1) and Lipschitz continuity (R3). 
      \begin{itemize}
          \item \textbf{Uniform boundedness (R1):} As already shown in the verification of (G1), for any $\tau\in I$,
                \[
                0 < \frac{1-\gamma}{T^{1-\gamma}} (T+\varepsilon)^{-\gamma} \le \kappa_{\gamma,\varepsilon}^{\mathrm{gen}}(\tau) \le \frac{1-\gamma}{T^{1-\gamma}} \varepsilon^{-\gamma} < \infty,
                \]
                which confirms (R1).
          \item \textbf{Lipschitz continuity (R3):} Differentiating $\kappa_{\gamma,\varepsilon}^{\mathrm{gen}}$ yields
      \[
      \frac{d}{d\tau}\kappa_{\gamma,\varepsilon}^{\mathrm{gen}}(\tau) = \frac{\gamma(1-\gamma)}{T^{1-\gamma}} (T-\tau+\varepsilon)^{-\gamma-1},
      \]
      and this derivative is uniformly bounded on $I$; indeed,
      \[
      \sup_{\tau\in I} \left| \frac{d}{d\tau}\kappa_{\gamma,\varepsilon}^{\mathrm{gen}}(\tau) \right| \le \frac{\gamma(1-\gamma)}{T^{1-\gamma}} \varepsilon^{-\gamma-1} < \infty.
      \]
      By the Mean Value Theorem, this implies that $\kappa_{\gamma,\varepsilon}^{\mathrm{gen}}$ is Lipschitz continuous on $I$ with Lipschitz constant
      \[
      L_\kappa = \frac{\gamma(1-\gamma)}{T^{1-\gamma}} \varepsilon^{-\gamma-1},
      \]
      thereby satisfying condition (R3).
      \end{itemize}
     As observed in the verification of (G2) above, the integral of $\kappa_{\gamma,\varepsilon}^{\mathrm{gen}}$ is strictly less than $1$ for any $\varepsilon>0$; consequently the normalization condition (R2) is not satisfied. Hence $\kappa_{\gamma,\varepsilon}^{\mathrm{gen}}$ does not belong to $\mathscr{K}_{\mathrm{reg}}$. (When $\varepsilon = 0$, the integral equals $1$, but in that case $\kappa_{\gamma,0}^{\mathrm{gen}}$ becomes unbounded at $\tau = T$, violating (R1) and therefore also lies outside $\mathscr{K}_{\mathrm{reg}}$.)

This observation illustrates the purpose of the generalized class $\mathscr{K}_{\mathrm{gen}}$: it accommodates physically reasonable kernels that possess the core analytical properties (boundedness and regularity) of the regular class, yet are not normalized (or have adjustable total weight) and therefore reside outside $\mathscr{K}_{\mathrm{reg}}$.
    
    \item \textbf{Physical interpretation:} For sufficiently small regularization parameter $\varepsilon > 0$, this kernel describes a weighting pattern that evolves slowly with the time lag, following the power law $(T - \tau + \varepsilon)^{-\gamma}$.
    
      The parameter $\gamma \in (0,1)$ controls the rate of this evolution. Consider two historical instants $0 \le \tau_1 < \tau_2 \le T$; their weight ratio is
      \[
      \frac{\kappa_{\gamma,\varepsilon}^{\mathrm{gen}}(\tau_2)}{\kappa_{\gamma,\varepsilon}^{\mathrm{gen}}(\tau_1)} 
      = \left( \frac{T - \tau_1 + \varepsilon}{T - \tau_2 + \varepsilon} \right)^{\gamma}.
      \]
      Since $T - \tau_1 + \varepsilon > T - \tau_2 + \varepsilon$, this ratio exceeds $1$, indicating that the weight varies with the elapsed time in a manner that assigns greater influence to more distant events. For fixed $\tau_1, \tau_2$ and $\varepsilon$, the ratio as a function of $\gamma$, namely $\gamma \mapsto \left( \frac{T - \tau_1 + \varepsilon}{T - \tau_2 + \varepsilon} \right)^{\gamma}$, is strictly increasing on $(0,1)$ (because the base is greater than $1$). Thus, smaller $\gamma$ yield a ratio closer to $1$, corresponding to a more gradual change and allowing more distant events to retain noticeable influence, while larger $\gamma$ produce a steeper variation, further amplifying the influence of more distant events.
    
      The parameter $\varepsilon > 0$ serves a dual purpose: mathematically, it eliminates the singularity at $\tau = T$, ensuring that the kernel fully satisfies the technical requirements of the generalized class; physically, it may be interpreted as a small adjustment to the weighting of the very recent past (where $\tau$ is close to $T$), reflecting a finite resolution in the system's perception of the present moment.
\end{itemize}
Thus $\kappa_{\gamma,\varepsilon}^{\mathrm{gen}} \in \mathscr{K}_{\mathrm{gen}}$, providing a typical and physically meaningful example of the generalized kernel class.
\end{example}

\begin{example}[Finite-memory kernel in the generalized class]\label{ex:finite_memory_kernel}
Another illustrative example, which belongs to the generalized class $\mathscr{K}_{\mathrm{gen}}$ (see Definition~\ref{def:general_admissible_kernel}) but not to the regular class $\mathscr{K}_{\mathrm{reg}}$ (see Definition~\ref{def:regular_admissible_kernel}) (except in the limiting case $\Delta=T$), is provided by the finite-memory kernel. Let $\Delta\in(0,T]$ be a parameter representing the length of the memory window, and define
\begin{equation}\label{eq:finite_memory_kernel}
\kappa_{\Delta}^{\mathrm{fm}}(\tau) := 
\begin{cases}
\displaystyle \frac{1}{\Delta}, & 0 \le \tau \le \Delta,\\[8pt]
0, & \Delta < \tau \le T.
\end{cases}
\end{equation}
This kernel exhibits the following features:
\begin{itemize}
    \item \textbf{Basic mathematical properties:} It satisfies the basic conditions (M1)--(M3) required for mathematically admissible kernels. The function is clearly non‑negative, measurable, and its integral evaluates to
          \[
          \int_0^T \kappa_{\Delta}^{\mathrm{fm}}(\tau)\,d\tau = \int_0^{\Delta} \frac{1}{\Delta}\,d\tau = 1.
          \]
    
    \item \textbf{Relation to the regular class:} The kernel $\kappa_{\Delta}^{\mathrm{fm}}$ fulfills two of the three conditions characterizing $\mathscr{K}_{\mathrm{reg}}$, but fails to satisfy the Lipschitz continuity requirement (R3) when $\Delta<T$.
          \begin{itemize}
              \item \textbf{Uniform boundedness (R1):} For every $\tau\in I$, one has $0\le\kappa_{\Delta}^{\mathrm{fm}}(\tau)\le 1/\Delta$, so (R1) holds.
              \item \textbf{Normalization (R2):} As shown above, $\int_0^T\kappa_{\Delta}^{\mathrm{fm}}=1$, hence (R2) is satisfied.
              \item \textbf{Lipschitz continuity (R3):} When $\Delta<T$, the kernel possesses a jump discontinuity at $\tau=\Delta$. Considering the difference quotient
                    \[
                    \frac{|\kappa_{\Delta}^{\mathrm{fm}}(\Delta+\epsilon)-\kappa_{\Delta}^{\mathrm{fm}}(\Delta-\epsilon)|}{2\epsilon}
                    = \frac{1/\Delta}{2\epsilon} = \frac{1}{2\Delta\epsilon},
                    \]
                    which becomes unbounded as $\epsilon\to0^+$, one sees that $\kappa_{\Delta}^{\mathrm{fm}}$ cannot be Lipschitz continuous on $I$; consequently (R3) is not fulfilled. (In the limiting case $\Delta=T$, the kernel reduces to the constant function $1/T$ on $[0,T]$, which is Lipschitz continuous and actually belongs to $\mathscr{K}_{\mathrm{reg}}$.)
          \end{itemize}
    
    \item \textbf{Membership in the generalized class:} Despite the lack of Lipschitz continuity, $\kappa_{\Delta}^{\mathrm{fm}}$ meets all three conditions defining the generalized class $\mathscr{K}_{\mathrm{gen}}$.
          \begin{itemize}
              \item \textbf{Essential boundedness (G1):} Clearly $\esssup_{\tau\in I}|\kappa_{\Delta}^{\mathrm{fm}}(\tau)| = 1/\Delta < \infty$.
              \item \textbf{Integrability and non-degeneracy (G2):} One has $\int_0^T|\kappa_{\Delta}^{\mathrm{fm}}(\tau)|d\tau = 1$ and $|\int_0^T\kappa_{\Delta}^{\mathrm{fm}}(\tau)d\tau| = 1$, so (G2) is satisfied (with $\delta_\kappa=1$).
              \item \textbf{Bounded variation (G3):} The function $\kappa_{\Delta}^{\mathrm{fm}}$ is piecewise constant with a single jump of height $1/\Delta$ at $\tau=\Delta$. For any partition $\mathcal{P}$ of $I$, the sum of absolute differences cannot exceed this jump height, and by taking partitions that include points arbitrarily close to $\Delta$ from both sides, the total variation is seen to be exactly $1/\Delta$. Hence $\operatorname{Var}_{[0,T]}(\kappa_{\Delta}^{\mathrm{fm}})=1/\Delta<\infty$, which establishes (G3).
          \end{itemize}
          Therefore $\kappa_{\Delta}^{\mathrm{fm}}\in\mathscr{K}_{\mathrm{gen}}$.
    
    \item \textbf{Physical interpretation:} This kernel models a system with a strict finite memory horizon: only the most recent $\Delta$ time units are retained, while everything older is completely forgotten. Such a weighting pattern arises naturally in digital systems, devices with limited storage capacity, or sliding‑window filters used in signal processing and real‑time applications. The parameter $\Delta$ governs the memory capacity: as $\Delta\to0^+$, the memory window shrinks to zero, so that in the limit the system retains information only about the present instant, effectively approaching a memoryless regime; when $\Delta=T$, the kernel is constant on the whole interval, representing uniform weighting of the entire past; for intermediate values $\Delta\in(0,T)$, the system possesses a finite but non‑trivial memory.
\end{itemize}
\end{example}

\begin{remark}[Significance and selection of the examples]\label{rmk:examples_significance}
The three examples presented above illustrate fundamental memory patterns that are both physically relevant and mathematically tractable. The exponential kernel $\kappa_{\alpha}^{\mathrm{exp}}$ (Example~\ref{ex:exponential_kernel}) belongs to the regular class $\mathscr{K}_{\mathrm{reg}}$ and captures short-memory behaviour, where the weighting assigned to past events diminishes with increasing time lag. The power-law kernel $\kappa_{\gamma,\varepsilon}^{\mathrm{gen}}$ (Example~\ref{ex:power_law_kernel_generalized}) represents a contrasting long-memory pattern: due to its increasing nature as a function of the time lag, it assigns greater weight to more distant past events, thereby complementing the short-memory scenario. The finite-memory kernel $\kappa_{\Delta}^{\mathrm{fm}}$ (Example~\ref{ex:finite_memory_kernel}) belongs to the generalized class $\mathscr{K}_{\mathrm{gen}}$ and illustrates how kernels with discontinuities can still be accommodated within the framework while retaining essential analytical properties such as bounded variation. Together, these examples demonstrate the flexibility of the proposed kernel classes in capturing a diverse range of memory phenomena.
\end{remark}

\subsection{Essential Integral Estimates for Admissible Kernels}\label{subsec:kernel_integral_estimates}

Integral estimates for kernel functions play a fundamental role in the analysis of memory-dependent operators. This subsection establishes a set of basic integral control inequalities. The estimates are formulated primarily for regular admissible kernels $\kappa\in\mathscr{K}_{\mathrm{reg}}$ (see Definition~\ref{def:regular_admissible_kernel}); their favourable properties—in particular boundedness and normalization—lead to particularly concise forms.

\begin{lemma}[Integral control estimates for regular kernels]\label{lem:kernel_integral_estimates}
Let $\kappa\in\mathscr{K}_{\mathrm{reg}}$ be a regular admissible kernel in the sense of Definition~\ref{def:regular_admissible_kernel}. Then the following estimates hold:
\begin{enumerate}
    \item \textbf{Cumulative weight estimate:} For every $t\in I$,
          \begin{equation}\label{eq:cumulative_weight}
          \int_0^t \kappa(t-s)\,ds \le 1.
          \end{equation}
          In particular, the normalization condition $\int_0^T\kappa(\tau)\,d\tau = 1$ is satisfied.
          
    \item \textbf{Weighted supremum estimate:} For every $f\in\mathcal{C}(I)$ and every $t\in I$,
          \begin{equation}\label{eq:weighted_supremum}
          \int_0^t \kappa(t-s) |f(s)|\,ds\le \|f\|_{\infty}.
          \end{equation}
          Here $\|f\|_{\infty}:=\sup_{s\in I}|f(s)|$ denotes the usual supremum norm on $\mathcal{C}(I)$.
          
    \item \textbf{Integral control of differences:} For every $f,g\in\mathcal{C}(I)$ and every $t\in I$,
          \begin{equation}\label{eq:difference_control}
          \left|\int_0^t \kappa(t-s)\bigl(f(s)-g(s)\bigr)\,ds\right| \le \|f-g\|_{\infty}.
          \end{equation}
\end{enumerate}
\end{lemma}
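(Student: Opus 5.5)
The plan is to reduce all three estimates to a single substitution identity and then use non-negativity (M1) together with normalization (R2).

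For the cumulative weight estimate \eqref{eq:cumulative_weight}, fix $t\in I$ and substitute $\tau=t-s$. This is a linear change of variables with Jacobian of absolute value one, mapping $s\in[0,t]$ onto $\tau\in[0,t]$, so
\[
\int_0^t\kappa(t-s)\,ds=\int_0^t\kappa(\tau)\,d\tau .
\]
Because $\kappa\ge0$ by (M1) and $[0,t]\subseteq[0,T]$, monotonicity of the Lebesgue integral gives
\[
\int_0^t\kappa(\tau)\,d\tau\le\int_0^T\kappa(\tau)\,d\tau=1
\]
by (R2). The case $t=T$ recovers the normalization itself, which is the ``in particular'' clause. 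Measurability of $s\mapsto\kappa(t-s)$ is immediate, since $\kappa$ is Lipschitz by (R3) and hence continuous.

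For the weighted supremum estimate \eqref{eq:weighted_supremum}, fix $f\in\mathcal{C}(I)$. Then $|f|$ is continuous and $\|f\|_\infty<\infty$ by compactness of $I$, so the integrand $\kappa(t-s)|f(s)|$ is continuous on $[0,t]$ and integrable. Since $\kappa(t-s)\ge0$, the pointwise bound
\[
\kappa(t-s)|f(s)|\le\kappa(t-s)\,\|f\|_\infty
\]
holds for all $s\in[0,t]$. Integrating and applying the first part yields
\[
\int_0^t\kappa(t-s)|f(s)|\,ds\le\|f\|_\infty\int_0^t\kappa(t-s)\,ds\le\|f\|_\infty .
\]

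For the difference estimate \eqref{eq:difference_control}, set $h:=f-g\in\mathcal{C}(I)$. The triangle inequality for integrals, together with $\kappa\ge0$, gives
\[
\left|\int_0^t\kappa(t-s)h(s)\,ds\right|\le\int_0^t\kappa(t-s)|h(s)|\,ds,
\]
and the second part applied to $h$ bounds the right-hand side by $\|f-g\|_\infty$.

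There is no real obstacle in any of these steps. The only points needing care are that non-negativity is used essentially, both in the monotonicity step of the first part and in the pointwise bound of the second, and that the change of variables is justified for Lebesgue integrals, which is standard.
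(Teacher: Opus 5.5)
Your proposal is correct and follows the paper's proof essentially step for step: the substitution $\tau=t-s$ with non-negativity and (R2) gives the first estimate, then the pointwise bound by $\|f\|_\infty$ gives the second, then the triangle inequality applied to $f-g$ gives the third. The only cosmetic differences are that you cite (M1) for non-negativity where the paper cites (R1), and that you add the measurability remark via (R3), which the paper leaves implicit.
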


\begin{proof}
(1) By the change of variables $\tau = t-s$, one obtains
\[
\int_0^t \kappa(t-s)\,ds = \int_0^t \kappa(\tau)\,d\tau.
\]
The non-negativity of $\kappa$ (condition (R1) in Definition~\ref{def:regular_admissible_kernel}) together with the normalization condition (R2) yields
\[
\int_0^t \kappa(\tau)\,d\tau \le \int_0^T \kappa(\tau)\,d\tau = 1.
\]

(2) From $\kappa\ge0$ and the definition of the supremum norm,
\[
\int_0^t \kappa(t-s) |f(s)|\,ds 
\le \sup_{s\in[0,t]}|f(s)| \int_0^t \kappa(t-s)\,ds 
\le \|f\|_{\infty} \int_0^t \kappa(\tau)\,d\tau.
\]
Applying estimate (1) gives $\|f\|_{\infty}\cdot 1 = \|f\|_{\infty}$.

(3) Applying estimate (2) with $f$ replaced by $f-g$ yields
\[
\left|\int_0^t \kappa(t-s)\bigl(f(s)-g(s)\bigr)\,ds\right|
\le \int_0^t \kappa(t-s) |f(s)-g(s)|\,ds
\le \|f-g\|_{\infty}.
\]
The first inequality follows from the triangle inequality for integrals together with the non-negativity of $\kappa$.
\end{proof}

\begin{proposition}[Integral estimates for generalized kernels]\label{prop:generalized_estimates}
Let $\kappa\in\mathscr{K}_{\mathrm{gen}}$ be a generalized admissible kernel as introduced in Definition~\ref{def:general_admissible_kernel}. Then the following estimates hold:

\begin{enumerate}
    \item \textbf{Non-negative case:} If $\kappa(\tau)\ge 0$ for almost every $\tau\in I$, then for every $f\in\mathcal{C}(I)$ and every $t\in I$,
          \begin{equation}\label{eq:gen_nonneg_estimate}
          \int_0^t \kappa(t-s)|f(s)|\,ds \le \|f\|_{\infty} \int_0^T \kappa(\tau)\,d\tau.
          \end{equation}
          
    \item \textbf{General case (possibly sign-changing):} For every $f,g\in\mathcal{C}(I)$ and every $t\in I$,
          \begin{align}
          \left|\int_0^t \kappa(t-s)f(s)\,ds\right| &\le \|f\|_{\infty} \int_0^T |\kappa(\tau)|\,d\tau, \label{eq:gen_abs_estimate}\\
          \left|\int_0^t \kappa(t-s)\bigl(f(s)-g(s)\bigr)\,ds\right| &\le \|f-g\|_{\infty} \int_0^T |\kappa(\tau)|\,d\tau. \label{eq:gen_diff_estimate}
          \end{align}
\end{enumerate}
\end{proposition}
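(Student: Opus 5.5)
The plan mirrors the proof of Lemma~\ref{lem:kernel_integral_estimates}. The only new ingredients are that $\kappa$ may now be sign-changing, need not be normalized, and is controlled only almost everywhere. Throughout, $t\in I$ is fixed and we use the change of variables $\tau=t-s$, which maps $[0,t]$ onto $[0,t]$ and preserves Lebesgue measure. Hence for any integrable $h$ we have $\int_0^t h(t-s)\,ds=\int_0^t h(\tau)\,d\tau$.

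\textbf{Well-definedness.} The integrands are measurable and integrable on $[0,t]$. Indeed, $s\mapsto\kappa(t-s)$ is measurable as the composition of a measurable function with an affine map, and $f$ is continuous, hence bounded by $\|f\|_\infty$ on the compact interval $I$. Moreover $\kappa\in L^1(I)$ by (G2), and $\kappa\in L^\infty(I)$ by (G1) already suffices. Therefore $|\kappa(t-s)f(s)|\le \|f\|_\infty|\kappa(t-s)|$ is integrable, and all integrals in the statement are finite.

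\textbf{Non-negative case.} Suppose $\kappa\ge0$ a.e. The set of $s\in[0,t]$ with $\kappa(t-s)<0$ is the image of a null set under an affine bijection, so it is null. Hence
\[
\kappa(t-s)|f(s)|\le \|f\|_\infty\,\kappa(t-s)
\]
for a.e. $s$. Integrating and changing variables gives $\|f\|_\infty\int_0^t\kappa(\tau)\,d\tau$. Since $\kappa\ge0$ a.e., enlarging the domain from $[0,t]$ to $[0,T]$ can only increase the integral, which yields \eqref{eq:gen_nonneg_estimate}.

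\textbf{General case.} For \eqref{eq:gen_abs_estimate}, apply the triangle inequality for Lebesgue integrals:
\[
\left|\int_0^t\kappa(t-s)f(s)\,ds\right|\le\int_0^t|\kappa(t-s)|\,|f(s)|\,ds\le \|f\|_\infty\int_0^t|\kappa(\tau)|\,d\tau\le\|f\|_\infty\int_0^T|\kappa(\tau)|\,d\tau,
\]
where the last step uses $|\kappa|\ge0$. Estimate \eqref{eq:gen_diff_estimate} then follows by applying \eqref{eq:gen_abs_estimate} to $f-g\in\mathcal{C}(I)$, together with linearity of the integral.

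There is no genuine obstacle here. The only point requiring care is the a.e. nature of the hypotheses, namely checking that null sets are preserved under the reflection $s\mapsto t-s$. In particular, the bounded variation condition (G3) and the non-degeneracy constant $\delta_\kappa$ are not needed for this proposition.
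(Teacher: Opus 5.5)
Your proposal is correct and follows the paper's proof essentially step for step: bound the integrand by $\|f\|_\infty$ (or $\|f\|_\infty|\kappa|$), change variables via $\tau=t-s$, and enlarge the domain from $[0,t]$ to $[0,T]$ using non-negativity, with the difference estimate obtained by applying the previous one to $f-g$. Your added checks on well-definedness and on null sets being preserved under the reflection $s\mapsto t-s$ are extra care that the paper leaves implicit, and you are right that (G3) and $\delta_\kappa$ play no role here.
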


\begin{proof}
We prove each case separately.

\noindent \textbf{Proof of (1).} Assume $\kappa(\tau)\ge0$ for almost every $\tau\in I$. Then $|\kappa(\tau)|=\kappa(\tau)$ almost everywhere. Using the non-negativity of $\kappa$ and the definition of the supremum norm,
\[
\int_0^t \kappa(t-s)|f(s)|\,ds 
\le \sup_{s\in[0,t]}|f(s)| \int_0^t \kappa(t-s)\,ds 
\le \|f\|_{\infty} \int_0^t \kappa(\tau)\,d\tau.
\]
Since $\kappa\ge0$, one has $\int_0^t \kappa(\tau)\,d\tau \le \int_0^T \kappa(\tau)\,d\tau$, which yields the desired estimate.

\noindent \textbf{Proof of (2).} For arbitrary $\kappa\in\mathscr{K}_{\mathrm{gen}}$, condition (G2) in Definition~\ref{def:general_admissible_kernel} guarantees $\kappa\in L^1(I)$ with $\int_0^T|\kappa(\tau)|d\tau<\infty$. For any $t\in I$, the triangle inequality for integrals gives
\[
\left|\int_0^t \kappa(t-s)f(s)\,ds\right| 
\le \int_0^t |\kappa(t-s)|\,|f(s)|\,ds 
\le \|f\|_{\infty} \int_0^t |\kappa(t-s)|\,ds.
\]
By the change of variables $\tau=t-s$, the last integral equals $\int_0^t |\kappa(\tau)|\,d\tau \le \int_0^T |\kappa(\tau)|\,d\tau$, establishing \eqref{eq:gen_abs_estimate}. Estimate \eqref{eq:gen_diff_estimate} follows by applying the same argument to $f-g$ in place of $f$.
\end{proof}

\begin{remark}
The estimates established in this subsection provide basic tools for handling integral expressions involving kernel functions. Lemma~\ref{lem:kernel_integral_estimates} presents three fundamental estimates for regular kernels $\kappa\in\mathscr{K}_{\mathrm{reg}}$, relying on their non-negativity, boundedness, and normalization. Proposition~\ref{prop:generalized_estimates} extends these estimates to the broader class $\mathscr{K}_{\mathrm{gen}}$, where the $L^1$ norm $\int_0^T|\kappa(\tau)|d\tau$ naturally appears in place of the constant $1$. In the non‑negative case, the factor reduces to $\int_0^T\kappa(\tau)d\tau$, consistent with the structure of the regular estimates. Together, these results illustrate how the integral estimates adapt to the regularity properties of the kernel, offering flexibility for both the regular and generalized settings considered in this work.
\end{remark}

\section{Adaptive Sensitivity Functions: Definition, Construction, and Basic Properties}\label{sec:adaptive_sensitivity_framework}

The kernel-theoretic framework developed in Section~\ref{sec:kernel_theory} provides a hierarchical classification of memory kernels based on their regularity and integrability properties, capturing the temporal weighting patterns through stationary kernels of the form $K_\kappa(t,s)=\kappa(t-s)$. This framework, with its emphasis on the temporal dimension of memory, characterizes the weight assigned to a past state $f(s)$ as a function $\kappa(t-s)$ of the elapsed time. The adaptive sensitivity function introduced below retains this temporal weighting as a foundational layer, while incorporating an additional mechanism that modulates the weight according to the state value $f(s)$ itself. Specifically, we augment the kernel $\kappa$ with a sensitivity factor $\Lambda(s,f(s))$, yielding a composite memory weight of the form $\Lambda(s,f(s))\kappa(t-s)$. This construction gives mathematical expression to two key capabilities: the capacity for \emph{selective retention} of historically salient events, and the capacity for \emph{differentiated assessment} based on state-dependent importance.

\subsection{Mathematical Framework of Adaptive Sensitivity Functions}\label{subsec:adaptive_sensitivity_framework}

The memory formalism developed in the preceding section establishes a foundational temporal weighting mechanism, wherein the influence of a past state $f(s)$ is mediated through the kernel $\kappa$ via $K_\kappa(t,s)=\kappa(t-s)$. This construction captures the key idea that the weight attributed to a historical instant depends naturally on the time elapsed. The adaptive framework proposed below retains this temporal weighting as a core component, while extending it to incorporate sensitivity to the state values themselves. Specifically, we introduce a modulation factor $\Lambda(s,f(s))$ that works in tandem with $\kappa(t-s)$, yielding a composite weight $\Lambda(s,f(s))\kappa(t-s)$. The following definition formalizes this extended object.

\begin{definition}[Adaptive sensitivity function]\label{def:adaptive_sensitivity}
Let $I=[0,T]$ denote the fixed time horizon. A function $\Lambda: I\times\mathbb{R}\to[0,\infty)$ is termed an \emph{adaptive sensitivity function} if it satisfies the following four conditions:
\begin{enumerate}
    \item[(AS1)] \textbf{Uniform boundedness:} There exist constants $0<\lambda_{\mathrm{min}}\le\lambda_{\mathrm{max}}<\infty$ such that
          \[
          \lambda_{\mathrm{min}}\le\Lambda(s,x)\le\lambda_{\mathrm{max}}\qquad\text{for all }(s,x)\in I\times\mathbb{R}.
          \]
          This condition precludes both unbounded amplification and unphysical vanishing to zero, thereby maintaining the regularity essential for subsequent analytical developments.
    
    \item[(AS2)] \textbf{Lipschitz continuity with respect to the state variable:} There exists a constant $L_{\Lambda}>0$ such that
          \[
          |\Lambda(s,x)-\Lambda(s,y)|\le L_{\Lambda}|x-y|\qquad\text{for all }s\in I,\;x,y\in\mathbb{R}.
          \]
          The Lipschitz requirement guarantees that sensitivity adjustments occur gradually as the state varies, precluding abrupt jumps that would otherwise complicate both physical interpretation and mathematical treatment.
    
    \item[(AS3)] \textbf{Measurability with respect to the temporal variable:} For every fixed $x\in\mathbb{R}$, the mapping $s\mapsto\Lambda(s,x)$ is Lebesgue measurable on $I$.
          This measurability requirement is a fundamental prerequisite for the Lebesgue integration theory, ensuring that expressions of the form $\int_0^t\Lambda(s,f(s))\kappa(t-s)|f(s)|\,ds$ are mathematically well-defined.
    
    \item[(AS4)] \textbf{Positivity at the zero state:} There exists a constant $\varrho>0$ such that
          \[
          \Lambda(s,0)\ge\varrho\qquad\text{for all }s\in I.
          \]
          This condition excludes the degenerate case $\Lambda(s,0)=0$, which could compromise the distinctive status of the zero function in the functional-analytic framework.
\end{enumerate}
The collection of all adaptive sensitivity functions satisfying conditions (AS1)--(AS4) will be denoted by $\mathscr{A}(I)$. 

In this definition, the variable $x\in\mathbb{R}$ denotes a generic state value; when such a function is applied to a specific system trajectory $f\in\mathcal{C}(I)$, we write $\Lambda(s, f(s))$, where $f(s)\in\mathbb{R}$ is the value of $f$ at time $s$. This is precisely the form appearing in expressions such as $\int_0^t \Lambda(s, f(s)) \kappa(t-s) |f(s)| \, ds$, which will be central to the construction of the adaptive memory-dependent functional in the following subsection (see Definition~\ref{def:adaptive_memory_functional}).
\end{definition}

\begin{remark}[Systematic rationale underlying the sensitivity conditions]\label{rmk:sensitivity_conditions_design}
The four conditions encapsulated in Definition~\ref{def:adaptive_sensitivity} constitute a carefully balanced axiomatic system that reconciles mathematical tractability with modeling fidelity:
\begin{itemize}
    \item \textbf{Complementary roles of (AS1) and (AS4):} While condition (AS1) provides global bounds controlling the extreme excursions of $\Lambda$, condition (AS4) specifically safeguards against vanishing sensitivity at the distinguished state $x=0$. Together, they prevent both numerical singularities and physical degeneracies, ensuring that the sensitivity mechanism remains operative across admissible states.
    
    \item \textbf{Analytical significance of (AS2):} Beyond its natural physical interpretation---sensitivity ought to vary continuously with the underlying state---the Lipschitz condition furnishes the regularity necessary for subsequent estimates involving compositions of $\Lambda$ with continuous functions. This property will be essential when examining the continuity properties of functionals defined through adaptive memory weights.
    
    \item \textbf{Relation to the kernel hierarchy:} The adaptive sensitivity function $\Lambda$ operates in conjunction with the kernel classes $\mathscr{K}_{\mathrm{reg}}$ and $\mathscr{K}_{\mathrm{gen}}$ introduced in Section~\ref{sec:kernel_theory}. While those classes characterize the temporal weighting patterns through $\kappa$, the present construction introduces a complementary mechanism that modulates intensity according to state values. The composite expression $\Lambda(s,f(s))\kappa(t-s)$ thereby encodes both the objective, time-driven weighting of past events and the system's capacity to assess their significance based on content. This dual structure enables the description of nuanced memory phenomena such as heightened retention of salient episodes or suppression of routine fluctuations.
    
    \item \textbf{Balance between generality and analytical utility:} Each condition has been formulated to be as weak as possible while still providing the essential properties required for the subsequent development of the theory. Stronger requirements (e.g., differentiability with respect to $x$, joint continuity, or monotonicity) have been deliberately avoided to preserve maximal generality, thereby enabling $\mathscr{A}(I)$ to accommodate a broad spectrum of potential applications.
\end{itemize}
The axiomatic framework established here thus achieves a synthesis of mathematical rigor and physical plausibility, laying a foundation for the construction of adaptive memory structures.
\end{remark}

\begin{remark}[Modular interplay between kernel classes and sensitivity functions]\label{rmk:sensitivity_and_kernel_classes}
A distinctive feature of the present framework lies in the modular separation between the temporal weighting mechanism, encoded by $\kappa\in\mathscr{K}_{\mathrm{reg}}\cup\mathscr{K}_{\mathrm{gen}}$, and the state-dependent modulation, encoded by $\Lambda\in\mathscr{A}(I)$. This conceptual division offers considerable flexibility: one may select any kernel from the hierarchy established in Section~\ref{sec:kernel_theory} and combine it with any sensitivity function satisfying Definition~\ref{def:adaptive_sensitivity}, tailoring the resulting composite memory structure to specific modeling requirements while preserving the benefits of a unified theoretical treatment.
\end{remark}

\subsection{A Constructive Example: Sensitivity with Historical Deviation Accumulation}
\label{subsec:historical_deviation_example}

The axiomatic framework presented in Definition~\ref{def:adaptive_sensitivity} delineates a general class of functions $\mathscr{A}(I)$ characterized by boundedness, Lipschitz regularity in the state variable, measurability in time, and positivity at the zero state. Within this abstract setting, a natural question arises: does $\mathscr{A}(I)$ contain functions whose sensitivity mechanism reflects more intricate dependence on the historical evolution of the state, beyond instantaneous values? The present subsection addresses this question by exhibiting a concrete construction in which the sensitivity at a given state incorporates a weighted accumulation of past deviations from a reference trajectory. This construction serves both to illustrate the scope of the axiomatic framework and to provide a tangible object for subsequent analysis.

\begin{example}[Adaptive sensitivity based on historical deviations]
\label{ex:historical_deviation_adaptive}
Let $I=[0,T]$ and let $r\in\mathcal{C}(I)$ be a given continuous function, interpreted as a reference trajectory representing the nominal or expected evolution of the system. Choose parameters $\alpha_0>0$ governing the persistence of historical influence, $\gamma_0>0$ controlling the sensitivity to instantaneous deviations, and $\beta_0\ge 0$ modulating the strength of the historical feedback. 

For any given trajectory $f\in\mathcal{C}(I)$, define the function $\Lambda_f: I \to [0,\infty)$ by
\begin{equation}\label{eq:lambda_f}
\Lambda_f(s) := \lambda_{\min} + (\lambda_{\max} - \lambda_{\min}) \,
\frac{\tanh\bigl(\gamma_0 |f(s) - r(s)|\bigr)}
{1 + \beta_0 \displaystyle\int_0^s e^{-\alpha_0(s-\tau)} \tanh\bigl(\gamma_0 |f(\tau) - r(\tau)|\bigr) \, d\tau},
\end{equation}
where the constants satisfy $0<\lambda_{\min}<\lambda_{\max}<\infty$, and $\tanh(z)=(e^z-e^{-z})/(e^z+e^{-z})$ denotes the hyperbolic tangent function.

\noindent\textit{Interpretation.} For a fixed trajectory $f$, the denominator integral in \eqref{eq:lambda_f} accumulates the actual weighted deviations of $f$ from the reference $r$ over the interval $[0,s]$. This cumulative historical deviation then modulates the sensitivity at time $s$ through the denominator. The construction thus encodes a genuine historical feedback mechanism: if $f$ has persistently deviated from $r$ in the past, the sensitivity $\Lambda_f(s)$ is attenuated, even if the current deviation $|f(s)-r(s)|$ is large.

When $\beta_0 = 0$, the construction reduces to a purely instantaneous sensitivity that can be represented as a function $\Lambda\in\mathscr{A}(I)$ given by
\[
\Lambda(s,x) = \lambda_{\min} + (\lambda_{\max} - \lambda_{\min}) \tanh(\gamma_0 |x - r(s)|),
\]
so that for any trajectory $f$, we have $\Lambda_f(s) = \Lambda(s, f(s))$. Here $x\in\mathbb{R}$ denotes a generic state value, while $f(s)$ is the specific value of $f$ at time $s$. A detailed verification that this $\Lambda$ indeed belongs to $\mathscr{A}(I)$ is provided in Corollary~\ref{cor:beta_zero_case} below. For $\beta_0 > 0$, the sensitivity acquires genuine historical dependence and is most naturally viewed as an operator mapping each trajectory $f$ to the function $\Lambda_f$ defined above.

The construction incorporates several interrelated features:

\begin{itemize}
    \item \textbf{Bounded quantification of instantaneous deviations.} 
    The numerator $\tanh(\gamma_0|f(s)-r(s)|)$ maps the absolute deviation $|f(s)-r(s)|\in[0,\infty)$ to the bounded interval $[0,1)$. 
    The saturation property of the hyperbolic tangent tempers the influence of extreme deviations while preserving ordinal information; 
    the parameter $\gamma_0$ controls the slope of this function near the origin, i.e., the sensitivity to small deviations; it thus governs the gain of the instantaneous response.
    
    \item \textbf{Exponentially weighted accumulation of historical deviations.} 
    Define the weighted historical deviation at time $s$ by
    \[
    \mathcal{D}_f(s) := \int_0^s e^{-\alpha_0(s-\tau)}\tanh\bigl(\gamma_0|f(\tau)-r(\tau)|\bigr)\,d\tau.
    \]
    This quantity aggregates past deviations with an exponential weight $e^{-\alpha_0(s-\tau)}$ that diminishes as the historical time $\tau$ recedes from the present $s$. 
This weighting pattern reflects the principle that more recent episodes exert stronger influence on the current sensitivity; 
the parameter $\alpha_0$ determines the temporal extent of this memory: larger $\alpha_0$ corresponds to shorter persistence (emphasizing recent deviations), while smaller $\alpha_0$ permits more persistent historical influence.
    
    \item \textbf{Adaptive modulation through a feedback mechanism.} 
    The denominator introduces a self-regulating effect: if the trajectory $f$ has historically exhibited sustained deviations from the reference trajectory over an extended period, the accumulated integral becomes large, thereby attenuating the growth of $\Lambda_f(s)$. 
    This negative feedback may be viewed as a mathematical analog of adaptation phenomena observed in certain biological and engineered systems, where sustained anomalous stimuli lead to diminished responsiveness, thereby preventing excessive sensitivity to recurrent deviations \cite{allana2025towards, ruiz2022spectral, tsuda2026some}.
     
     A particularly illustrative scenario arises when the trajectory $f$ closely follows the reference $r$. In this case, $|f(s)-r(s)|$ and $|f(\tau)-r(\tau)|$ are small for all relevant times. For small arguments, the hyperbolic tangent satisfies $\tanh(z) = z + o(z)$ as $z\to 0$, so that $\tanh(\gamma_0|f(s)-r(s)|)$ is well approximated by $\gamma_0|f(s)-r(s)|$ up to higher-order corrections; an analogous statement holds for the integrand in the denominator. Consequently, both the numerator and the integral term in the denominator are small, and $\Lambda_f(s)$ remains close to its minimum value $\lambda_{\min}$, reflecting a low sensitivity to near-reference behavior. This limiting behavior underscores the consistency of the construction: trajectories that adhere to the nominal evolution receive little modulation, while significant deviations are required to elevate the sensitivity.
     
     The parameter $\beta_0$ controls the strength of this feedback: when $\beta_0=0$, the construction reduces to the purely instantaneous case described above, while positive $\beta_0$ introduces historical dependence.
    
    \item \textbf{Analytical tractability.} 
    The hyperbolic tangent function is Lipschitz continuous on $[0,\infty)$ with Lipschitz constant $1$, strictly increasing, and satisfies $0\le\tanh(z)<1$ for all $z\ge0$. 
    The composition $\tanh(\gamma_0|\cdot|)$ inherits these properties. The exponential kernel $e^{-\alpha_0(s-\tau)}$ guarantees that the integral term is well-defined and preserves the regularity of the integrand. 
    These properties facilitate the verification that, for each fixed $f$, the function $\Lambda_f$ exhibits characteristics analogous to conditions (AS1)--(AS4) in Definition~\ref{def:adaptive_sensitivity}.
\end{itemize}

This construction extends the notion of memory sensitivity from a static or instantaneously determined mapping to one that incorporates information about the historical behavior of the system trajectory. The resulting sensitivity function captures a dynamic interplay between present deviation and past experience, suggesting some possible modeling directions for systems where responsiveness adapts based on accumulated history.
\end{example}

\begin{theorem}[Basic properties of $\boldsymbol{\Lambda_f}$]
\label{thm:lambda_f_properties}
Under the assumptions of Example~\ref{ex:historical_deviation_adaptive}---namely, $r\in\mathcal{C}(I)$, $\alpha_0>0$, $\gamma_0>0$, $\beta_0\ge0$, and $0<\lambda_{\min}<\lambda_{\max}<\infty$---for any fixed trajectory $f\in\mathcal{C}(I)$, the function $\Lambda_f: I\to[0,\infty)$ defined by \eqref{eq:lambda_f} satisfies the following properties:
\begin{enumerate}
    \item[(P1)] \textbf{Uniform boundedness:} $\lambda_{\min} \le \Lambda_f(s) \le \lambda_{\max}$ for all $s\in I$.
    
    \item[(P2)] \textbf{Lipschitz-type estimate with respect to the supremum norm:} 
For any two trajectories $f,g\in\mathcal{C}(I)$,
\begin{equation}\label{eq:lambda_f_lipschitz_sup}
\|\Lambda_f - \Lambda_g\|_\infty \le L_\Lambda \|f-g\|_\infty,
\end{equation}
where $\|h\|_\infty := \sup_{t\in I}|h(t)|$ and
$L_\Lambda = (\lambda_{\max}-\lambda_{\min})\gamma_0\left(1 + \frac{\beta_0(1-e^{-\alpha_0 T})}{\alpha_0}\right)$.
    
    \item[(P3)] \textbf{Continuity (hence measurability) in time:} 
For each fixed trajectory $f\in\mathcal{C}(I)$, the function $s\mapsto\Lambda_f(s)$ is continuous on $I$, and consequently Lebesgue measurable.
    
    \item[(P4)] \textbf{Positivity at zero:} 
For the zero trajectory $f\equiv0$, one has $\Lambda_0(s) \ge \lambda_{\min} > 0$ for all $s\in I$.
\end{enumerate}
These properties are the precise analogues of conditions (AS1)--(AS4) in Definition~\ref{def:adaptive_sensitivity}, now formulated for the operator-induced function $\Lambda_f$.
\end{theorem}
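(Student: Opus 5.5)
The plan is to introduce two auxiliary functions that separate the instantaneous part of \eqref{eq:lambda_f} from its historical part:
\[
N_f(s):=\tanh\bigl(\gamma_0|f(s)-r(s)|\bigr),\qquad \mathcal{D}_f(s)=\int_0^s e^{-\alpha_0(s-\tau)}N_f(\tau)\,d\tau .
\]
With these, $\Lambda_f(s)=\lambda_{\min}+(\lambda_{\max}-\lambda_{\min})\,N_f(s)/(1+\beta_0\mathcal{D}_f(s))$. Every property will be read off from elementary facts about $N_f$ and $\mathcal{D}_f$.

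First I would record these facts. Since $\tanh$ maps $[0,\infty)$ into $[0,1)$, we have $0\le N_f<1$. Since the integrand is nonnegative, $\mathcal{D}_f\ge 0$, and with $\beta_0\ge0$ this gives $1+\beta_0\mathcal{D}_f\ge 1$. Because $\tanh$ is $1$-Lipschitz on $[0,\infty)$ and $\bigl||a|-|b|\bigr|\le|a-b|$,
\[
|N_f(s)-N_g(s)|\le\gamma_0|f(s)-g(s)|\le\gamma_0\|f-g\|_\infty .
\]
Integrating this against the exponential weight gives
\[
|\mathcal{D}_f(s)-\mathcal{D}_g(s)|\le\gamma_0\|f-g\|_\infty\,\frac{1-e^{-\alpha_0 s}}{\alpha_0}\le\gamma_0\|f-g\|_\infty\,\frac{1-e^{-\alpha_0 T}}{\alpha_0}.
\]

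(P1) then follows at once: the quotient $N_f/(1+\beta_0\mathcal{D}_f)$ lies in $[0,1)$, so $\Lambda_f(s)\in[\lambda_{\min},\lambda_{\max})$. (P4) is the special case $f\equiv0$ of (P1), using $\lambda_{\min}>0$.

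For (P3), $N_f$ is continuous as a composition of continuous maps, because $f$ and $r$ are continuous. I would write $\mathcal{D}_f(s)=e^{-\alpha_0 s}\int_0^s e^{\alpha_0\tau}N_f(\tau)\,d\tau$. This is a product of a continuous function and the integral of a continuous function with variable upper limit, so it is continuous. The denominator $1+\beta_0\mathcal{D}_f$ is continuous and bounded below by $1$, so $\Lambda_f$ is continuous on $I$, and continuity implies Lebesgue measurability.

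The main step, and the only one needing care, is (P2), because the difference of quotients must be split correctly to recover exactly the stated constant $L_\Lambda$. Writing $A_f:=1+\beta_0\mathcal{D}_f$, I would use the decomposition
\[
\frac{N_f}{A_f}-\frac{N_g}{A_g}=\frac{N_f-N_g}{A_f}+N_g\,\frac{\beta_0(\mathcal{D}_g-\mathcal{D}_f)}{A_fA_g}.
\]
For the first term, $A_f\ge1$ gives a bound of $\gamma_0\|f-g\|_\infty$. For the second term, $N_g<1$ and $A_fA_g\ge1$ give a bound of $\beta_0\gamma_0(1-e^{-\alpha_0T})\|f-g\|_\infty/\alpha_0$. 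I would then multiply the sum of these two bounds by $(\lambda_{\max}-\lambda_{\min})$ and take the supremum over $s\in I$. This yields \eqref{eq:lambda_f_lipschitz_sup} with
\[
L_\Lambda=(\lambda_{\max}-\lambda_{\min})\gamma_0\Bigl(1+\frac{\beta_0(1-e^{-\alpha_0T})}{\alpha_0}\Bigr).
\]
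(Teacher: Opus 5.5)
Your proposal is correct and follows essentially the same route as the paper: you use the same quotient decomposition (up to swapping the roles of $f$ and $g$) and the same bounds $N<1$, denominators $\ge 1$, and $\int_0^s e^{-\alpha_0(s-\tau)}\,d\tau\le(1-e^{-\alpha_0T})/\alpha_0$, which give the same constant $L_\Lambda$. The only differences are two shortcuts: you get the $\gamma_0$-Lipschitz bound for $\tanh(\gamma_0|\cdot-p|)$ from the $1$-Lipschitz property of $\tanh$ together with $\bigl||a|-|b|\bigr|\le|a-b|$, where the paper proves a separate piecewise Mean Value Theorem lemma, and you get continuity of the historical integral by factoring out $e^{-\alpha_0 s}$, where the paper uses an $\varepsilon$--$\delta$ argument based on uniform continuity on the triangle.
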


\begin{proof}
Fix an arbitrary $f\in\mathcal{C}(I)$.

\noindent\textbf{Proof of (P1): uniform boundedness.}
For any $s\in I$, the properties of the hyperbolic tangent yield
\[
0\le\tanh\bigl(\gamma_0|f(s)-r(s)|\bigr)<1,\qquad
0\le\tanh\bigl(\gamma_0|f(\tau)-r(\tau)|\bigr)<1\;\text{ for all }\tau\in[0,s].
\]
Since $\beta_0\ge0$ and $e^{-\alpha_0(s-\tau)}\ge0$, the denominator satisfies
\begin{equation}\label{eq:denom_lower_bound}
1\le 1+\beta_0\int_0^s e^{-\alpha_0(s-\tau)}\tanh\bigl(\gamma_0|f(\tau)-r(\tau)|\bigr)\,d\tau.
\end{equation}
Consequently,
\begin{equation}\label{eq:fraction_bound}
0\le\frac{\tanh\bigl(\gamma_0|f(s)-r(s)|\bigr)}
{1+\beta_0\displaystyle\int_0^s e^{-\alpha_0(s-\tau)}\tanh\bigl(\gamma_0|f(\tau)-r(\tau)|\bigr)\,d\tau}<1.
\end{equation}
Substituting into the definition of $\Lambda_f$ and using $0<\lambda_{\min}<\lambda_{\max}$ gives
\[
\lambda_{\min}\le\Lambda_f(s)\le\lambda_{\min}+(\lambda_{\max}-\lambda_{\min})\cdot1=\lambda_{\max},
\]
which establishes (P1).

\noindent\textbf{Proof of (P2): Lipschitz-type estimate with respect to the supremum norm.}
We now establish the Lipschitz-type estimate for $\Lambda_f$. For any two trajectories $f,g\in\mathcal{C}(I)$ and any $s\in I$, write
\[
\Lambda_f(s)=\lambda_{\min}+(\lambda_{\max}-\lambda_{\min})G_f(s),\qquad
\Lambda_g(s)=\lambda_{\min}+(\lambda_{\max}-\lambda_{\min})G_g(s),
\]
where
\[
G_f(s):=\frac{\tanh\bigl(\gamma_0|f(s)-r(s)|\bigr)}{1+\beta_0\displaystyle\int_0^s e^{-\alpha_0(s-\tau)}\tanh\bigl(\gamma_0|f(\tau)-r(\tau)|\bigr)\,d\tau},
\]
and $G_g(s)$ is defined analogously with $g$ in place of $f$. To establish this estimate, we aim to control $|G_f(s)-G_g(s)|$ by $\|f-g\|_\infty$.

To this end, we first establish a fundamental Lipschitz estimate for the hyperbolic tangent function.

\begin{lemma}[Lipschitz continuity of $\boldsymbol{\tanh(\gamma_0|\cdot-p|)}$]\label{lem:tanh_abs_lipschitz}
Let $p\in\mathbb{R}$ be fixed and let $\gamma_0>0$. Then the function $\psi_{p,\gamma_0}(x):=\tanh(\gamma_0|x-p|)$ is $\gamma_0$-Lipschitz continuous with respect to $x$; that is, for all $x,y\in\mathbb{R}$,
\begin{equation}\label{eq:tanh_abs_lipschitz}
\bigl|\tanh(\gamma_0|x-p|)-\tanh(\gamma_0|y-p|)\bigr|\le\gamma_0|x-y|.
\end{equation}
\end{lemma}

\begin{proof}
We first compute the derivative of the hyperbolic tangent:
\[
\frac{d}{dz}\tanh(z)=\operatorname{sech}^2(z)=\frac{1}{\cosh^2(z)}.
\]

For any real $z$, by the properties of the exponential function and the arithmetic–geometric mean inequality,
\[
\cosh(z)=\frac{e^z+e^{-z}}{2}\ge\sqrt{e^z\cdot e^{-z}}=1,
\]
with equality if and only if $e^z=e^{-z}$, i.e., $z=0$. Hence $\cosh^2(z)\ge1$, and consequently
\begin{equation}\label{eq:sech_bound}
0<\operatorname{sech}^2(z)=\frac{1}{\cosh^2(z)}\le1\qquad(\forall z\in\mathbb{R}).
\end{equation}

Now consider $\psi_{p,\gamma_0}(x)=\tanh(\gamma_0|x-p|)$. Write it in piecewise form:
\[
\psi_{p,\gamma_0}(x)=
\begin{cases}
\tanh\!\bigl(\gamma_0(p-x)\bigr), & x\le p,\\[4pt]
\tanh\!\bigl(\gamma_0(x-p)\bigr), & x\ge p.
\end{cases}
\]
Set
\[
\psi_-(x)=\tanh\!\bigl(\gamma_0(p-x)\bigr)\;(x\le p),\qquad
\psi_+(x)=\tanh\!\bigl(\gamma_0(x-p)\bigr)\;(x\ge p),
\]
so that $\psi_-(p)=\psi_+(p)=\tanh(0)=0$. Since $\tanh$ is continuous and the linear functions $t\mapsto\gamma_0(p-t)$ and $t\mapsto\gamma_0(t-p)$ are continuous, $\psi_-$ is continuous on $(-\infty,p]$ and $\psi_+$ is continuous on $[p,\infty)$. At $x=p$,
\[
\lim_{x\to p^-}\psi_{p,\gamma_0}(x)=\lim_{x\to p^-}\psi_-(x)=\psi_-(p)=0,\qquad
\lim_{x\to p^+}\psi_{p,\gamma_0}(x)=\lim_{x\to p^+}\psi_+(x)=\psi_+(p)=0,
\]
and $\psi_{p,\gamma_0}(p)=\tanh(0)=0$, so $\psi_{p,\gamma_0}$ is continuous at $x=p$.

For $x<p$, by the chain rule,
\[
\psi_-'(x)=\operatorname{sech}^2\!\bigl(\gamma_0(p-x)\bigr)\cdot\gamma_0\cdot(-1),
\]
hence
\[
|\psi_-'(x)|=\gamma_0\,\operatorname{sech}^2\!\bigl(\gamma_0(p-x)\bigr)\le\gamma_0\qquad(\forall x<p),
\]
where the last inequality uses \eqref{eq:sech_bound}.

For $x>p$, similarly,
\[
\psi_+'(x)=\operatorname{sech}^2\!\bigl(\gamma_0(x-p)\bigr)\cdot\gamma_0,
\]
and thus
\[
|\psi_+'(x)|=\gamma_0\,\operatorname{sech}^2\!\bigl(\gamma_0(x-p)\bigr)\le\gamma_0\qquad(\forall x>p).
\]

At $x=p$, the right derivative is
\begin{align}\label{eq:right_deriv}
\psi_+'(p^+)&=\lim_{h\to0^+}\frac{\psi_{p,\gamma_0}(p+h)-\psi_{p,\gamma_0}(p)}{h} \nonumber\\
&=\lim_{h\to0^+}\frac{\tanh(\gamma_0 h)}{h}
=\gamma_0\lim_{h\to0^+}\frac{\tanh(\gamma_0 h)}{\gamma_0 h}
=\gamma_0,
\end{align}
where we used $\displaystyle\lim_{z\to0}\frac{\tanh z}{z}=1$. The left derivative is
\[
\psi_-'(p^-)=\lim_{h\to0^+}\frac{\psi_{p,\gamma_0}(p-h)-\psi_{p,\gamma_0}(p)}{-h}
=\lim_{h\to0^+}\frac{\tanh(\gamma_0 h)}{-h}
=-\gamma_0\lim_{h\to0^+}\frac{\tanh(\gamma_0 h)}{\gamma_0 h}
=-\gamma_0.
\]
Thus $\psi_{p,\gamma_0}$ is not differentiable at $x=p$, but the absolute values of the left and right derivatives are both $\gamma_0$.

We now prove the Lipschitz inequality. Take arbitrary $x,y\in\mathbb{R}$ and consider two cases.

\noindent\textbf{Case 1: $x$ and $y$ lie on the same side of $p$ (i.e., $x\le y\le p$ or $p\le x\le y$).}
If $x=y$, the inequality $|\psi_{p,\gamma_0}(x)-\psi_{p,\gamma_0}(y)|\le\gamma_0|x-y|$ holds trivially. Assume $x<y$. Then $\psi_{p,\gamma_0}$ is differentiable on the interval $[x,y]$ (using $\psi_-$ if $x,y\le p$, or $\psi_+$ if $x,y\ge p$). By the Mean Value Theorem, there exists $\xi\in(x,y)$ such that
\[
\psi_{p,\gamma_0}(y)-\psi_{p,\gamma_0}(x)=\psi_{p,\gamma_0}'(\xi)(y-x),
\]
where $\psi_{p,\gamma_0}'(\xi)=\psi_-'(\xi)$ or $\psi_+'(\xi)$. Consequently,
\[
|\psi_{p,\gamma_0}(y)-\psi_{p,\gamma_0}(x)|=|\psi_{p,\gamma_0}'(\xi)|\cdot|y-x|\le\gamma_0|y-x|.
\]
If $x>y$, exchanging $x$ and $y$ yields the same estimate. Hence the inequality holds for all $x,y$ on the same side of $p$:
\begin{equation}\label{eq:lipschitz_same_side}
|\psi_{p,\gamma_0}(x)-\psi_{p,\gamma_0}(y)|\le\gamma_0|x-y|.
\end{equation}

\noindent\textbf{Case 2: $x$ and $y$ lie on opposite sides of $p$.}
Then the interval $[x,y]$ contains the point $p$ where $\psi_{p,\gamma_0}$ is not differentiable. We consider two subcases.

\emph{Subcase 2.1: $x\le p\le y$.}
Split the difference into two parts:
\[
|\psi_{p,\gamma_0}(y)-\psi_{p,\gamma_0}(x)|
\le|\psi_{p,\gamma_0}(y)-\psi_{p,\gamma_0}(p)|+|\psi_{p,\gamma_0}(p)-\psi_{p,\gamma_0}(x)|.
\]

For $|\psi_{p,\gamma_0}(y)-\psi_{p,\gamma_0}(p)|$: since $p\le y$, on the interval $[p,y]$ the function $\psi_{p,\gamma_0}$ coincides with $\psi_+$. By the Mean Value Theorem, there exists $\xi_1\in(p,y)$ such that
\[
\psi_+(y)-\psi_+(p)=\psi_+'(\xi_1)(y-p).
\]
Since $|\psi_+'(\xi_1)|\le\gamma_0$, and $\psi_+(y)=\psi_{p,\gamma_0}(y)$, $\psi_+(p)=\psi_{p,\gamma_0}(p)$, we obtain
\[
|\psi_{p,\gamma_0}(y)-\psi_{p,\gamma_0}(p)|=|\psi_+(y)-\psi_+(p)|\le\gamma_0|y-p|.
\]

For $|\psi_{p,\gamma_0}(p)-\psi_{p,\gamma_0}(x)|$: since $x\le p$, on the interval $[x,p]$ the function $\psi_{p,\gamma_0}$ coincides with $\psi_-$. By the Mean Value Theorem, there exists $\xi_2\in(x,p)$ such that
\[
\psi_-(p)-\psi_-(x)=\psi_-'(\xi_2)(p-x).
\]
Again $|\psi_-'(\xi_2)|\le\gamma_0$, and $\psi_-(p)=\psi_{p,\gamma_0}(p)$, $\psi_-(x)=\psi_{p,\gamma_0}(x)$, hence
\[
|\psi_{p,\gamma_0}(p)-\psi_{p,\gamma_0}(x)|=|\psi_-(p)-\psi_-(x)|\le\gamma_0|p-x|.
\]

Combining these estimates yields
\begin{equation}\label{eq:opp_side_est1}
|\psi_{p,\gamma_0}(y)-\psi_{p,\gamma_0}(x)|
\le\gamma_0(|y-p|+|p-x|)=\gamma_0|y-x|.
\end{equation}

\emph{Subcase 2.2: $y\le p\le x$.}
Similarly, split the difference:
\[
|\psi_{p,\gamma_0}(x)-\psi_{p,\gamma_0}(y)|
\le|\psi_{p,\gamma_0}(x)-\psi_{p,\gamma_0}(p)|+|\psi_{p,\gamma_0}(p)-\psi_{p,\gamma_0}(y)|.
\]

For $|\psi_{p,\gamma_0}(x)-\psi_{p,\gamma_0}(p)|$: since $p\le x$, on $[p,x]$ we use $\psi_+$, obtaining
\[
|\psi_{p,\gamma_0}(x)-\psi_{p,\gamma_0}(p)|\le\gamma_0|x-p|.
\]

For $|\psi_{p,\gamma_0}(p)-\psi_{p,\gamma_0}(y)|$: since $y\le p$, on $[y,p]$ we use $\psi_-$, obtaining
\[
|\psi_{p,\gamma_0}(p)-\psi_{p,\gamma_0}(y)|\le\gamma_0|p-y|.
\]

Hence
\begin{equation}\label{eq:opp_side_est2}
|\psi_{p,\gamma_0}(x)-\psi_{p,\gamma_0}(y)|
\le\gamma_0(|x-p|+|p-y|)=\gamma_0|x-y|.
\end{equation}

From Cases 1 and 2, we conclude that for all $x,y\in\mathbb{R}$,
\begin{equation}\label{eq:psi_lipschitz}
|\psi_{p,\gamma_0}(x)-\psi_{p,\gamma_0}(y)|\le\gamma_0|x-y|.
\end{equation}

Returning to the original notation, this is precisely
\[
\bigl|\tanh(\gamma_0|x-p|)-\tanh(\gamma_0|y-p|)\bigr|\le\gamma_0|x-y|\qquad(\forall x,y\in\mathbb{R}),
\]
so $\psi_{p,\gamma_0}$ is $\gamma_0$-Lipschitz continuous. \qed
\end{proof}

Now we proceed to estimate $G_f(s)$. For clarity, introduce the temporary notation
\begin{align*}
A_f(s)&:=\tanh\bigl(\gamma_0|f(s)-r(s)|\bigr),\\
B_f(s)&:=1+\beta_0\int_0^s e^{-\alpha_0(s-\tau)}\tanh\bigl(\gamma_0|f(\tau)-r(\tau)|\bigr)\,d\tau,
\end{align*}
so that $G_f(s)=A_f(s)/B_f(s)$. Analogous quantities $A_g(s)$ and $B_g(s)$ are defined with $g$ in place of $f$.

From (P1), we already know that $0\le A_f(s)<1$ and $0\le A_g(s)<1$. Moreover, as established in \eqref{eq:denom_lower_bound}, the positivity of the parameters ($\beta_0\ge0$, $e^{-\alpha_0(s-\tau)}\ge0$) together with $\tanh(\cdot)\ge0$ implies $B_f(s)\ge1$ and $B_g(s)\ge1$; in particular, both $B_f(s)$ and $B_g(s)$ are strictly positive.

Consider the difference $|G_f(s)-G_g(s)|$. By a common denominator,
\begin{align}
|G_f(s)-G_g(s)|
&=\left|\frac{A_f(s)}{B_f(s)}-\frac{A_g(s)}{B_g(s)}\right| \nonumber\\
&=\left|\frac{A_f(s)B_g(s)-A_g(s)B_f(s)}{B_f(s)B_g(s)}\right| \nonumber\\
&=\frac{|A_f(s)B_g(s)-A_g(s)B_f(s)|}{B_f(s)B_g(s)}. \label{eq:G_diff_common}
\end{align}
Since $B_f(s)$ and $B_g(s)$ are positive, the denominator $B_f(s)B_g(s)$ is positive and therefore equals its absolute value.

To estimate the numerator, decompose it algebraically:
\begin{align}
A_f(s)B_g(s)-A_g(s)B_f(s)
&=A_f(s)B_g(s)-A_f(s)B_f(s)+A_f(s)B_f(s)-A_g(s)B_f(s) \nonumber\\
&=A_f(s)[B_g(s)-B_f(s)]+B_f(s)[A_f(s)-A_g(s)]. \label{eq:G_diff_decomp}
\end{align}

Substituting \eqref{eq:G_diff_decomp} into \eqref{eq:G_diff_common} and applying the triangle inequality yields
\begin{equation}
|G_f(s)-G_g(s)|
\le\frac{A_f(s)\,|B_g(s)-B_f(s)|}{B_f(s)B_g(s)}
+\frac{B_f(s)\,|A_f(s)-A_g(s)|}{B_f(s)B_g(s)}. \label{eq:G_diff_estimate}
\end{equation}

We now estimate the two terms on the right-hand side separately.

\noindent\textbf{Estimate of the first term.}
Since $A_f(s)\in[0,1)$ by (P1), we have $A_f(s)\le1$. Together with $B_f(s),B_g(s)\ge1$ from \eqref{eq:denom_lower_bound}, we obtain
\[
\frac{A_f(s)\,|B_g(s)-B_f(s)|}{B_f(s)B_g(s)} \le |B_g(s)-B_f(s)|.
\]

Now estimate $|B_g(s)-B_f(s)|$. From the definition,
\begin{align}
|B_g(s)-B_f(s)|
&=\Bigl|\beta_0\int_0^s e^{-\alpha_0(s-\tau)}\tanh(\gamma_0|g(\tau)-r(\tau)|)\,d\tau \nonumber\\
&\qquad-\beta_0\int_0^s e^{-\alpha_0(s-\tau)}\tanh(\gamma_0|f(\tau)-r(\tau)|)\,d\tau\Bigr| \nonumber\\
&=\beta_0\Bigl|\int_0^s e^{-\alpha_0(s-\tau)}\bigl[\tanh(\gamma_0|g(\tau)-r(\tau)|)-\tanh(\gamma_0|f(\tau)-r(\tau)|)\bigr]d\tau\Bigr|. \label{eq:B_diff}
\end{align}

Applying the triangle inequality for integrals and using $e^{-\alpha_0(s-\tau)}\ge0$, we obtain
\[
|B_g(s)-B_f(s)|\le\beta_0\int_0^s e^{-\alpha_0(s-\tau)}\bigl|\tanh(\gamma_0|g(\tau)-r(\tau)|)-\tanh(\gamma_0|f(\tau)-r(\tau)|)\bigr|d\tau.
\]

For the integrand difference, apply Lemma~\ref{lem:tanh_abs_lipschitz} with $p=r(\tau)$. This yields
\[
\bigl|\tanh(\gamma_0|g(\tau)-r(\tau)|)-\tanh(\gamma_0|f(\tau)-r(\tau)|)\bigr|\le\gamma_0|g(\tau)-f(\tau)|=\gamma_0|f(\tau)-g(\tau)|.
\]

Substituting this estimate,
\begin{align}
|B_g(s)-B_f(s)|
&\le\beta_0\int_0^s e^{-\alpha_0(s-\tau)}\gamma_0|f(\tau)-g(\tau)|\,d\tau \nonumber\\
&=\beta_0\gamma_0\int_0^s e^{-\alpha_0(s-\tau)}|f(\tau)-g(\tau)|\,d\tau \nonumber\\
&\le\beta_0\gamma_0\|f-g\|_\infty\int_0^s e^{-\alpha_0(s-\tau)}d\tau, \label{eq:B_diff_estimate}
\end{align}
where $\|f-g\|_\infty:=\sup_{t\in I}|f(t)-g(t)|$. Computing the remaining integral,
\[
\int_0^s e^{-\alpha_0(s-\tau)}d\tau=\Bigl[\frac{e^{-\alpha_0(s-\tau)}}{\alpha_0}\Bigr]_{\tau=0}^{\tau=s}
=\frac{1-e^{-\alpha_0 s}}{\alpha_0}. \label{eq:exp_integral}
\]

Thus we obtain the estimate for the first term:
\begin{equation}
\frac{A_f(s)\,|B_g(s)-B_f(s)|}{B_f(s)B_g(s)}\le\beta_0\gamma_0\|f-g\|_\infty\cdot\frac{1-e^{-\alpha_0 s}}{\alpha_0}. \label{eq:first_term_estimate}
\end{equation}

\noindent\textbf{Estimate of the second term.}
From $B_f(s)\ge1$ and $B_g(s)\ge1$ established in \eqref{eq:denom_lower_bound}, we have
\[
\frac{B_f(s)}{B_f(s)B_g(s)}=\frac{1}{B_g(s)}\le1,
\]
and therefore
\[
\frac{B_f(s)\,|A_f(s)-A_g(s)|}{B_f(s)B_g(s)}\le|A_f(s)-A_g(s)|.
\]

For $|A_f(s)-A_g(s)|$, apply Lemma~\ref{lem:tanh_abs_lipschitz} with $p=r(s)$. This yields
\[
|A_f(s)-A_g(s)|
=\bigl|\tanh(\gamma_0|f(s)-r(s)|)-\tanh(\gamma_0|g(s)-r(s)|)\bigr|
\le\gamma_0|f(s)-g(s)|\le\gamma_0\|f-g\|_\infty.
\]

Hence the second term is bounded by
\begin{equation}
\frac{B_f(s)\,|A_f(s)-A_g(s)|}{B_f(s)B_g(s)}\le\gamma_0\|f-g\|_\infty. \label{eq:second_term_estimate}
\end{equation}

\noindent\textbf{Combining the two estimates.}
Substituting \eqref{eq:first_term_estimate} and \eqref{eq:second_term_estimate} into \eqref{eq:G_diff_estimate} yields
\begin{equation}
|G_f(s)-G_g(s)|
\le\beta_0\gamma_0\|f-g\|_\infty\cdot\frac{1-e^{-\alpha_0 s}}{\alpha_0}+\gamma_0\|f-g\|_\infty
=\gamma_0\left(1+\frac{\beta_0(1-e^{-\alpha_0 s})}{\alpha_0}\right)\|f-g\|_\infty. \label{eq:G_final_estimate}
\end{equation}

Define the function
\[
L_G(s):=\gamma_0\left(1+\frac{\beta_0(1-e^{-\alpha_0 s})}{\alpha_0}\right).
\]
Then for every $s\in I$ and any two trajectories $f,g\in\mathcal{C}(I)$,
\[
|G_f(s)-G_g(s)|\le L_G(s)\|f-g\|_\infty.
\]

We now examine the behaviour of $L_G(s)$ on the interval $I=[0,T]$. Since $\alpha_0>0$, $\beta_0\ge0$, and $\gamma_0>0$, the function $s\mapsto1-e^{-\alpha_0 s}$ is nonnegative and strictly increasing on $[0,\infty)$. Consequently, $L_G(s)$ is strictly increasing on $[0,\infty)$ (and constant only when $\beta_0=0$). In particular,
\begin{itemize}
    \item $L_G(0)=\gamma_0$;
    \item $L_G(s)$ increases with $s$;
    \item $\displaystyle\lim_{s\to\infty}L_G(s)=\gamma_0\left(1+\frac{\beta_0}{\alpha_0}\right)$.
\end{itemize}

On the finite interval $I=[0,T]$, monotonicity implies that the maximum of $L_G$ is attained at the right endpoint:
\begin{equation}
\max_{s\in[0,T]}L_G(s)=L_G(T)=\gamma_0\left(1+\frac{\beta_0(1-e^{-\alpha_0 T})}{\alpha_0}\right). \label{eq:LG_max}
\end{equation}

Set
\[
L_G:=\max_{s\in[0,T]}L_G(s)=\gamma_0\left(1+\frac{\beta_0(1-e^{-\alpha_0 T})}{\alpha_0}\right)<\infty.
\]
Then for all $s\in I$ and any $f,g\in\mathcal{C}(I)$,
\[
|G_f(s)-G_g(s)|\le L_G\|f-g\|_\infty.
\]

Finally, recalling that $\Lambda_f(s)=\lambda_{\min}+(\lambda_{\max}-\lambda_{\min})G_f(s)$ and similarly for $\Lambda_g(s)$, we obtain for any $s\in I$,
\[
|\Lambda_f(s)-\Lambda_g(s)|
=(\lambda_{\max}-\lambda_{\min})|G_f(s)-G_g(s)|
\le(\lambda_{\max}-\lambda_{\min})L_G\|f-g\|_\infty.
\]

Taking the supremum over $s\in I$ on the left-hand side yields the desired estimate in the supremum norm:
\begin{equation}
\|\Lambda_f-\Lambda_g\|_\infty \le L_\Lambda\|f-g\|_\infty, \label{eq:Lambda_Lipschitz_sup}
\end{equation}
where
\[
L_\Lambda:=(\lambda_{\max}-\lambda_{\min})L_G
=(\lambda_{\max}-\lambda_{\min})\gamma_0\left(1+\frac{\beta_0(1-e^{-\alpha_0 T})}{\alpha_0}\right).
\]

This completes the verification of (P2).

\noindent\textbf{Proof of (P3): continuity (hence measurability) in time.}
Fix an arbitrary trajectory $f\in\mathcal{C}(I)$. To establish the continuity of $s\mapsto\Lambda_f(s)$ on $I$, we introduce the auxiliary functions
\begin{align*}
\phi(s)&:=\tanh\bigl(\gamma_0|f(s)-r(s)|\bigr),\\
\Psi(s)&:=\int_0^s e^{-\alpha_0(s-\tau)}\tanh\bigl(\gamma_0|f(\tau)-r(\tau)|\bigr)\,d\tau,
\end{align*}
so that $\Lambda_f(s)=\lambda_{\min}+(\lambda_{\max}-\lambda_{\min})\dfrac{\phi(s)}{1+\beta_0\Psi(s)}$.

First, $\phi$ is continuous on $I$. Indeed, $f$ and $r$ are continuous by hypothesis, the map $z\mapsto|z|$ is continuous, and the hyperbolic tangent is continuous; therefore the composition $\phi(s)=\tanh(\gamma_0|f(s)-r(s)|)$ is continuous.

We now prove that $\Psi$ is continuous on $I$. Consider the function
\[
F(\tau,s):=e^{-\alpha_0(s-\tau)}\tanh\bigl(\gamma_0|f(\tau)-r(\tau)|\bigr)
\]
defined on the compact triangular region
\[
\triangle:=\{(\tau,s)\in I\times I\mid 0\le\tau\le s\le T\}.
\]
The exponential factor $e^{-\alpha_0(s-\tau)}$ is continuous on $\triangle$, and $\tanh(\gamma_0|f(\tau)-r(\tau)|)$ depends only on $\tau$ and is continuous. Hence $F$ is continuous on the compact set $\triangle$, and therefore uniformly continuous on $\triangle$. Moreover, from $0\le\tanh(\cdot)<1$ and $0<e^{-\alpha_0(s-\tau)}\le1$ we obtain the uniform bound
\begin{equation}\label{eq:F_bound}
|F(\tau,s)|\le1\qquad\forall(\tau,s)\in\triangle.
\end{equation}

Take an arbitrary $s_0\in I$ and let $s\in I$. To estimate $|\Psi(s)-\Psi(s_0)|$, we consider two cases.

\noindent\textbf{Case 1: $s\ge s_0$.}
Then
\begin{align}
|\Psi(s)-\Psi(s_0)|
&=\Bigl|\int_0^s F(\tau,s)\,d\tau-\int_0^{s_0}F(\tau,s_0)\,d\tau\Bigr| \nonumber\\
&\le \int_0^{s_0}|F(\tau,s)-F(\tau,s_0)|\,d\tau \;+\; \Bigl|\int_{s_0}^{s}F(\tau,s)\,d\tau\Bigr|. \label{eq:Psi_case1}
\end{align}

\noindent\textbf{Case 2: $s\le s_0$.}
Then
\begin{align}
|\Psi(s)-\Psi(s_0)|
&=\Bigl|\int_0^s F(\tau,s)\,d\tau-\int_0^{s_0}F(\tau,s_0)\,d\tau\Bigr| \nonumber\\
&\le \int_0^{s}|F(\tau,s)-F(\tau,s_0)|\,d\tau \;+\; \Bigl|\int_{s}^{s_0}F(\tau,s_0)\,d\tau\Bigr|. \label{eq:Psi_case2}
\end{align}

Given any $\varepsilon>0$, by the uniform continuity of $F$ on $\triangle$ there exists $\delta_1>0$ such that whenever $|s-s_0|<\delta_1$ and $(\tau,s),(\tau,s_0)\in\triangle$,
\begin{equation}\label{eq:F_uniform}
|F(\tau,s)-F(\tau,s_0)|<\frac{\varepsilon}{2T}.
\end{equation}
For such $s$, the first term in either \eqref{eq:Psi_case1} or \eqref{eq:Psi_case2} (with the integration limit taken as $s_{\min}:=\min\{s,s_0\}$) satisfies
\[
\int_0^{s_{\min}}|F(\tau,s)-F(\tau,s_0)|\,d\tau
\le\frac{\varepsilon}{2T}\cdot T=\frac{\varepsilon}{2}.
\]

For the second term we use \eqref{eq:F_bound}. If $|s-s_0|<\dfrac{\varepsilon}{2}$, then regardless of which case we are in,
\[
\Bigl|\int_{s_{\min}}^{s_{\max}}F(\tau,\max\{s,s_0\})\,d\tau\Bigr|
\le 1\cdot|s_{\max}-s_{\min}|=|s-s_0|<\frac{\varepsilon}{2},
\]
where $s_{\max}:=\max\{s,s_0\}$ and the integrand $F(\tau,\max\{s,s_0\})$ is understood as $F(\tau,s)$ when $s\ge s_0$, and as $F(\tau,s_0)$ when $s\le s_0$.

Choosing $\delta:=\min\{\delta_1,\varepsilon/2\}$, we obtain $|\Psi(s)-\Psi(s_0)|<\varepsilon$ whenever $|s-s_0|<\delta$. Hence $\Psi$ is continuous at $s_0$; since $s_0$ was arbitrary, $\Psi\in\mathcal{C}(I)$.

Because $\beta_0\ge0$ and $\Psi(s)\ge0$, the denominator $D(s):=1+\beta_0\Psi(s)$ satisfies $D(s)\ge1$ and is continuous. Consequently, $\Lambda_f(s)=\lambda_{\min}+(\lambda_{\max}-\lambda_{\min})\phi(s)/D(s)$ is a combination of continuous functions by addition, multiplication and division by a non‑vanishing continuous denominator, and therefore $\Lambda_f$ itself is continuous on $I$. Every continuous function is Lebesgue measurable, so (P3) is established.

\noindent\textbf{Proof of (P4): positivity at zero.}
Consider the zero trajectory $f\equiv0$. Then $f(s)=0$ and $f(\tau)=0$ for all $s,\tau\in I$, and
\[
\Lambda_0(s)=\lambda_{\min}+(\lambda_{\max}-\lambda_{\min})\,
\frac{\tanh\bigl(\gamma_0|r(s)|\bigr)}
{1+\beta_0\displaystyle\int_0^s e^{-\alpha_0(s-\tau)}\tanh\bigl(\gamma_0|r(\tau)|\bigr)\,d\tau}.
\]

We analyse the right‑hand side step by step.
\begin{enumerate}
    \item For any $z\ge0$, $\tanh(z)\in[0,1)$; in particular $\tanh(z)\ge0$.
    \item Hence $\tanh(\gamma_0|r(s)|)\ge0$ and $\tanh(\gamma_0|r(\tau)|)\ge0$ for all $s,\tau$.
    \item The exponential $e^{-\alpha_0(s-\tau)}$ is always positive, and $\beta_0\ge0$; therefore the integrand and the whole integral are non‑negative. Consequently,
    \begin{equation}\label{eq:denom_lower_bound_zero}
    1+\beta_0\int_0^s e^{-\alpha_0(s-\tau)}\tanh\bigl(\gamma_0|r(\tau)|\bigr)\,d\tau\;\ge\;1.
    \end{equation}
    \item From the above we obtain
    \[
    0\le\frac{\tanh\bigl(\gamma_0|r(s)|\bigr)}
          {1+\beta_0\displaystyle\int_0^s e^{-\alpha_0(s-\tau)}\tanh\bigl(\gamma_0|r(\tau)|\bigr)\,d\tau}<1.
    \]
\end{enumerate}

Since $0<\lambda_{\min}<\lambda_{\max}$, we have $\lambda_{\max}-\lambda_{\min}>0$, and therefore the fractional term multiplied by $(\lambda_{\max}-\lambda_{\min})$ is non‑negative. Thus
\begin{equation}\label{eq:Lambda0_lower_bound}
\Lambda_0(s)\ge\lambda_{\min}\qquad\forall s\in I.
\end{equation}

The constant $\lambda_{\min}>0$ serves as the required positive lower bound, which is independent of $s$. This completes the verification of (P4).
\end{proof}

\begin{corollary}[The purely instantaneous case $\boldsymbol{\beta_0=0}$]
\label{cor:beta_zero_case}
Under the assumptions of Theorem~\ref{thm:lambda_f_properties}, suppose further that $\beta_0=0$. Then the construction in Example~\ref{ex:historical_deviation_adaptive} reduces to a function $\Lambda\in\mathscr{A}(I)$ given explicitly by
\begin{equation}\label{eq:Lambda_pure_instantaneous}
\Lambda(s,x) = \lambda_{\min} + (\lambda_{\max} - \lambda_{\min}) \tanh\bigl(\gamma_0 |x - r(s)|\bigr), \qquad (s,x)\in I\times\mathbb{R}.
\end{equation}
Consequently, for every trajectory $f\in\mathcal{C}(I)$, the associated function $\Lambda_f$ defined in \eqref{eq:lambda_f} satisfies
\[
\Lambda_f(s) = \Lambda(s, f(s)) \quad \text{for all } s\in I.
\]

We now verify that $\Lambda$ indeed belongs to the class $\mathscr{A}(I)$ by checking conditions (AS1)--(AS4) of Definition~\ref{def:adaptive_sensitivity}.

\noindent\textbf{Verification of (AS1): uniform boundedness.}
For any $(s,x)\in I\times\mathbb{R}$, the elementary bound $0\le\tanh(z)<1$ for $z\ge0$ yields
\[
0\le\tanh\bigl(\gamma_0|x-r(s)|\bigr)<1.
\]
Multiplying by $(\lambda_{\max}-\lambda_{\min})>0$ and adding $\lambda_{\min}$ gives
\begin{equation}\label{eq:cor_AS1}
\lambda_{\min} \le \Lambda(s,x) \le \lambda_{\min} + (\lambda_{\max}-\lambda_{\min}) = \lambda_{\max},
\end{equation}
which establishes (AS1) with the same constants $\lambda_{\min},\lambda_{\max}$.

\noindent\textbf{Verification of (AS2): Lipschitz continuity in the state variable.}
Fix $s\in I$ and set $p=r(s)$. Applying Lemma~\ref{lem:tanh_abs_lipschitz} with this $p$, we have for any $x,y\in\mathbb{R}$,
\[
\bigl|\tanh(\gamma_0|x-p|)-\tanh(\gamma_0|y-p|)\bigr|\le\gamma_0|x-y|.
\]
Consequently,
\begin{align}
|\Lambda(s,x)-\Lambda(s,y)|
&= (\lambda_{\max}-\lambda_{\min})\,
\bigl|\tanh(\gamma_0|x-r(s)|)-\tanh(\gamma_0|y-r(s)|)\bigr| \nonumber\\
&\le (\lambda_{\max}-\lambda_{\min})\gamma_0|x-y|. \label{eq:cor_AS2}
\end{align}
Thus $\Lambda(s,\cdot)$ is Lipschitz continuous with constant $L_\Lambda = (\lambda_{\max}-\lambda_{\min})\gamma_0$, satisfying (AS2).

\noindent\textbf{Verification of (AS3): measurability in time.}
For each fixed $x\in\mathbb{R}$, consider the map $s\mapsto\Lambda(s,x)$. Since $r$ is continuous by hypothesis, the composition $s\mapsto\gamma_0|x-r(s)|$ is continuous; the hyperbolic tangent function is continuous, hence $s\mapsto\tanh(\gamma_0|x-r(s)|)$ is continuous. Adding the constant $\lambda_{\min}$ preserves continuity, so $s\mapsto\Lambda(s,x)$ is continuous on $I$. Every continuous function is Lebesgue measurable, therefore (AS3) holds.

\noindent\textbf{Verification of (AS4): positivity at zero.}
Setting $x=0$ in \eqref{eq:Lambda_pure_instantaneous} gives
\[
\Lambda(s,0) = \lambda_{\min} + (\lambda_{\max}-\lambda_{\min})\tanh\bigl(\gamma_0|r(s)|\bigr).
\]
Since $\tanh(\gamma_0|r(s)|)\ge0$ for all $s\in I$, we obtain
\begin{equation}\label{eq:cor_AS4}
\Lambda(s,0) \ge \lambda_{\min} > 0 \qquad \forall s\in I.
\end{equation}
Taking $\varrho := \lambda_{\min}$ (which is precisely the constant appearing in (AS1)), condition (AS4) is satisfied.

Having verified all four conditions (AS1)--(AS4), we conclude that $\Lambda$ belongs to the class $\mathscr{A}(I)$, thereby confirming that the purely instantaneous case $\beta_0=0$ is fully consistent with the framework of Definition~\ref{def:adaptive_sensitivity}. \qed
\end{corollary}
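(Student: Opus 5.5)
The plan has two parts. First, I check that setting $\beta_0=0$ collapses the construction of Example~\ref{ex:historical_deviation_adaptive} to the displayed $\Lambda$. Second, I verify (AS1)--(AS4) of Definition~\ref{def:adaptive_sensitivity} one at a time, reusing the tools already built in the proof of Theorem~\ref{thm:lambda_f_properties}.

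\textbf{Reduction.} With $\beta_0=0$ the denominator in \eqref{eq:lambda_f} equals $1$ for every $s$, whatever the integral is. Hence $\Lambda_f(s)=\lambda_{\min}+(\lambda_{\max}-\lambda_{\min})\tanh(\gamma_0|f(s)-r(s)|)$. Comparing this with \eqref{eq:Lambda_pure_instantaneous} evaluated at $x=f(s)$ gives $\Lambda_f(s)=\Lambda(s,f(s))$ pointwise, with nothing further to prove.

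\textbf{(AS1), (AS3), (AS4).} For (AS1), the bound $0\le\tanh z<1$ for $z\ge0$ together with $\lambda_{\max}-\lambda_{\min}>0$ gives $\lambda_{\min}\le\Lambda\le\lambda_{\max}$ on all of $I\times\mathbb{R}$; this is the same computation as (P1) with denominator $1$. For (AS3), fix $x$. Then $s\mapsto\tanh(\gamma_0|x-r(s)|)$ is a composition of continuous maps because $r\in\mathcal{C}(I)$, so it is continuous and hence Lebesgue measurable. For (AS4), the lower bound in (AS1) specialized to $x=0$ gives $\Lambda(s,0)\ge\lambda_{\min}$, so I take $\varrho=\lambda_{\min}$.

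\textbf{(AS2).} This is the only substantive step, and Lemma~\ref{lem:tanh_abs_lipschitz} already does the work. For each fixed $s$, apply it with $p=r(s)$ to get $|\tanh(\gamma_0|x-r(s)|)-\tanh(\gamma_0|y-r(s)|)|\le\gamma_0|x-y|$. Multiplying by $\lambda_{\max}-\lambda_{\min}$ yields a Lipschitz constant $(\lambda_{\max}-\lambda_{\min})\gamma_0$ that does not depend on $s$, which (AS2) requires. This constant agrees with $L_\Lambda$ of (P2) at $\beta_0=0$. The main point to watch is exactly that uniformity in $s$, and it holds because the lemma's constant depends only on $\gamma_0$ and not on $p$.

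Together these show $\Lambda\in\mathscr{A}(I)$.
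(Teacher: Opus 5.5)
Your proposal is correct and follows the paper's argument essentially step for step. The steps match: the reduction via the denominator collapsing to $1$, the bound $0\le\tanh z<1$ for (AS1), Lemma~\ref{lem:tanh_abs_lipschitz} with $p=r(s)$ giving the $s$-independent constant $(\lambda_{\max}-\lambda_{\min})\gamma_0$ for (AS2), continuity of $r$ for (AS3), and $\varrho=\lambda_{\min}$ for (AS4). Your only additions, the explicit remarks on uniformity in $s$ and on the pointwise identity $\Lambda_f(s)=\Lambda(s,f(s))$, make explicit what the paper leaves implicit.
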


\begin{remark}[Summary and perspective]
Theorem~\ref{thm:lambda_f_properties} establishes that for every $\beta_0\ge0$, the function $\Lambda_f$ generated by the historical deviation construction enjoys four fundamental properties. Properties (P1), (P3) and (P4) are direct analogues of the conditions (AS1), (AS3) and (AS4) that define the class $\mathscr{A}(I)$; they guarantee that each $\Lambda_f$ is uniformly bounded, continuous in time, and strictly positive at the zero trajectory. 

Property (P2), while playing a role analogous to (AS2) in the overall theory, takes a form adapted to the operator perspective: instead of asserting pointwise Lipschitz dependence on the instantaneous state value $f(s)$, it provides a global estimate in the supremum norm, namely $\|\Lambda_f-\Lambda_g\|_\infty\le L_\Lambda\|f-g\|_\infty$. This formulation expresses the fact that the map $f\mapsto\Lambda_f$ is Lipschitz continuous from $\mathcal{C}(I)$ into itself—a natural and powerful statement when one views the construction as a nonlinear operator acting on entire trajectories. The constant $L_\Lambda$ captures the combined influence of the parameters $\gamma_0$, $\beta_0$ and $\alpha_0$, and reduces to $(\lambda_{\max}-\lambda_{\min})\gamma_0$ in the purely instantaneous case $\beta_0=0$, consistently with Corollary~\ref{cor:beta_zero_case}.

Corollary~\ref{cor:beta_zero_case} examines the special case $\beta_0=0$, where the historical feedback vanishes. In this situation the operator perspective reverts to the classical function viewpoint: there exists a fixed $\Lambda\in\mathscr{A}(I)$ such that $\Lambda_f(s)=\Lambda(s,f(s))$ for every trajectory $f$. This observation not only confirms that the construction reproduces the expected instantaneous sensitivity when no historical modulation is present, but also illustrates the flexibility of the framework—the same formula \eqref{eq:lambda_f} interpolates continuously between a standard $\mathscr{A}(I)$-function ($\beta_0=0$) and a genuinely history-dependent operator ($\beta_0>0$). 

Together, Theorem~\ref{thm:lambda_f_properties} and Corollary~\ref{cor:beta_zero_case} demonstrate that the historical deviation example is both mathematically rich and perfectly compatible with the framework established in Definition~\ref{def:adaptive_sensitivity}. The operator viewpoint, with its Lipschitz estimate in the supremum norm, offers a natural language for describing memory mechanisms that depend on whole trajectories, while the special case $\beta_0=0$ seamlessly recovers the simpler instantaneous picture. This duality underscores the versatility of the proposed framework and its capacity to accommodate a wide spectrum of adaptive memory phenomena.

This framework thus provides a unified mathematical foundation for capturing nonlinear adaptive memory phenomena—including habituation, state-dependent weighting, and selective retention—that are central to understanding complex systems in fields such as neuroscience, adaptive control, and machine learning.
\end{remark}

\section{Adaptive Memory Sets: Functional Construction and Fundamental Properties}
\label{sec:adaptive_memory_sets}

The preceding sections have established two foundational pillars: a hierarchical classification of memory kernels $\kappa$ (Section~\ref{sec:kernel_theory}) that quantifies the temporal weighting of past states, and a framework for adaptive sensitivity functions $\Lambda$ (Section~\ref{sec:adaptive_sensitivity_framework}) that modulates this weight based on the state's value. The present section synthesizes these components to construct a novel collection of functions specifically tailored for systems exhibiting adaptive memory—a form of nonlinear behavior where the influence of past events depends not only on when they occurred but also on what their values were. This functional will serve as the fundamental analytical tool within this framework, thereby transforming the abstract theoretical setting into concrete functional-analytic machinery.

\subsection{Construction and Well-Posedness of Fundamental Memory Functions}
\label{subsec:construction_functions}

To quantify the cumulative influence of a function's past under the adaptive memory paradigm, we first introduce two auxiliary functions. Their definitions rely on the synergy between the kernel $\kappa$ and the sensitivity function $\Lambda$, and their mathematical legitimacy must be rigorously established through a systematic verification before they can be employed in the definition of the adaptive memory-dependent functional.

\begin{definition}[Adaptive weighted cumulative function and instantaneous-memory hybrid function]
\label{def:adaptive_basic_functions}
Let $\kappa \in \mathscr{K}_{\mathrm{reg}}$ be a regular admissible kernel (Definition~\ref{def:regular_admissible_kernel}) and let $\Lambda \in \mathscr{A}(I)$ be an adaptive sensitivity function (Definition~\ref{def:adaptive_sensitivity}). For any continuous trajectory $f \in \mathcal{C}(I)$, we define the following:
\begin{enumerate}
    \item[(i)] \textbf{Adaptive weighted cumulative function} $\mathcal{J}_f: I \to [0, \infty)$ by
    \begin{equation}
    \label{eq:J_f_definition}
    \mathcal{J}_f(t) := \int_0^t \Lambda\bigl(s, f(s)\bigr) \, \kappa(t-s) \, |f(s)| \, \mathrm{d}s, \qquad t \in I.
    \end{equation}
    This function quantifies the total weighted influence exerted on the present instant $t$ by all past states $f(s)$ for $0 \le s \le t$, accumulated through the adaptive memory weight $\Lambda(s,f(s))\kappa(t-s)$.
    
    \item[(ii)] \textbf{Instantaneous-memory hybrid function} $\mathcal{M}_f: I \to [0, \infty)$ by
    \begin{equation}
    \label{eq:M_f_definition}
    \mathcal{M}_f(t) := |f(t)| + \mathcal{J}_f(t), \qquad t \in I.
    \end{equation}
    This function simultaneously captures the system's instantaneous response $|f(t)|$ at time $t$ and the weighted historical accumulation $\mathcal{J}_f(t)$, thereby constituting the fundamental building block for the adaptive memory-dependent functional we seek to establish.
\end{enumerate}
\end{definition}

\begin{remark}[Conceptual rationale behind the terminology]
\label{rmk:function_terminology}
The nomenclature in Definition~\ref{def:adaptive_basic_functions} is carefully chosen to reflect both the mathematical structure and the underlying physical intuition. The term \textit{adaptive weighted cumulative} emphasizes that $\mathcal{J}_f$ is an integral (cumulative) measure, where the integrand is weighted by a factor that is simultaneously time-dependent (through $\kappa$) and state-dependent (through $\Lambda$). The designation \textit{instantaneous-memory hybrid} for $\mathcal{M}_f$ accurately conveys its nature as a composite of a purely local quantity $|f(t)|$ and a global, history-dependent quantity $\mathcal{J}_f(t)$. This clear conceptual separation, distinguishing between the accumulation mechanism and the combined instantaneous-historical measure, will prove invaluable in the subsequent analysis of the adaptive memory-dependent functional (see Definition~\ref{def:adaptive_memory_functional}) and related functional-analytic structures.

To illustrate the relevance of these constructions while respecting the standing regularity assumptions, one may interpret a trajectory $f(t)$ as, for instance, the firing rate of a neuron over time (or, more generally, a suitable continuous approximation thereof). The instantaneous term $|f(t)|$ captures the current level of neural activity, while the cumulative term $\mathcal{J}_f(t)$ aggregates past firing activity weighted by both the elapsed time (via $\kappa$) and the firing rates themselves (via $\Lambda$). The hybrid function $\mathcal{M}_f(t)$ therefore quantifies the combined influence of present and past activity, providing a mathematical description of neuronal adaptation—a nonlinear phenomenon where a neuron's response diminishes under sustained stimulation (habituation) or amplifies when exposed to novel or salient intermittent events (sensitization). This interpretation extends naturally to other domains: in ecological population dynamics, $f(t)$ may represent species abundance, with $\mathcal{M}_f(t)$ capturing the interplay between current population size and the historical constraints imposed by past population levels; in engineering signal processing, $f(t)$ may represent sensor readings, with $\mathcal{M}_f(t)$ encoding the balance between instantaneous measurements and historical trends. Such examples underscore how the abstract function $\mathcal{M}_f$ serves as a unified mathematical object for modeling state-dependent, history-sensitive adaptive behavior across a range of scientific disciplines.
\end{remark}

Before these functions can be utilized as building blocks for the adaptive memory-dependent functional, it is imperative to verify that they are mathematically well-defined for the class of functions for which they are intended. The following lemma establishes their essential properties through a systematic argument.

\begin{lemma}[Well-posedness of the fundamental memory functions]
\label{lem:functions_well_posed}
Let $f \in \mathcal{C}(I)$, $\kappa \in \mathscr{K}_{\mathrm{reg}}$, and $\Lambda \in \mathscr{A}(I)$. Then the functions introduced in Definition~\ref{def:adaptive_basic_functions} satisfy the following:
\begin{enumerate}
    \item[(i)] \textbf{Absolute convergence and finiteness:} For every $t \in I$, the integral defining $\mathcal{J}_f(t)$ converges absolutely as a Lebesgue integral. Consequently, $\mathcal{J}_f(t)$ is a finite, non-negative real number for all $t\in I$, and $\mathcal{M}_f(t)$ is likewise finite and non-negative.

    \item[(ii)] \textbf{Measurability:} The mappings $t \mapsto \mathcal{J}_f(t)$ and $t \mapsto \mathcal{M}_f(t)$ are Lebesgue measurable on $I$.

    \item[(iii)] \textbf{Uniform boundedness:} There exists a finite constant $C>0$, independent of the specific trajectory $f$ and the temporal argument $t$, such that
    \begin{equation}
    \label{eq:M_f_uniform_bound}
    \mathcal{M}_f(t) \le C \|f\|_{\infty}, \qquad \forall t\in I.
    \end{equation}
    More explicitly, one may take $C := 1 + \Lambda_{\infty} \kappa_{\infty} T$, where
    \begin{align}
    \Lambda_{\infty} &:= \sup_{(s,x)\in I\times\mathbb{R}} \Lambda(s,x) \le \lambda_{\mathrm{max}} < \infty, \label{eq:Lambda_sup_def}\\
    \kappa_{\infty} &:= \sup_{\tau\in I} \kappa(\tau) \le M_\kappa < \infty, \label{eq:kappa_sup_def}
    \end{align}
    with $\lambda_{\mathrm{max}}$ and $M_\kappa$ being the uniform bounds from conditions (AS1) and (R1), respectively. As established in the proof above, one also obtains the auxiliary estimate $\mathcal{J}_f(t) \le \Lambda_{\infty} \kappa_{\infty} T \|f\|_{\infty}$ for all $t\in I$.
\end{enumerate}
\end{lemma}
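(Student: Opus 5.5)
The plan is to first establish measurability of the integrand $s\mapsto \Lambda(s,f(s))\kappa(t-s)|f(s)|$ on $[0,t]$ for each fixed $t$, and then derive (i)--(iii) from one pointwise bound.

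\textbf{Measurability of the integrand.} The only delicate factor is $s\mapsto\Lambda(s,f(s))$, since (AS3) gives measurability in $s$ only for fixed $x$. I would treat $\Lambda$ as a Carathéodory function: it is measurable in $s$ by (AS3) and Lipschitz, hence continuous, in $x$ by (AS2).
\begin{itemize}
\item Approximate $f$ uniformly by step functions $f_n$ taking finitely many values on finitely many subintervals; this is possible by uniform continuity of $f$ on the compact $I$.
\item For each $n$, the map $s\mapsto\Lambda(s,f_n(s))$ is a finite sum of products $\Lambda(s,c_k)\mathbf 1_{E_k}(s)$, so it is measurable.
\item By (AS2), $|\Lambda(s,f_n(s))-\Lambda(s,f(s))|\le L_\Lambda\|f_n-f\|_\infty\to0$. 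Hence $s\mapsto\Lambda(s,f(s))$ is a pointwise (indeed uniform) limit of measurable functions and is therefore measurable.
\end{itemize}
The factor $\kappa(t-\cdot)$ is continuous by (R3), and $|f|$ is continuous. So the integrand is a product of measurable functions and is measurable. I expect this step to be the main obstacle; the rest is routine.

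\textbf{Parts (i) and (iii).} By (AS1) and (R1), the integrand is nonnegative and satisfies
\[
0\le\Lambda(s,f(s))\kappa(t-s)|f(s)|\le\Lambda_\infty\kappa_\infty\|f\|_\infty .
\]
Integrating over $[0,t]\subset[0,T]$ gives absolute convergence, finiteness, and nonnegativity of $\mathcal J_f(t)$, together with
\[
\mathcal J_f(t)\le\Lambda_\infty\kappa_\infty T\|f\|_\infty .
\]
Adding $|f(t)|\le\|f\|_\infty$ yields $\mathcal M_f(t)\le(1+\Lambda_\infty\kappa_\infty T)\|f\|_\infty$, which is (iii) with $C=1+\Lambda_\infty\kappa_\infty T$. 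This $C$ is independent of $f$ and $t$. Using the normalization in Lemma~\ref{lem:kernel_integral_estimates} would give a sharper constant $1+\Lambda_\infty$, but the stated $C$ suffices.

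\textbf{Part (ii).} I would prove the stronger statement that $\mathcal J_f$ is continuous. Set $h(s)=\Lambda(s,f(s))|f(s)|$, which is bounded and measurable. For $t_1<t_2$,
\[
|\mathcal J_f(t_2)-\mathcal J_f(t_1)|\le\int_0^{t_1}|h(s)|\,|\kappa(t_2-s)-\kappa(t_1-s)|\,ds+\int_{t_1}^{t_2}|h(s)|\kappa(t_2-s)\,ds .
\]
Using (R3) on the first term and (R1) on the second, this is at most
\[
\Lambda_\infty\|f\|_\infty\,(L_\kappa T+\kappa_\infty)\,|t_2-t_1| .
\]
So $\mathcal J_f$ is Lipschitz, hence measurable. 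Then $\mathcal M_f=|f|+\mathcal J_f$ is continuous and therefore measurable.
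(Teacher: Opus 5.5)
Your proof is correct. Parts (i) and (iii) follow the paper's argument: the paper dominates the integrand by $\lambda_{\max}\|f\|_\infty\kappa(t-s)$ and uses $\kappa\in L^1(I)$, while you use the constant bound $\Lambda_\infty\kappa_\infty\|f\|_\infty$, and both give the same $C$.

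The genuine differences concern measurability.

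\textbf{Measurability of $s\mapsto\Lambda(s,f(s))$.} The paper invokes the general fact that a Carath\'eodory function is $\mathcal{L}(I)\otimes\mathcal{B}(\mathbb{R})$-measurable, then composes with $s\mapsto(s,f(s))$. Your uniform step-function approximation, combined with (AS3) and (AS2), proves exactly what is needed from scratch.

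\textbf{Part (ii).} The paper builds $\mathcal{G}(t,s)=\Lambda(s,f(s))\kappa(t-s)|f(s)|\mathbf{1}_{[0,t]}(s)$ on $I\times I$, checks product measurability factor by factor, and concludes via Fubini--Tonelli. You instead use (R1) and (R3) to show that $\mathcal{J}_f$ is Lipschitz with constant $\Lambda_\infty\|f\|_\infty(L_\kappa T+\kappa_\infty)$. Your route has two advantages:
\begin{itemize}
\item It yields more. $\mathcal{M}_f$ is continuous on the compact interval $I$, so the supremum defining $S_{\kappa,\Lambda}(f)$ is actually attained.
\item It is logically tidier. You establish measurability of the integrand before (i) uses Lebesgue integrability; the paper asserts integrability in (i) and verifies measurability only afterwards, in (ii).
\end{itemize}
The paper's Tonelli argument needs only joint measurability of $(t,s)\mapsto\kappa(t-s)$, not a modulus of continuity for $\kappa$. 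It therefore adapts to kernels without (R3), such as the finite-memory kernel of Example~\ref{ex:finite_memory_kernel}. For such kernels your Lipschitz estimate breaks down, and continuity of $\mathcal{J}_f$ would need a different argument, e.g.\ continuity of translation in $L^1$.

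Your remark that normalization gives the sharper constant $1+\Lambda_\infty$ is also correct. Indeed, $1=\int_0^T\kappa\le\kappa_\infty T$, so $1+\Lambda_\infty\le 1+\Lambda_\infty\kappa_\infty T$.
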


\begin{proof}
We establish each property through a systematic and self-contained argument.

\noindent\textbf{Proof of (i): absolute convergence and finiteness.}
Since $I=[0,T]$ is compact and $f$ is continuous, $f$ is bounded on $I$. Denote
\begin{equation}
\label{eq:f_sup_norm}
M := \|f\|_{\infty} = \sup_{s\in I} |f(s)| < \infty.
\end{equation}
From the uniform boundedness condition (AS1) of Definition~\ref{def:adaptive_sensitivity}, there exists a constant $\lambda_{\mathrm{max}}>0$ such that
\begin{equation}
\label{eq:Lambda_bound_pointwise}
|\Lambda(s, f(s))| \le \lambda_{\mathrm{max}}, \qquad \forall s\in I.
\end{equation}

Consider the dominating function for the integrand in \eqref{eq:J_f_definition}. For any $t\in I$ and $s\in[0,t]$, we have the pointwise estimate
\begin{equation}
\label{eq:integrand_control}
|\Lambda(s, f(s)) \kappa(t-s) |f(s)|| \le \lambda_{\mathrm{max}} M \cdot \kappa(t-s).
\end{equation}
The kernel $\kappa$ belongs to $\mathscr{K}_{\mathrm{reg}}$, hence by condition (R2) we have $\kappa\in L^1(I)$ and specifically $\int_0^T\kappa(\tau)\,\mathrm{d}\tau = 1$. Consequently, for each fixed $t$, the function $s\mapsto \kappa(t-s)$ is Lebesgue integrable on $[0,t]$. By the comparison test for Lebesgue integrals, the integral
\[
\int_0^t |\Lambda(s, f(s)) \kappa(t-s) |f(s)|| \, \mathrm{d}s
\]
converges, implying that $\mathcal{J}_f(t)$ is absolutely convergent and therefore finite. Non-negativity follows directly from $\kappa(\tau)\ge 0$ (condition (R1)) and $\Lambda(s,x)\ge 0$ (condition (AS1)). The finiteness and non-negativity of $\mathcal{M}_f(t)$ are then immediate from its definition \eqref{eq:M_f_definition}.

\noindent\textbf{Proof of (ii): measurability.}

We establish the measurability of $\mathcal{J}_f$ through a product-measurability argument, which provides a rigorous foundation for applying Fubini-type theorems. Define the auxiliary function $\mathcal{G}: I\times I \to [0, \infty)$ by
\begin{equation}
\label{eq:G_function}
\mathcal{G}(t,s) := \Lambda(s, f(s))\; \kappa(t-s)\; |f(s)|\; \mathbf{1}_{[0,t]}(s),
\end{equation}
where $\mathbf{1}_{[0,t]}$ denotes the indicator function of the interval $[0,t]$.

Let $\mathcal{L}(I)$ denote the $\sigma$-algebra of Lebesgue measurable sets on $I$, and let $\mathcal{L}(I)\otimes\mathcal{L}(I)$ be the corresponding product $\sigma$-algebra. We proceed by verifying the product measurability of each factor constituting $\mathcal{G}$.

\begin{enumerate}
    \item[(a)] \textbf{Measurability of $(t,s)\mapsto \Lambda(s,f(s))$:}
    Consider the mapping $\Phi: I \to I\times\mathbb{R}$ defined by $\Phi(s) = (s, f(s))$. Since $f$ is continuous, $\Phi$ is continuous and therefore Borel measurable. The function $\Lambda: I\times\mathbb{R}\to[0,\infty)$ is a Carathéodory function: by condition (AS2) in Definition~\ref{def:adaptive_sensitivity}, $x\mapsto\Lambda(s,x)$ is Lipschitz continuous (hence continuous) for each fixed $s$; by condition (AS3) in Definition~\ref{def:adaptive_sensitivity}, $s\mapsto\Lambda(s,x)$ is Lebesgue measurable for each fixed $x$. A Carathéodory function is jointly measurable with respect to the product $\sigma$-algebra $\mathcal{L}(I)\otimes\mathcal{B}(\mathbb{R})$, where $\mathcal{B}(\mathbb{R})$ denotes the Borel $\sigma$-algebra on $\mathbb{R}$. Consequently, the composition $H(s):=\Lambda(s,f(s)) = \Lambda\circ\Phi(s)$ is Lebesgue measurable as the composition of a measurable function $\Lambda$ with a Borel measurable function $\Phi$. The extension $\tilde H(t,s):=H(s)$ is then product measurable, since for any $a\in\mathbb{R}$,
    \[
    \{(t,s): \tilde H(t,s)\le a\} = I \times \{s: H(s)\le a\} \in \mathcal{L}(I)\otimes\mathcal{L}(I).
    \]
    
    \item[(b)] \textbf{Measurability of $(t,s)\mapsto \kappa(t-s)$:}
    Condition (R3) in Definition~\ref{def:regular_admissible_kernel} guarantees that $\kappa$ is Lipschitz continuous on $I$, hence continuous. The map $(t,s)\mapsto t-s$ is continuous, and the composition of continuous functions preserves continuity; therefore $(t,s)\mapsto\kappa(t-s)$ is continuous on $I\times I$, and consequently Borel measurable (hence $\mathcal{L}(I)\otimes\mathcal{L}(I)$-measurable).
    
    \item[(c)] \textbf{Measurability of $(t,s)\mapsto |f(s)|$:}
    The function $f$ is continuous, so $|f|$ is continuous. As in part (a), the extension $\tilde F(t,s):=|f(s)|$ is product measurable.
    
    \item[(d)] \textbf{Measurability of $(t,s)\mapsto \mathbf{1}_{[0,t]}(s)$:}
    Define the set $E:=\{(t,s)\in I\times I : 0\le s\le t\}$. The function $\phi(t,s)=t-s$ is continuous, hence $E=\phi^{-1}([0,\infty))$ is a closed set and therefore Borel measurable. Its characteristic function $\chi_E(t,s)=\mathbf{1}_{[0,t]}(s)$ is consequently Borel measurable and belongs to $\mathcal{L}(I)\otimes\mathcal{L}(I)$.
\end{enumerate}

All four factors are non-negative and $\mathcal{L}(I)\otimes\mathcal{L}(I)$-measurable; hence their product $\mathcal{G}$ is also $\mathcal{L}(I)\otimes\mathcal{L}(I)$-measurable and non-negative.

With $\mathcal{G}$ being non-negative and product-measurable, we can legitimately invoke the Fubini--Tonelli theorem for non-negative measurable functions. This classical result guarantees that:
\begin{itemize}
    \item For almost every (in fact, every) $t\in I$, the section $s\mapsto\mathcal{G}(t,s)$ is $\mathcal{L}(I)$-measurable;
    \item The function defined by integrating over the second variable,
    \[
    t \mapsto \int_I \mathcal{G}(t,s)\,\mathrm{d}s = \int_0^t \Lambda(s,f(s))\,\kappa(t-s)\,|f(s)|\,\mathrm{d}s = \mathcal{J}_f(t),
    \]
    is $\mathcal{L}(I)$-measurable on $I$ (taking values in $[0,\infty)$).
\end{itemize}

Finally, the continuity of $f$ implies that $t\mapsto |f(t)|$ is continuous and therefore $\mathcal{L}(I)$-measurable. The sum
\[
\mathcal{M}_f(t) = |f(t)| + \mathcal{J}_f(t)
\]
of two $\mathcal{L}(I)$-measurable functions remains $\mathcal{L}(I)$-measurable, completing the proof of part (ii).

\noindent\textbf{Proof of (iii): uniform boundedness.}

From the uniform boundedness condition (R1) for regular kernels, we have $\kappa(\tau)\le M_\kappa$ for all $\tau\in I$, where $M_\kappa$ is the constant appearing in Definition~\ref{def:regular_admissible_kernel}. Define the supremum norm of $\kappa$ on $I$ as
\begin{equation}
\label{eq:kappa_sup}
\kappa_{\infty} := \sup_{\tau\in I}\kappa(\tau) \le M_\kappa < \infty.
\end{equation}
Similarly, condition (AS1) for adaptive sensitivity functions (Definition~\ref{def:adaptive_sensitivity}) yields $\Lambda(s,x)\le \lambda_{\mathrm{max}}$ for all $(s,x)\in I\times\mathbb{R}$. Set
\begin{equation}
\label{eq:Lambda_sup}
\Lambda_{\infty} := \sup_{(s,x)\in I\times\mathbb{R}}\Lambda(s,x) \le \lambda_{\mathrm{max}} < \infty.
\end{equation}

For any $t\in I$, using the non-negativity of $\kappa$ and the estimate $\kappa(t-s)\le \kappa_{\infty}$, we obtain
\[
\int_0^t \kappa(t-s)\,\mathrm{d}s \le \int_0^t \kappa_{\infty}\,\mathrm{d}s = \kappa_{\infty} t \le \kappa_{\infty} T.
\]

Combining these estimates, we bound the cumulative function $\mathcal{J}_f$ as follows:
\[
\mathcal{J}_f(t) \le \int_0^t \Lambda_{\infty} \kappa(t-s) |f(s)|\,\mathrm{d}s
\le \Lambda_{\infty} \|f\|_{\infty} \int_0^t \kappa(t-s)\,\mathrm{d}s
\le \Lambda_{\infty} \|f\|_{\infty} \kappa_{\infty} T.
\]

Consequently, for the hybrid function $\mathcal{M}_f$, we have
\begin{align}
\mathcal{M}_f(t) &= |f(t)| + \mathcal{J}_f(t) \nonumber\\
&\le \|f\|_{\infty} + \Lambda_{\infty} \kappa_{\infty} T \|f\|_{\infty} \nonumber\\
&= \bigl(1 + \Lambda_{\infty} \kappa_{\infty} T\bigr) \|f\|_{\infty}. \label{eq:M_f_bound}
\end{align}

Taking $C := 1 + \Lambda_{\infty} \kappa_{\infty} T$, which is finite by virtue of \eqref{eq:kappa_sup} and \eqref{eq:Lambda_sup}, establishes the desired uniform estimate \eqref{eq:M_f_uniform_bound}. The bound for $\mathcal{J}_f$ follows directly from the chain of inequalities above. \qed
\end{proof}

\begin{remark}[Significance of the well-posedness results]
\label{rmk:well_posed_significance}
Lemma~\ref{lem:functions_well_posed} provides a foundation for the subsequent theory. It shows that the auxiliary objects $\mathcal{J}_f$ and $\mathcal{M}_f$—which aim to capture the interplay between instantaneous magnitude, temporal weighting, and state-dependent sensitivity—are not merely formal constructions but objects with clear mathematical meaning. Absolute convergence ensures that no hidden cancellations or singularities affect the definition; measurability renders these functions amenable to integration and further functional-analytic operations, potentially allowing their incorporation into Lebesgue spaces; uniform boundedness offers an initial indication that $\mathcal{M}_f$ relates to the trajectory $f$ in a manner proportional to its supremum norm, with the proportionality constant $C$ depending only on the intrinsic parameters of the kernel and the sensitivity function. Collectively, these properties establish $\mathcal{M}_f$ as a mathematically viable object for quantifying adaptive memory. In particular, the state-dependent nature of $\Lambda$ within $\mathcal{J}_f$ introduces a nonlinear coupling between the trajectory $f$ and its own history, a feature that allows the resulting framework to capture adaptive phenomena such as habituation and selective retention.
\end{remark}

\subsection{Definition and Basic Properties of the Adaptive Memory-Dependent Functional}
\label{subsec:adaptive_functional_definition}

With the well-posedness of the fundamental functions $\mathcal{J}_f$ and $\mathcal{M}_f$ rigorously established in Lemma~\ref{lem:functions_well_posed}, we are now in a position to introduce the central object of our construction—the adaptive memory-dependent functional. Defined through a supremum operation, this functional provides a quantitative measure that integrates both instantaneous magnitude and adaptively weighted historical accumulation. Its basic properties, which are essential for the subsequent functional-analytic developments, are systematically examined below.

\begin{definition}[Adaptive memory-dependent functional]
\label{def:adaptive_memory_functional}
Let $\kappa \in \mathscr{K}_{\mathrm{reg}}$ be a regular admissible kernel (Definition~\ref{def:regular_admissible_kernel}) and let $\Lambda \in \mathscr{A}(I)$ be an adaptive sensitivity function (Definition~\ref{def:adaptive_sensitivity}). For any continuous trajectory $f \in \mathcal{C}(I)$, the \textbf{adaptive memory-dependent functional} is defined by
\begin{equation}
\label{eq:adaptive_functional_definition}
S_{\kappa,\Lambda}(f) := \sup_{t \in I} \mathcal{M}_f(t) \in [0, \infty),
\end{equation}
where $\mathcal{M}_f$ is the instantaneous-memory hybrid function introduced in Definition~\ref{def:adaptive_basic_functions}. The uniform boundedness established in Lemma~\ref{lem:functions_well_posed}.(iii) guarantees that this supremum is a finite nonnegative real number.
\end{definition}

\begin{lemma}[Basic properties of the adaptive memory-dependent functional]
\label{lem:adaptive_functional_properties}
Let $f \in \mathcal{C}(I)$. The functional $S_{\kappa,\Lambda}(f)$ defined in \eqref{eq:adaptive_functional_definition} satisfies the following fundamental properties:
\begin{enumerate}
    \item[(i)] \textbf{Existence and well-posedness:} The quantity $S_{\kappa,\Lambda}(f)$ exists as a real number and is uniquely determined.
    
    \item[(ii)] \textbf{Controlled boundedness via intrinsic parameters:} 
    Recalling the uniform bounds 
    \[
    \Lambda_{\infty} := \sup_{(s,x)\in I\times\mathbb{R}} \Lambda(s,x) \quad \text{and} \quad 
    \kappa_{\infty} := \sup_{\tau\in I} \kappa(\tau)
    \]
    introduced in Lemma~\ref{lem:functions_well_posed} (see \eqref{eq:Lambda_sup_def} and \eqref{eq:kappa_sup_def}), we have the estimate
    \begin{equation}
    \label{eq:functional_controlled_bound}
    S_{\kappa,\Lambda}(f) \le \bigl(1 + \Lambda_{\infty} \kappa_{\infty} T\bigr) \|f\|_{\infty}.
    \end{equation}
    Conditions (AS1) and (R1) ensure $\Lambda_{\infty} \le \lambda_{\mathrm{max}} < \infty$ and $\kappa_{\infty} \le M_\kappa < \infty$, so the controlling constant is finite.
    
    \item[(iii)] \textbf{Positive definiteness:} $S_{\kappa,\Lambda}(f) \ge 0$ for every $f\in\mathcal{C}(I)$. Moreover,
    \[
    S_{\kappa,\Lambda}(f) = 0 \quad \text{if and only if} \quad f \equiv 0 \text{ on } I.
    \]
    
       \item[(iv)] \textbf{Comparison with the classical supremum norm:} 
    The adaptive memory-dependent functional dominates the classical supremum norm, i.e.,
    \begin{equation}
    \label{eq:functional_comparison}
    \|f\|_{\infty} \le S_{\kappa,\Lambda}(f) \qquad \forall f\in\mathcal{C}(I).
    \end{equation}
\end{enumerate}
\end{lemma}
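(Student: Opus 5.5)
The plan is to derive all four properties from Lemma~\ref{lem:functions_well_posed} and the pointwise inequality $\mathcal{M}_f(t)\ge|f(t)|$.

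For (i), Lemma~\ref{lem:functions_well_posed}(i) shows that $\{\mathcal{M}_f(t):t\in I\}$ is a nonempty set of finite nonnegative reals. Lemma~\ref{lem:functions_well_posed}(iii) shows it is bounded above by $C\|f\|_\infty$. The completeness axiom of $\mathbb{R}$ then gives a supremum that is a real number, and it is unique because a least upper bound is unique.

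For (ii), take the supremum over $t\in I$ of the bound $\mathcal{M}_f(t)\le(1+\Lambda_\infty\kappa_\infty T)\|f\|_\infty$ from \eqref{eq:M_f_uniform_bound}. The right-hand side does not depend on $t$, so the estimate passes to the supremum. The finiteness of $\Lambda_\infty$ and $\kappa_\infty$ comes from (AS1) and (R1), as already recorded in \eqref{eq:Lambda_sup_def}--\eqref{eq:kappa_sup_def}.

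I will prove (iv) before (iii), since (iii) follows from it. Because $\Lambda\ge0$, $\kappa\ge0$ and $|f|\ge0$, the integrand of $\mathcal{J}_f(t)$ is nonnegative, so $\mathcal{J}_f(t)\ge0$. Hence $\mathcal{M}_f(t)\ge|f(t)|$ for every $t\in I$, and therefore $S_{\kappa,\Lambda}(f)\ge|f(t)|$ for every $t$. Taking the supremum in $t$ gives $S_{\kappa,\Lambda}(f)\ge\|f\|_\infty$. Continuity of $f$ on the compact set $I$ is not needed here, although it does ensure that $\|f\|_\infty$ is attained.

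For (iii), nonnegativity follows from (iv), or directly from $\mathcal{M}_f\ge0$. If $f\equiv0$, then $|f(t)|=0$ and the integrand of $\mathcal{J}_f(t)$ vanishes identically, so $\mathcal{M}_f\equiv0$ and $S_{\kappa,\Lambda}(f)=0$. Conversely, if $S_{\kappa,\Lambda}(f)=0$, then (iv) gives $\|f\|_\infty\le0$, so $f\equiv0$ on $I$. In this direction the supremum norm controls every point because $f$ is defined pointwise and continuous, so no almost-everywhere caveat arises.

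There is no genuine obstacle in this argument. The only point needing care is the lower bound $\mathcal{J}_f(t)\ge0$, which relies on the sign conditions (AS1) and (R1). It is worth noting explicitly that the positive definiteness in (iii) does not require the positivity-at-zero condition (AS4); it follows from the instantaneous term $|f(t)|$ alone.
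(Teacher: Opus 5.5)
Your proposal is correct and follows the paper's proof essentially step by step. Parts (i)--(ii) use completeness of $\mathbb{R}$ and the bounds of Lemma~\ref{lem:functions_well_posed}, and parts (iii)--(iv) use the pointwise inequality $\mathcal{M}_f(t)\ge|f(t)|$; the only difference is that you deduce the reverse implication in (iii) from (iv), whereas the paper argues directly that $\sup_t\mathcal{M}_f(t)=0$ forces each $\mathcal{M}_f(t)$, and hence each $|f(t)|$, to vanish, which is the same inequality applied in a different order.
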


\begin{proof}
We establish each property through a systematic argument that leverages the results of Lemma~\ref{lem:functions_well_posed} while remaining self-contained.

\noindent\textbf{Proof of (i): existence and well-posedness.}
From Lemma~\ref{lem:functions_well_posed}.(i), the function $\mathcal{M}_f$ is well-defined pointwise and takes values in $[0,\infty)$. Specifically, for each $t\in I$, the quantity $\mathcal{M}_f(t)$ is a finite nonnegative real number. Hence the set
\[
S_f := \{\mathcal{M}_f(t) : t \in I\} \subset [0, \infty)
\]
is nonempty. Lemma~\ref{lem:functions_well_posed}.(iii) provides the uniform bound
\begin{equation}
\label{eq:uniform_bound_for_S_f}
\mathcal{M}_f(t) \le (1 + \Lambda_{\infty} \kappa_{\infty} T) \|f\|_{\infty} \quad \text{for all } t\in I,
\end{equation}
which implies that $S_f$ is bounded above. By the completeness axiom of the real numbers, every nonempty subset of $\mathbb{R}$ that is bounded above possesses a unique supremum. Consequently, the quantity $\sup S_f$ exists as a real number and is uniquely determined. Defining $S_{\kappa,\Lambda}(f) := \sup S_f$ therefore yields a well-defined real number. Its non-negativity follows immediately from the fact that every element of $S_f$ is nonnegative, so the supremum of a set of nonnegative numbers is itself nonnegative.

\noindent\textbf{Proof of (ii): controlled boundedness.}
Recall that $S_{\kappa,\Lambda}(f)$ is by definition the least upper bound of the set $S_f = \{\mathcal{M}_f(t) : t\in I\}$. A fundamental property of the supremum is that it cannot exceed any upper bound of the set. From Lemma~\ref{lem:functions_well_posed}.(iii), we have the explicit upper bound
\[
\mathcal{M}_f(t) \le (1 + \Lambda_{\infty} \kappa_{\infty} T) \|f\|_{\infty} \quad \text{for every } t\in I.
\]
Thus the quantity $(1 + \Lambda_{\infty} \kappa_{\infty} T) \|f\|_{\infty}$ serves as an upper bound for the set $S_f$. Applying the aforementioned property of the supremum yields
\begin{equation}
\label{eq:functional_bound_derivation}
S_{\kappa,\Lambda}(f) = \sup_{t\in I} \mathcal{M}_f(t) \le (1 + \Lambda_{\infty} \kappa_{\infty} T) \|f\|_{\infty},
\end{equation}
which is precisely the estimate \eqref{eq:functional_controlled_bound}.

\noindent\textbf{Proof of (iii): positive definiteness.}
We establish the two directions of the equivalence separately.

\noindent\textit{Non-negativity:} For any $f\in\mathcal{C}(I)$ and any $t\in I$, Lemma~\ref{lem:functions_well_posed}.(i) guarantees that $\mathcal{M}_f(t) \ge 0$. The set $S_f = \{\mathcal{M}_f(t): t\in I\}$ therefore consists entirely of nonnegative numbers. The supremum of a set of nonnegative numbers is itself nonnegative, yielding $S_{\kappa,\Lambda}(f) = \sup S_f \ge 0$.

\noindent\textit{Forward implication ($f\equiv 0 \Rightarrow S_{\kappa,\Lambda}(f) = 0$):} 
Assume that $f$ is identically zero on $I$, i.e., $f(t)=0$ for all $t\in I$. Then for any $t\in I$, we have $|f(t)| = 0$. Moreover, from the definition \eqref{eq:J_f_definition} of $\mathcal{J}_f$, the integrand $\Lambda(s,0)\kappa(t-s)|0|$ vanishes for every $s\in[0,t]$, so $\mathcal{J}_f(t) = 0$. Consequently, $\mathcal{M}_f(t) = |f(t)| + \mathcal{J}_f(t) = 0$ for all $t\in I$. Thus $S_f = \{0\}$, and its supremum is $\sup\{0\} = 0$. Hence $S_{\kappa,\Lambda}(f) = 0$.

\noindent\textit{Reverse implication ($S_{\kappa,\Lambda}(f) = 0 \Rightarrow f\equiv 0$):}
Suppose that $S_{\kappa,\Lambda}(f) = 0$. Since $S_{\kappa,\Lambda}(f)$ is defined as the supremum of the set $S_f = \{\mathcal{M}_f(t): t\in I\}$, and all elements of $S_f$ are nonnegative, the condition $\sup S_f = 0$ forces every element of $S_f$ to be zero. Indeed, if there existed some $t_0\in I$ with $\mathcal{M}_f(t_0) > 0$, then the supremum would be at least $\mathcal{M}_f(t_0) > 0$, contradicting $S_{\kappa,\Lambda}(f) = 0$. Therefore $\mathcal{M}_f(t) = 0$ for every $t\in I$.

Now fix an arbitrary $t\in I$. From the definition $\mathcal{M}_f(t) = |f(t)| + \mathcal{J}_f(t)$ and the non-negativity of both terms, we have
\[
0 = \mathcal{M}_f(t) = |f(t)| + \mathcal{J}_f(t) \ge |f(t)| \ge 0.
\]
The chain of inequalities forces $|f(t)| = 0$, which implies $f(t) = 0$. Since $t\in I$ was chosen arbitrarily, we conclude that $f(t)=0$ for all $t\in I$, i.e., $f \equiv 0$ on $I$. This completes the proof of the reverse implication, and together with the forward implication establishes the equivalence
\begin{equation}
\label{eq:positive_definitiveness}
S_{\kappa,\Lambda}(f) = 0 \quad \text{if and only if} \quad f \equiv 0 \text{ on } I.
\end{equation}

\noindent\textbf{Proof of (iv): comparison with the classical supremum norm.}
We first establish the pointwise inequality that underlies the comparison. For any $t\in I$, the definition of $\mathcal{M}_f$ from Definition~\ref{def:adaptive_basic_functions} gives
\[
\mathcal{M}_f(t) = |f(t)| + \mathcal{J}_f(t).
\]
Since $\mathcal{J}_f(t)$ is nonnegative (as established in Lemma~\ref{lem:functions_well_posed}.(i)), we obtain the simple but crucial estimate
\begin{equation}
\label{eq:pointwise_comparison}
\mathcal{M}_f(t) \ge |f(t)| \quad \text{for every } t\in I.
\end{equation}

Taking the supremum over $t\in I$ on both sides of \eqref{eq:pointwise_comparison} preserves the inequality, yielding
\[
\sup_{t\in I} \mathcal{M}_f(t) \ge \sup_{t\in I} |f(t)|.
\]
By Definition~\ref{def:adaptive_memory_functional}, the left-hand side is $S_{\kappa,\Lambda}(f)$ and the right-hand side is the classical supremum norm $\|f\|_{\infty}$. Hence
\[
S_{\kappa,\Lambda}(f) \ge \|f\|_{\infty} \quad \text{for all } f\in\mathcal{C}(I),
\]
which is precisely the inequality \eqref{eq:functional_comparison}.
\qed
\end{proof}

\noindent
The inequality established in Lemma~\ref{lem:adaptive_functional_properties}.(iv) is sharp in the sense that for non-zero functions it can be strengthened to a strict inequality, provided the supremum of $|f|$ is attained at a point where the subsequent estimate applies. The following theorem makes this precise.

\begin{theorem}[Strict comparison when the maximizer lies in $\boldsymbol{(0,T]}$]
\label{thm:strict_functional_comparison}
Let $f \in \mathcal{C}(I)$ be a non-zero function such that the maximum of $|f|$ is attained at some point $t^* \in (0,T]$, i.e., there exists $t^* \in (0,T]$ with
\begin{equation}
\label{eq:attainment_interior}
|f(t^*)| = \|f\|_{\infty} = \max_{t\in I} |f(t)|.
\end{equation}
Then the inequality established in Lemma~\ref{lem:adaptive_functional_properties}.(iv) is strict, namely
\begin{equation}
\label{eq:strict_functional_comparison}
S_{\kappa,\Lambda}(f) > \|f\|_{\infty}.
\end{equation}
\end{theorem}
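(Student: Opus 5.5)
The plan is to evaluate $\mathcal{M}_f$ at the maximizer $t^*$ itself. The definition of $S_{\kappa,\Lambda}$ as a supremum gives
\[
S_{\kappa,\Lambda}(f)\ge \mathcal{M}_f(t^*)=|f(t^*)|+\mathcal{J}_f(t^*)=\|f\|_\infty+\mathcal{J}_f(t^*).
\]
It therefore suffices to prove $\mathcal{J}_f(t^*)>0$. This is the only place where the hypothesis $t^*\in(0,T]$ enters: if $t^*=0$, the integral $\mathcal{J}_f(0)$ is over a degenerate interval and vanishes.

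To show $\mathcal{J}_f(t^*)>0$, I will argue as follows.

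\textbf{Step 1: positivity of $|f|$ near $t^*$.} Since $f\not\equiv0$, we have $|f(t^*)|=\|f\|_\infty>0$. By continuity of $f$, there is $\delta\in(0,t^*)$ such that $|f(s)|\ge \tfrac12\|f\|_\infty$ for all $s\in[t^*-\delta,t^*]$. This interval lies in $[0,t^*]$ and has positive length because $t^*>0$.

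\textbf{Step 2: lower bounds on the weights.} Condition (AS1) gives $\Lambda(s,f(s))\ge\lambda_{\min}>0$ for every $s$. Condition (R1) gives $\kappa(t^*-s)\ge m_\kappa>0$ for every $s\in[0,t^*]$.

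\textbf{Step 3: the integral estimate.} The integrand of $\mathcal{J}_f(t^*)$ is nonnegative on $[0,t^*]$. Dropping the part of the integral over $[0,t^*-\delta]$ and using Steps 1 and 2 on the remaining interval yields
\[
\mathcal{J}_f(t^*)\ge\int_{t^*-\delta}^{t^*}\Lambda(s,f(s))\,\kappa(t^*-s)\,|f(s)|\,\mathrm{d}s\ \ge\ \lambda_{\min}\,m_\kappa\,\tfrac12\|f\|_\infty\,\delta>0.
\]
Measurability of the integrand, needed to split the integral, is already supplied by Lemma~\ref{lem:functions_well_posed}.(ii).

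Combining Step 3 with the first display gives $S_{\kappa,\Lambda}(f)\ge\|f\|_\infty+\tfrac12\lambda_{\min}m_\kappa\delta\|f\|_\infty>\|f\|_\infty$, which is the strict inequality claimed in the statement.

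The argument is essentially routine. The only point needing care is Step 1: we must make sure the interval on which $|f|$ stays bounded below has positive length and lies to the left of $t^*$, where the memory integral actually sees it. This is exactly why the maximizer must lie in $(0,T]$ rather than at $t^*=0$. As an alternative to the continuity argument, one could use directly that $|f|$ is continuous, nonnegative and positive at $t^*$, so that $\int_0^{t^*}|f|>0$. The explicit $\delta$-interval version is cleaner and also gives a quantitative gap.
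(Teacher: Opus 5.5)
Your proof is correct and follows the paper's argument essentially verbatim: evaluate $\mathcal{M}_f$ at $t^*$, and use continuity to get $|f|\ge\tfrac12\|f\|_\infty$ on a left neighborhood $[t^*-\delta,t^*]\subset[0,t^*]$. Then bound $\mathcal{J}_f(t^*)$ below by $\lambda_{\min}m_\kappa\cdot\tfrac12\|f\|_\infty$ times the interval length, using (AS1) and (R1). The only cosmetic difference is that the paper integrates over $[t^*-\delta/2,t^*]$ to pass from the open continuity neighborhood to a closed interval, whereas you simply shrink $\delta$; both work.
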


\begin{proof}
Since $f$ is non-zero, we have $\|f\|_{\infty} > 0$, and consequently $|f(t^*)| > 0$, which implies $f(t^*) \neq 0$.

By the continuity of $f$ at $t^*$, for $\varepsilon = \dfrac{|f(t^*)|}{2} > 0$, there exists $\delta_0 > 0$ such that for all $s\in I$ with $|s - t^*| < \delta_0$, we have
\[
|f(s) - f(t^*)| < \frac{|f(t^*)|}{2}.
\]
Since $t^* > 0$ by hypothesis, we may choose $\delta$ sufficiently small such that $0 < \delta < \min\{\delta_0, t^*\}$. Then for any $s \in (t^*-\delta, t^*)$ (note that $(t^*-\delta, t^*) \subset (t^*-\delta_0, t^*+\delta_0)$), the reverse triangle inequality yields
\begin{equation}
\label{eq:f_lower_bound}
|f(s)| \ge |f(t^*)| - |f(s) - f(t^*)| > |f(t^*)| - \frac{|f(t^*)|}{2} = \frac{|f(t^*)|}{2} > 0.
\end{equation}
Moreover, the interval $[t^*-\delta/2, t^*]$ is contained in $(t^*-\delta, t^*)$ and has positive length $\delta/2 > 0$. Consequently, the estimate $|f(s)| \ge |f(t^*)|/2$ holds for all $s \in [t^*-\delta/2, t^*]$.

We now estimate the integrand defining $\mathcal{J}_f(t^*)$ on the interval $[t^*-\delta/2, t^*]$. From condition (AS1) of Definition~\ref{def:adaptive_sensitivity}, we have the uniform lower bound $\Lambda(s,f(s)) \ge \lambda_{\min} > 0$ for all $s\in I$. For the kernel $\kappa$, condition (R1) of Definition~\ref{def:regular_admissible_kernel} provides the global lower bound $\kappa(\tau) \ge m_\kappa > 0$ for all $\tau\in I$. Since for any $s\in [t^*-\delta/2, t^*]$ we have $t^*-s \in [0, t^*] \subset I$, it follows that $\kappa(t^*-s) \ge m_\kappa > 0$.

Combining these estimates, we obtain for all $s$ in the interval $[t^*-\delta/2, t^*]$ the pointwise lower bound
\[
\Lambda(s,f(s))\kappa(t^*-s)|f(s)| \ge \lambda_{\min}\, m_\kappa\, \frac{|f(t^*)|}{2} > 0.
\]

Now we estimate $\mathcal{J}_f(t^*)$ from below. Restricting the integration domain to $[t^*-\delta/2, t^*]$ and using the pointwise lower bound, we obtain
\[
\mathcal{J}_f(t^*) \ge \int_{t^*-\delta/2}^{t^*} \lambda_{\min}\, m_\kappa\, \frac{|f(t^*)|}{2}\,ds = \lambda_{\min}\, m_\kappa\, \frac{|f(t^*)|}{2} \cdot \frac{\delta}{2} > 0.
\]

With this strict positivity established, we can now compare $\mathcal{M}_f(t^*)$ with $\|f\|_{\infty}$. Using \eqref{eq:attainment_interior} and the fact that $\mathcal{J}_f(t^*) > 0$, we obtain
\[
\mathcal{M}_f(t^*) = |f(t^*)| + \mathcal{J}_f(t^*) > |f(t^*)| = \|f\|_{\infty}.
\]

Finally, recalling from Definition~\ref{def:adaptive_memory_functional} that $S_{\kappa,\Lambda}(f)$ is the supremum of $\mathcal{M}_f(t)$ over $t\in I$, we have
\[
S_{\kappa,\Lambda}(f) = \sup_{t\in I} \mathcal{M}_f(t) \ge \mathcal{M}_f(t^*) > \|f\|_{\infty},
\]
which establishes the desired strict inequality \eqref{eq:strict_functional_comparison}. \qed
\end{proof}

\begin{remark}[Theoretical significance of the adaptive memory-dependent functional]
\label{rmk:functional_significance}
The functional $S_{\kappa,\Lambda}(f)$ introduced in Definition~\ref{def:adaptive_memory_functional} provides a quantitative framework for analyzing functions in the context of adaptive memory. Its construction incorporates two complementary mechanisms: the kernel $\kappa$ encodes the objective variation of memory influence with elapsed time, while the adaptive sensitivity function $\Lambda$ modulates this influence according to the state values $f(s)$ encountered along the trajectory. The composite weight $\Lambda(s,f(s))\kappa(t-s)$ thus captures a nuanced form of memory, wherein the weight depends both on how much time has passed since the event and on the amplitude of the state at the time of the event.

The use of the supremum operation $\sup_{t\in I}$ in the definition ensures that the functional reflects the maximal combined impact that may occur over the entire time interval. The inequality $|f|{\infty} \le S{\kappa,\Lambda}(f)$ (Lemma~\ref{lem:adaptive_functional_properties}.(iv)) shows that the adaptive memory-dependent functional is no weaker than the classical supremum norm; this fundamental comparison guarantees that the new functional does not underestimate the classical magnitude of a function, reflecting its basic compatibility, while also laying the groundwork for further analysis of the conditions under which equality holds in this inequality and when it becomes strict. The basic properties established in Lemma~\ref{lem:adaptive_functional_properties}—existence, controlled boundedness, positive definiteness—further confirm that $S_{\kappa,\Lambda}(\cdot)$ is a well-defined functional on $\mathcal{C}(I)$. Moreover, Theorem~\ref{thm:strict_functional_comparison} demonstrates that when the maximum of $|f|$ is attained in the interior of the interval, the adaptive memory-dependent functional is strictly greater than the classical supremum norm, reflecting the additional strictness contributed by the memory component.

The construction proceeds in a layered and sequential manner: first, the auxiliary functions $\mathcal{J}_f$ and $\mathcal{M}_f$ are introduced and their well-posedness established (Lemma~\ref{lem:functions_well_posed}); second, the functional is defined via a supremum operation (Definition~\ref{def:adaptive_memory_functional}); finally, its core properties are systematically verified (Lemma~\ref{lem:adaptive_functional_properties}) and a refined comparison result is obtained (Theorem~\ref{thm:strict_functional_comparison}).

Returning to the concrete contexts discussed in Remark~\ref{rmk:function_terminology}, the functional $S_{\kappa,\Lambda}(f)$ quantifies, for instance, the maximal combined impact of present and past neural activity in a single scalar value—thereby capturing nonlinear adaptive phenomena such as the reduced responsiveness after sustained stimulation (habituation) or the heightened sensitivity following a salient event. This unifying perspective illustrates how the abstract mathematical construction translates into a practical tool for analyzing state-dependent, history-sensitive adaptive behavior across diverse applications.
\end{remark}

\subsection{Construction of the Adaptive Memory-Dependent Sensitivity Set}
\label{subsec:adaptive_memory_set}

Having established the adaptive memory-dependent functional $S_{\kappa,\Lambda}(f)$ and its fundamental properties in the preceding subsection (see Definition~\ref{def:adaptive_memory_functional} and Lemma~\ref{lem:adaptive_functional_properties}), we now introduce the collection of functions that will serve as the primary setting for subsequent analysis. This collection is defined as the set of all functions on $I$ for which $S_{\kappa,\Lambda}(f)$ is finite.

\begin{remark}[Extension of the functional to broader function classes]
\label{rmk:functional_extension}
The adaptive memory-dependent functional $S_{\kappa,\Lambda}(f)$ was introduced in Definition~\ref{def:adaptive_memory_functional} for continuous functions $f \in \mathcal{C}(I)$. For a broader class of functions, the same expression
\[
S_{\kappa,\Lambda}(f) := \sup_{t \in I} \left( |f(t)| + \int_0^t \Lambda(s, f(s)) \, \kappa(t-s) \, |f(s)| \, ds \right)
\]
remains meaningful provided the integral exists as a Lebesgue integral. In particular, if $f$ is bounded and measurable, the composition $\Lambda(s, f(s))$ is measurable by the Carathéodory property of $\Lambda$, and the integral is well-defined. In the following, we consider functions for which $S_{\kappa,\Lambda}(f)$ is well-defined and finite.
\end{remark}

\begin{definition}[Adaptive memory-dependent sensitivity set]
\label{def:adaptive_memory_set}
Let $\kappa \in \mathscr{K}_{\mathrm{reg}}$ be a regular admissible kernel (Definition~\ref{def:regular_admissible_kernel}) and let $\Lambda \in \mathscr{A}(I)$ be an adaptive sensitivity function (Definition~\ref{def:adaptive_sensitivity}). The \textbf{adaptive memory-dependent sensitivity set} is defined as
\begin{equation}
\label{eq:adaptive_memory_set}
\mathscr{M}_{\kappa,\Lambda}(I) := \bigl\{ f: I \to \mathbb{R} \;\big|\; S_{\kappa,\Lambda}(f) < \infty \bigr\},
\end{equation}
where $S_{\kappa,\Lambda}(f)$ is understood in the sense of Remark~\ref{rmk:functional_extension}.
\end{definition}

\begin{remark}[On the set-theoretic characterization of $\mathscr{M}_{\kappa,\Lambda}(I)$]
\label{rmk:set_characterization}
Definition~\ref{def:adaptive_memory_set} does not impose any a priori regularity conditions on functions; membership is solely governed by the finiteness of $S_{\kappa,\Lambda}(f)$. As will be shown in Theorem~\ref{thm:embedding_C_into_M}, every continuous function on $I$ has finite $S_{\kappa,\Lambda}(f)$ value, so $\mathcal{C}(I) \subset \mathscr{M}_{\kappa,\Lambda}(I)$. Moreover, Proposition~\ref{prop:discontinuous_bounded_inclusion} demonstrates that the set also contains certain discontinuous functions, such as the indicator function of a subinterval. Consequently, the set $\mathscr{M}_{\kappa,\Lambda}(I)$ is strictly larger than $\mathcal{C}(I)$.
\end{remark}

\begin{theorem}[Embedding of $\boldsymbol{\mathcal{C}(I)}$ into the adaptive memory-dependent sensitivity set $\boldsymbol{\mathscr{M}_{\kappa,\Lambda}(I)}$ and basic estimates]
\label{thm:embedding_C_into_M}
Let $\kappa \in \mathscr{K}_{\mathrm{reg}}$ and $\Lambda \in \mathscr{A}(I)$. Then the adaptive memory-dependent sensitivity set $\mathscr{M}_{\kappa,\Lambda}(I)$ satisfies the following properties:
\begin{enumerate}
    \item \textbf{Set-theoretic inclusion:} $\mathcal{C}(I) \subset \mathscr{M}_{\kappa,\Lambda}(I)$; in particular, $\mathscr{M}_{\kappa,\Lambda}(I)$ is nonempty.
    
    \item \textbf{Comparison with the classical supremum norm and controlled boundedness:} For every $f \in \mathcal{C}(I)$,
    \begin{equation}
    \label{eq:functional_embedding_estimate}
    \|f\|_{\infty} \le S_{\kappa,\Lambda}(f) \le \bigl(1 + \Lambda_{\infty} \kappa_{\infty} T\bigr) \|f\|_{\infty},
    \end{equation}
    where $\Lambda_{\infty} := \sup_{(s,x)\in I\times\mathbb{R}} \Lambda(s,x)$ and $\kappa_{\infty} := \sup_{\tau\in I} \kappa(\tau)$ are the uniform bounds introduced in Lemma~\ref{lem:functions_well_posed} (see \eqref{eq:Lambda_sup_def} and \eqref{eq:kappa_sup_def}).
\end{enumerate}
\end{theorem}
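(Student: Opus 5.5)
The theorem is essentially a repackaging of Lemma~\ref{lem:functions_well_posed} and Lemma~\ref{lem:adaptive_functional_properties}, so the plan is to reduce both claims to those results. The only real work is to check that the value of $S_{\kappa,\Lambda}(f)$ in the sense of Remark~\ref{rmk:functional_extension} coincides with the value given by Definition~\ref{def:adaptive_memory_functional} when $f\in\mathcal{C}(I)$.

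For the inclusion, fix $f\in\mathcal{C}(I)$. First I will observe that for continuous $f$ the integral $\int_0^t\Lambda(s,f(s))\kappa(t-s)|f(s)|\,ds$ exists as a Lebesgue integral. This is the content of Lemma~\ref{lem:functions_well_posed}(i), which gives absolute convergence, together with the measurability argument in part (ii) via the Carathéodory property. Hence the extended expression in Remark~\ref{rmk:functional_extension} is meaningful and equals $\sup_{t\in I}\mathcal{M}_f(t)$, which is exactly Definition~\ref{def:adaptive_memory_functional}. Lemma~\ref{lem:adaptive_functional_properties}(i)--(ii) then gives $S_{\kappa,\Lambda}(f)\le(1+\Lambda_\infty\kappa_\infty T)\|f\|_\infty<\infty$, where finiteness uses (AS1), (R1) and the fact that $\|f\|_\infty<\infty$ on the compact interval $I$. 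So $f\in\mathscr{M}_{\kappa,\Lambda}(I)$. Nonemptiness follows by exhibiting $f\equiv 0$, or any constant, which lies in $\mathcal{C}(I)$; Lemma~\ref{lem:adaptive_functional_properties}(iii) even gives $S_{\kappa,\Lambda}(0)=0$.

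For the two-sided estimate \eqref{eq:functional_embedding_estimate}, the lower bound is Lemma~\ref{lem:adaptive_functional_properties}(iv). That result rests on the pointwise bound $\mathcal{M}_f(t)\ge|f(t)|$, which holds because $\mathcal{J}_f\ge0$, followed by taking suprema. The upper bound is Lemma~\ref{lem:adaptive_functional_properties}(ii), which comes from the pointwise estimate $\mathcal{J}_f(t)\le\Lambda_\infty\kappa_\infty T\|f\|_\infty$ in Lemma~\ref{lem:functions_well_posed}(iii). I will simply chain the two.

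The main obstacle, such as it is, is the consistency check between the two notions of $S_{\kappa,\Lambda}$: for $\mathcal{C}(I)$ one must be sure that the extended functional does not assign a different value or become undefined. I will address this by noting that both definitions use the same integrand. Measurability of $s\mapsto\Lambda(s,f(s))$ follows from the Carathéodory argument already given in Lemma~\ref{lem:functions_well_posed}(ii)(a), so the Lebesgue integral is the same object in both cases. Beyond this, no new estimates are needed.
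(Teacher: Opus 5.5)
Your proposal is correct and matches the paper's proof: inclusion and finiteness come from the upper bound in Lemma~\ref{lem:adaptive_functional_properties}(ii), nonemptiness from the zero function, and the two-sided estimate from chaining Lemma~\ref{lem:adaptive_functional_properties}(iv) with (ii). Your explicit check that the extended functional of Remark~\ref{rmk:functional_extension} agrees with Definition~\ref{def:adaptive_memory_functional} on $\mathcal{C}(I)$ is a slightly more careful version of the paper's one-line remark ``since $f$ is continuous, $S_{\kappa,\Lambda}(f)$ is well-defined''; it is not a different argument.
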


\begin{proof}
We establish each claim using the results of Lemma~\ref{lem:adaptive_functional_properties} and the uniform bounds $\Lambda_{\infty}$, $\kappa_{\infty}$ established therein.

\noindent\textbf{Proof of (i): set-theoretic inclusion $\mathcal{C}(I) \subset \mathscr{M}_{\kappa,\Lambda}(I)$.}
Let $f \in \mathcal{C}(I)$ be arbitrary. Since $I = [0,T]$ is compact, $f$ is bounded; consequently, its supremum norm $\|f\|_{\infty} = \sup_{t\in I} |f(t)|$ is finite. By Lemma~\ref{lem:adaptive_functional_properties}.(ii), we have the estimate
\begin{equation}
\label{eq:S_finite_estimate}
S_{\kappa,\Lambda}(f) \le \bigl(1 + \Lambda_{\infty} \kappa_{\infty} T\bigr) \|f\|_{\infty}.
\end{equation}
The right-hand side is finite because:
\begin{itemize}
    \item $\Lambda_{\infty} \le \lambda_{\mathrm{max}} < \infty$ by condition (AS1) of Definition~\ref{def:adaptive_sensitivity};
    \item $\kappa_{\infty} \le M_\kappa < \infty$ by condition (R1) of Definition~\ref{def:regular_admissible_kernel};
    \item $T < \infty$ is the fixed time horizon;
    \item $\|f\|_{\infty} < \infty$ as noted above.
\end{itemize}
Thus $S_{\kappa,\Lambda}(f) < \infty$, and since $f$ is continuous, $S_{\kappa,\Lambda}(f)$ is well-defined. Hence $f \in \mathscr{M}_{\kappa,\Lambda}(I)$. Since $f$ was arbitrary, we conclude $\mathcal{C}(I) \subset \mathscr{M}_{\kappa,\Lambda}(I)$.

Nonemptiness follows immediately: the zero function $\mathbf{0}(t) \equiv 0$ belongs to $\mathcal{C}(I)$, hence also to $\mathscr{M}_{\kappa,\Lambda}(I)$. Therefore $\mathscr{M}_{\kappa,\Lambda}(I) \neq \emptyset$.

\noindent\textbf{Proof of (ii): comparison with the classical supremum norm and controlled boundedness.}
The left-hand inequality $\|f\|_{\infty} \le S_{\kappa,\Lambda}(f)$ is precisely Lemma~\ref{lem:adaptive_functional_properties}.(iv), which holds for every $f \in \mathcal{C}(I)$. The right-hand inequality is exactly \eqref{eq:S_finite_estimate} derived above. Combining these yields the two-sided estimate \eqref{eq:functional_embedding_estimate}.
\end{proof}

\begin{proposition}[Controlled estimate and continuity of the inclusion mapping into $\boldsymbol{\mathscr{M}_{\kappa,\Lambda}(I)}$]
\label{prop:inclusion_estimate}
Let $\kappa \in \mathscr{K}_{\mathrm{reg}}$ and $\Lambda \in \mathscr{A}(I)$. The identity mapping
\[
\iota: \mathcal{C}(I) \to \mathscr{M}_{\kappa,\Lambda}(I), \qquad \iota(f) = f,
\]
satisfies the following properties, where $\mathcal{C}(I)$ is equipped with the supremum norm $\|\cdot\|_{\infty}$ and $\mathscr{M}_{\kappa,\Lambda}(I)$ is equipped with the functional $S_{\kappa,\Lambda}$:
\begin{enumerate}
    \item[(i)] \textbf{Linearity:} $\iota$ is a linear mapping.
    
    \item[(ii)] \textbf{Boundedness:} There exists a constant $C = 1 + \Lambda_{\infty} \kappa_{\infty} T$ such that
    \begin{equation}
    \label{eq:inclusion_boundedness}
    S_{\kappa,\Lambda}(\iota(f)) \le C \|f\|_{\infty} \quad \text{for all } f \in \mathcal{C}(I).
    \end{equation}
    
   \item[(iii)] \textbf{Continuity:} $\iota$ is uniformly continuous (in fact, Lipschitz) when $\mathcal{C}(I)$ is equipped with the supremum norm, since for any $f,g \in \mathcal{C}(I)$,
\begin{equation}
\label{eq:lipschitz_estimate}
S_{\kappa,\Lambda}(\iota(f) - \iota(g)) \le C \|f - g\|_{\infty}.
\end{equation}
\end{enumerate}
\end{proposition}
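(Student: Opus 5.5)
The three claims will be checked in order, using only Theorem~\ref{thm:embedding_C_into_M} and Lemma~\ref{lem:adaptive_functional_properties}. Throughout, $\iota$ is understood as the identity on underlying functions, viewed as a map between sets of functions with the pointwise vector operations.

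\textbf{Linearity.} For $f,g\in\mathcal{C}(I)$ and $a,b\in\mathbb{R}$ we have $\iota(af+bg)=af+bg=a\,\iota(f)+b\,\iota(g)$ pointwise. The only thing to check is that the right-hand side lies in $\mathscr{M}_{\kappa,\Lambda}(I)$. This holds because $af+bg\in\mathcal{C}(I)$, and Theorem~\ref{thm:embedding_C_into_M}(i) gives $\mathcal{C}(I)\subset\mathscr{M}_{\kappa,\Lambda}(I)$. Note that $\mathscr{M}_{\kappa,\Lambda}(I)$ need not be a vector space in general. However, linearity here is only an identity between functions inside the linear subspace $\mathcal{C}(I)$, so no closedness of $\mathscr{M}_{\kappa,\Lambda}(I)$ under addition is required.

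\textbf{Boundedness.} Since $\iota(f)=f$, inequality \eqref{eq:inclusion_boundedness} is exactly the right-hand inequality of \eqref{eq:functional_embedding_estimate}, or equivalently Lemma~\ref{lem:adaptive_functional_properties}(ii). This holds with $C=1+\Lambda_\infty\kappa_\infty T$, and this $C$ is finite by (AS1) and (R1).

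\textbf{Lipschitz continuity.} This is the step needing care, because $S_{\kappa,\Lambda}$ is nonlinear: $\Lambda$ depends on the state. So I will not try to bound $S_{\kappa,\Lambda}(f)-S_{\kappa,\Lambda}(g)$. The statement \eqref{eq:lipschitz_estimate} instead concerns $S_{\kappa,\Lambda}(\iota(f)-\iota(g))=S_{\kappa,\Lambda}(f-g)$, evaluated at the single continuous function $h:=f-g\in\mathcal{C}(I)$. Applying the boundedness estimate to $h$ gives
\[
S_{\kappa,\Lambda}(f-g)\le(1+\Lambda_\infty\kappa_\infty T)\|f-g\|_\infty .
\]
The constant is uniform because $\Lambda_\infty$ is a supremum over all $(s,x)$. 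Hence it does not depend on which state values $h(s)$ enter $\Lambda(s,h(s))$.

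It remains to deduce continuity in the stated sense. Given $\varepsilon>0$, choose $\delta=\varepsilon/C$. Then $\|f-g\|_\infty<\delta$ implies $S_{\kappa,\Lambda}(f-g)<\varepsilon$, with $\delta$ independent of $f$ and $g$. This gives uniform continuity with respect to the ``distance'' $(f,g)\mapsto S_{\kappa,\Lambda}(f-g)$ on the target side.

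I will add a remark on what is being measured. $S_{\kappa,\Lambda}(f-g)$ is not obviously a metric, since homogeneity and the triangle inequality may fail for state-dependent $\Lambda$. The continuity claim is therefore made precisely in the form \eqref{eq:lipschitz_estimate}. The only potential obstacle is this interpretive point, not any estimate.
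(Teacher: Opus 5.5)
Your proposal is correct and follows essentially the same route as the paper: linearity of the identity map, boundedness read off directly from the embedding theorem's upper estimate, and the Lipschitz bound obtained by applying that estimate to the single continuous function $f-g$, with $\delta=\varepsilon/C$. Your added caveats are accurate and go slightly beyond what the paper states: $S_{\kappa,\Lambda}$ is in general neither homogeneous nor subadditive, so continuity holds only in the sense of the displayed estimate, and linearity needs only $\mathcal{C}(I)\subset\mathscr{M}_{\kappa,\Lambda}(I)$.
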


\begin{proof}
We establish each property in turn.

\noindent\textbf{Proof of (i): linearity.}
For any $f, g \in \mathcal{C}(I)$ and any scalars $\alpha, \beta \in \mathbb{R}$,
\[
\iota(\alpha f + \beta g) = \alpha f + \beta g = \alpha \iota(f) + \beta \iota(g),
\]
since $\iota$ is defined as the identity mapping. Hence $\iota$ is linear.

\noindent\textbf{Proof of (ii): boundedness.}
Let $C := 1 + \Lambda_{\infty} \kappa_{\infty} T$. For any $f \in \mathcal{C}(I)$, Theorem~\ref{thm:embedding_C_into_M}.(ii) provides the estimate
\[
S_{\kappa,\Lambda}(f) \le C \|f\|_{\infty}.
\]
Since $S_{\kappa,\Lambda}(\iota(f)) = S_{\kappa,\Lambda}(f)$ by definition of $\iota$, the inequality \eqref{eq:inclusion_boundedness} follows immediately.

\noindent\textbf{Proof of (iii): continuity.}
We first note that $\iota$ is linear by part (i). Consequently, for any $f, g \in \mathcal{C}(I)$,
\[
\iota(f) - \iota(g) = \iota(f - g).
\]

Now apply the boundedness estimate \eqref{eq:inclusion_boundedness} to the function $f - g \in \mathcal{C}(I)$. This yields
\[
S_{\kappa,\Lambda}(\iota(f - g)) \le C \|f - g\|_{\infty}.
\]

Combining the two equalities, we obtain
\[
S_{\kappa,\Lambda}(\iota(f) - \iota(g)) = S_{\kappa,\Lambda}(\iota(f - g)) \le C \|f - g\|_{\infty},
\]
which is precisely \eqref{eq:lipschitz_estimate}. This inequality shows that $\iota$ is Lipschitz continuous with Lipschitz constant $C$. In particular, for any $\varepsilon > 0$, choosing $\delta = \varepsilon / C$ gives
\[
S_{\kappa,\Lambda}(\iota(f) - \iota(g)) < \varepsilon \quad \text{whenever} \quad \|f - g\|_{\infty} < \delta,
\]
which is the definition of uniform continuity. Hence $\iota$ is uniformly continuous.
\end{proof}

\begin{proposition}[A bounded discontinuous function belongs to $\boldsymbol{\mathscr{M}_{\kappa,\Lambda}(I)}$]
\label{prop:discontinuous_bounded_inclusion}
Let $\kappa \in \mathscr{K}_{\mathrm{reg}}$ and $\Lambda \in \mathscr{A}(I)$ be arbitrary. For any fixed $t_\star \in (0,T)$, define
\begin{equation}
\label{eq:step_function}
f_{t_\star}(t) := \mathbf{1}_{[0,t_\star]}(t) = \begin{cases}
1, & 0 \le t \le t_\star,\\[2pt]
0, & t_\star < t \le T.
\end{cases}
\end{equation}
Then $f_{t_\star}$ is discontinuous at $t = t_\star$ (hence $f_{t_\star} \notin \mathcal{C}(I)$), yet $f_{t_\star} \in \mathscr{M}_{\kappa,\Lambda}(I)$; i.e., $S_{\kappa,\Lambda}(f_{t_\star})$ is well-defined and finite.
\end{proposition}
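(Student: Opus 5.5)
The argument has three parts, in order: discontinuity at $t_\star$, well-definedness of every integral in $S_{\kappa,\Lambda}(f_{t_\star})$ in the sense of Remark~\ref{rmk:functional_extension}, and finiteness via an explicit bound.

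\textbf{Step 1: discontinuity.} Since $t_\star\in(0,T)$, points $t_n\downarrow t_\star$ with $t_n\in(t_\star,T]$ exist. Along them $f_{t_\star}(t_n)=0$, whereas $f_{t_\star}(t_\star)=1$. Hence $f_{t_\star}$ is not continuous at $t_\star$, and $f_{t_\star}\notin\mathcal{C}(I)$.

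\textbf{Step 2: measurability.} $f_{t_\star}$ is the indicator of a closed interval, so it is Borel measurable and bounded with $\|f_{t_\star}\|_\infty=1$. The map $s\mapsto\Lambda(s,f_{t_\star}(s))$ needs care, because the Carathéodory argument of Lemma~\ref{lem:functions_well_posed}(ii) used continuity of $f$. I would avoid that machinery by writing it explicitly:
\[
\Lambda(s,f_{t_\star}(s))=\Lambda(s,1)\mathbf{1}_{[0,t_\star]}(s)+\Lambda(s,0)\mathbf{1}_{(t_\star,T]}(s).
\]
By (AS3), $s\mapsto\Lambda(s,1)$ and $s\mapsto\Lambda(s,0)$ are measurable, so this combination is measurable. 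Since $\kappa$ is continuous by (R3), for each $t$ the integrand $s\mapsto\Lambda(s,f_{t_\star}(s))\kappa(t-s)|f_{t_\star}(s)|$ is a nonnegative measurable function on $[0,t]$. It is bounded by $\lambda_{\max}M_\kappa$, using (AS1) and (R1), so the Lebesgue integral exists and is finite.

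\textbf{Step 3: finiteness.} For every $t\in I$,
\[
|f_{t_\star}(t)|+\int_0^t\Lambda(s,f_{t_\star}(s))\kappa(t-s)|f_{t_\star}(s)|\,ds\le 1+\Lambda_\infty\int_0^{\min\{t,t_\star\}}\kappa(t-s)\,ds.
\]
By Lemma~\ref{lem:kernel_integral_estimates}(1) the remaining integral is at most $1$, or alternatively at most $\kappa_\infty T$. Taking the supremum over $t$ gives $S_{\kappa,\Lambda}(f_{t_\star})\le 1+\Lambda_\infty<\infty$, and so $f_{t_\star}\in\mathscr{M}_{\kappa,\Lambda}(I)$.

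\textbf{Optional refinement.} One could compute more precisely: for $t\le t_\star$ the expression is $1+\int_0^t\Lambda(s,1)\kappa(t-s)\,ds$, and for $t>t_\star$ it is $\int_0^{t_\star}\Lambda(s,1)\kappa(t-s)\,ds$. This is not needed for the claim.

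The only genuinely delicate point is the measurability in Step 2, since the earlier lemma relied on continuity of $f$. The explicit two-valued decomposition settles it using only (AS3).
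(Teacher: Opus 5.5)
Your proof is correct and follows the same outline as the paper (measurability of the integrand, pointwise domination, then the supremum), differing only locally: your explicit splitting $\Lambda(s,f_{t_\star}(s))=\Lambda(s,1)\mathbf{1}_{[0,t_\star]}(s)+\Lambda(s,0)\mathbf{1}_{(t_\star,T]}(s)$ replaces the paper's appeal to the Carath\'eodory property and needs only (AS3), and you also verify the discontinuity, which the paper merely asserts. For the bound, the paper uses $\kappa\le\kappa_\infty$ to get $S_{\kappa,\Lambda}(f_{t_\star})\le 1+\Lambda_\infty\kappa_\infty T$, whereas your use of normalization via Lemma~\ref{lem:kernel_integral_estimates}(1) gives the slightly sharper $1+\Lambda_\infty$, since $1=\int_0^T\kappa\le\kappa_\infty T$.
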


\begin{proof}
We first verify that $S_{\kappa,\Lambda}(f_{t_\star})$ is well-defined. The function $f_{t_\star}$ defined in \eqref{eq:step_function} is piecewise constant, hence Borel measurable. Since $\Lambda$ is a Carathéodory function (measurable in $s$ and continuous in $x$), the composition $\Lambda(s, f_{t_\star}(s))$ is Lebesgue measurable. Moreover, $f_{t_\star}$ is bounded by $1$, and $\kappa$ is bounded by $\kappa_{\infty} < \infty$. Consequently, the integral defining $\mathcal{J}_{f_{t_\star}}(t)$ exists as a Lebesgue integral for each $t \in I$, and $S_{\kappa,\Lambda}(f_{t_\star})$ is well-defined.

Now, by the definitions of $\mathcal{M}_f$ and $S_{\kappa,\Lambda}$ (see Definition~\ref{def:adaptive_basic_functions} and Definition~\ref{def:adaptive_memory_functional}, together with Remark~\ref{rmk:functional_extension} for the extension to broader function classes), for any $t \in I$ we have
\[
\mathcal{M}_{f_{t_\star}}(t) = |f_{t_\star}(t)| + \mathcal{J}_{f_{t_\star}}(t),
\]
where
\[
\mathcal{J}_{f_{t_\star}}(t) = \int_0^t \Lambda(s, f_{t_\star}(s)) \, \kappa(t-s) \, |f_{t_\star}(s)| \, ds.
\]

Observe that $|f_{t_\star}(s)| \le 1$ for all $s \in I$. By condition (AS1) of Definition~\ref{def:adaptive_sensitivity}, we have $\Lambda(s, f_{t_\star}(s)) \le \Lambda_{\infty} < \infty$ for all $s \in I$, where $\Lambda_{\infty} := \sup_{(s,x)\in I\times\mathbb{R}} \Lambda(s,x)$. By condition (R1) of Definition~\ref{def:regular_admissible_kernel}, we have $\kappa(t-s) \le \kappa_{\infty} < \infty$ for all $t,s \in I$ with $s \le t$, where $\kappa_{\infty} := \sup_{\tau \in I} \kappa(\tau)$. Consequently,
\begin{equation}
\label{eq:Jf_bound_discontinuous}
\mathcal{J}_{f_{t_\star}}(t) \le \Lambda_{\infty} \kappa_{\infty} \int_0^t 1 \, ds = \Lambda_{\infty} \kappa_{\infty} t \le \Lambda_{\infty} \kappa_{\infty} T.
\end{equation}

Since $|f_{t_\star}(t)| \le 1$ as well, we obtain from \eqref{eq:Jf_bound_discontinuous} that for every $t \in I$,
\[
\mathcal{M}_{f_{t_\star}}(t) = |f_{t_\star}(t)| + \mathcal{J}_{f_{t_\star}}(t) \le 1 + \Lambda_{\infty} \kappa_{\infty} T.
\]

Therefore,
\begin{equation}
\label{eq:S_bound_discontinuous}
S_{\kappa,\Lambda}(f_{t_\star}) = \sup_{t \in I} \mathcal{M}_{f_{t_\star}}(t) \le 1 + \Lambda_{\infty} \kappa_{\infty} T < \infty,
\end{equation}
which proves $f_{t_\star} \in \mathscr{M}_{\kappa,\Lambda}(I)$ by Definition~\ref{def:adaptive_memory_set}. \qed
\end{proof}

\begin{remark}[Mathematical significance of the adaptive memory-dependent sensitivity set and its structure]
\label{rmk:set_significance}
Theorem~\ref{thm:embedding_C_into_M} and Propositions~\ref{prop:inclusion_estimate} and \ref{prop:discontinuous_bounded_inclusion} collectively elucidate the nature of $\mathscr{M}_{\kappa,\Lambda}(I)$. Several aspects of this structure merit elaboration.

\noindent\textbf{Connection with continuous functions.}
For continuous functions, Lemma~\ref{lem:adaptive_functional_properties}.(iv) yields the inequality $\|f\|_{\infty} \le S_{\kappa,\Lambda}(f)$, showing that the adaptive memory-dependent functional dominates the classical supremum norm. Theorem~\ref{thm:embedding_C_into_M}.(ii) refines this observation into a two-sided estimate:
\[
\|f\|_{\infty} \le S_{\kappa,\Lambda}(f) \le (1 + \Lambda_{\infty} \kappa_{\infty} T) \|f\|_{\infty} \qquad \forall f \in \mathcal{C}(I).
\]
Thus every continuous function belongs to $\mathscr{M}_{\kappa,\Lambda}(I)$, i.e., $\mathcal{C}(I) \subset \mathscr{M}_{\kappa,\Lambda}(I)$, and on this subspace the functional $S_{\kappa,\Lambda}$ is equivalent to the classical supremum norm. The inclusion mapping $\iota: \mathcal{C}(I) \to \mathscr{M}_{\kappa,\Lambda}(I)$ (where $\mathcal{C}(I)$ is equipped with $\|\cdot\|_{\infty}$ and $\mathscr{M}_{\kappa,\Lambda}(I)$ is equipped with $S_{\kappa,\Lambda}$) is linear, bounded, and continuous; these properties are recorded in Proposition~\ref{prop:inclusion_estimate}.

\noindent\textbf{Enlargement beyond continuity.}
The set $\mathscr{M}_{\kappa,\Lambda}(I)$ is strictly larger than $\mathcal{C}(I)$. Proposition~\ref{prop:discontinuous_bounded_inclusion} provides an explicit illustration: the indicator function of a subinterval,
\[
f_{t_\star}(t) = \mathbf{1}_{[0,t_\star]}(t),
\]
is discontinuous at $t = t_\star$ (hence $f_{t_\star} \notin \mathcal{C}(I)$), yet satisfies $S_{\kappa,\Lambda}(f_{t_\star}) < \infty$, and therefore belongs to $\mathscr{M}_{\kappa,\Lambda}(I)$. This example shows that a function can belong to $\mathscr{M}_{\kappa,\Lambda}(I)$ without being continuous. This flexibility is particularly relevant for modeling nonlinear phenomena where signals exhibit jump discontinuities, such as abrupt environmental changes or on-off switching in biological and engineered systems.

\noindent\textbf{Quantitative comparison.}
The functional $S_{\kappa,\Lambda}(f)$ merges the instantaneous amplitude $|f(t)|$ with the adaptively weighted historical accumulation $\mathcal{J}_f(t)$, thereby capturing the entire evolution of $f$. Theorem~\ref{thm:strict_functional_comparison} reveals a refined quantitative property: when the maximum of $|f|$ is attained in the interior of the interval (i.e., at some $t^* \in (0,T]$), the adaptive memory-dependent functional strictly exceeds the classical supremum norm,
\[
S_{\kappa,\Lambda}(f) > \|f\|_{\infty},
\]
which reflects the influence of historical accumulation on the value of $S_{\kappa,\Lambda}(f)$.

\noindent\textbf{Methodological perspective.}
The construction of $\mathscr{M}_{\kappa,\Lambda}(I)$ follows a layered approach: starting from the kernel $\kappa$ and the sensitivity function $\Lambda$, we first introduced the auxiliary functions $\mathcal{J}_f$ and $\mathcal{M}_f$ and established their well-posedness (Lemma~\ref{lem:functions_well_posed}); we then defined the functional $S_{\kappa,\Lambda}$ via a supremum operation (Definition~\ref{def:adaptive_memory_functional}) and verified its basic properties (Lemma~\ref{lem:adaptive_functional_properties}); finally, we introduced the set $\mathscr{M}_{\kappa,\Lambda}(I)$ and examined its relationship with $\mathcal{C}(I)$ (Theorem~\ref{thm:embedding_C_into_M}, Propositions~\ref{prop:inclusion_estimate} and \ref{prop:discontinuous_bounded_inclusion}). This structured development offers a possible framework for investigating functions and functionals within the context of adaptive memory.

\noindent\textbf{Nonlinear significance.}
The set $\mathscr{M}_{\kappa,\Lambda}(I)$ is defined through $S_{\kappa,\Lambda}(f)$, whose construction fundamentally relies on the state-dependent sensitivity factor $\Lambda(s,f(s))$. This factor introduces a nonlinear coupling between the trajectory $f$ and its own history: the weight assigned to a past event depends not only on when it occurred (via $\kappa$) but also on the value of $f$ at that time (via $\Lambda$). As illustrated in Remark~\ref{rmk:function_terminology}, this nonlinearity enables the description of adaptive phenomena such as habituation (diminished sensitivity under sustained stimulation) and selective retention (enhanced memory of salient events). Thus $\mathscr{M}_{\kappa,\Lambda}(I)$ can be viewed as a mathematical construct designed for analyzing systems where memory is both time-dependent and state-dependent.
\end{remark}



\end{document}